\documentclass[12pt,a4paper]{article}
\usepackage{eurosym}
\usepackage[utf8]{inputenc}
\usepackage{amsmath,amssymb,amsthm,amsfonts}
\usepackage{mathtools}
\usepackage{bm}
\usepackage{hyperref}
\usepackage{graphicx}
\usepackage{listings}
\usepackage{xcolor}
\usepackage{float}

\definecolor{codegreen}{rgb}{0,0.6,0}
\definecolor{codegray}{rgb}{0.5,0.5,0.5}
\definecolor{codepurple}{rgb}{0.58,0,0.82}
\definecolor{backcolour}{rgb}{0.95,0.95,0.92}
\newtheorem{theorem}{Theorem}[section]

\newtheorem{conjecture}[theorem]{Conjecture}

\newtheorem{proposition}[theorem]{Proposition}
\newtheorem{corollary}[theorem]{Corollary}
\newtheorem{definition}[theorem]{Definition}
\newtheorem{remark}[theorem]{Remark}
\numberwithin{equation}{section}

\begin{document}

\title{The Triality of Radial Nonlinear Dynamics: Analysis of Riccati, Schr\"{o}dinger, and Hamilton--Jacobi--Bellman Equations}
\author{Dragos-Patru Covei \\
{\small The Bucharest University of Economic Studies} \\
{\small Department of Applied Mathematics} \\
{\small Piata Romana 1, 1st district} \\
{\small Postal Code: 010374, Romania} \\
{\small e-mail: \texttt{dragos.covei@csie.ase.ro}}}
\date{\today}
\maketitle

\begin{abstract}
This study develops a unified mathematical framework for the analysis of
radial differential equations, revealing a fundamental connection between
three distinct classes of problems: the nonlinear Riccati equation, the
linear Schr\"odinger equation, and the Hamilton--Jacobi--Bellman equation
for stochastic control. We establish the existence and uniqueness of regular
solutions on both bounded and unbounded domains, deriving sharp growth
rates and exact asymptotic plateaus through a general barrier theory.
A detailed sensitivity analysis of the noise intensity parameter identifies
the transition between deterministic and diffusion-dominated regimes via
singular perturbation methods. These theoretical results are reinforced by
numerical simulations that validate the predicted feedback laws, confirm
the convexity--concavity structure of the triality, and illustrate the
stability of the system. The resulting framework clarifies the duality
between global wave functions and local dynamical drifts, providing a
rigorous foundation for the study of multidimensional stochastic processes
under central potentials.
\end{abstract}

\medskip
\noindent\textbf{Keywords:} radial Riccati equation, stationary Schr\"{o}dinger
equation in central potentials, Hamilton--Jacobi--Bellman equation, radial
symmetry, stochastic optimal control, Cole--Hopf transformation, monotone
barrier method, asymptotic plateau, vanishing-viscosity / WKB limit,
singular perturbations, Radau IIA implicit Runge--Kutta scheme,
Merton-type credit spreads.

\medskip
\noindent\textbf{Mathematics Subject Classification (2020):}
34D05, 34L40, 34E20, 35J10, 35K10, 49L12, 49L20, 49L25, 60H10, 65L04,
65L06, 81Q05, 91G40, 93E20.

\section{Introduction \label{sec:intro}}

The study of nonlinear differential equations posed in radial domains
constitutes a foundational pillar of modern mathematical physics. Such
equations arise naturally whenever the underlying problem possesses spherical
symmetry: in the description of bound states of atoms and the analysis of
ground-state wave functions in central potentials \cite{berezin2012,
reedsimon1978, Kenichi2023}; in the modeling of radial diffusion processes
governed by the rotation-invariant Laplacian in $\mathbb{R}^{N}$
\cite{oksendal2003, fleming2006}; in stochastic optimal control problems
whose value functions inherit the symmetry of the cost \cite{yongzhou1999,
fleming2006}; in the qualitative theory of nonlinear elliptic and parabolic
equations on balls and on the whole space \cite{gilbarg, evans2010}; and in
financial mathematics through the structural credit-risk model of Merton
\cite{Boyle2002,Vasicek1977,merton1974, NEIL}. The present paper develops a unified, fully
rigorous theoretical and numerical framework that brings together
\emph{three} a priori distinct radial problems, intimately related but never
treated together in the literature, into a single mathematically equivalent
system: the nonlinear radial Riccati equation, the stationary
$N$-dimensional Schr\"{o}dinger equation in a central potential, and the
radial Hamilton--Jacobi--Bellman equation arising in stochastic control.

\medskip
\noindent\textbf{The radial triality.} The cornerstone of this work is what
we shall call \emph{the radial triality}. It is a precise one-to-one
correspondence, valid both on bounded domains $[0,R]$ and on the whole
half-line $[0,\infty)$, between regular solutions of the following three
equations:

\begin{enumerate}
\item The \textbf{radial Riccati equation}, which describes the local rate
of change of the geometric drift / feedback gain associated with a radially
symmetric control problem,
\begin{equation}
\phi ^{\prime }\left( r\right) =-r\,\phi(r)^{2}-
\frac{N}{r}\,\phi (r)+\frac{Q(r)}{\sigma ^{4}r},\qquad r>0,
\qquad \phi(0)=0.
\label{eq:ric_ode}
\end{equation}

\item The $N$-dimensional \textbf{stationary Schr\"{o}dinger equation} with
zero energy in the central potential $V(|x|)=Q(|x|)/\sigma^{4}$,
\begin{equation}
-\Delta u(x)+V(|x|)\,u(x)=0,\qquad x\in\mathbb{R}^{N},
\label{st_schrodinger}
\end{equation}
which, restricted to radially symmetric solutions $u(x)=u(|x|)$, becomes the
linear second-order ODE
\begin{equation}
u''(r)+\frac{N-1}{r}u'(r)-\frac{Q(r)}{\sigma^{4}}u(r)=0,\qquad r>0,\qquad
u(0)=1,\;u'(0)=0.
\label{eq:lin_ode_intro}
\end{equation}

\item The \textbf{radial Hamilton--Jacobi--Bellman equation} satisfied by
the optimal cost-to-go $z(r)$ of a stochastic optimal control problem in
which a controlled radial diffusion is driven through a noisy environment
under the running cost $Q(r)$,
\begin{equation}
z^{\prime \prime }(r)+\frac{N-1}{r}z^{\prime }(r)-\frac{1}{2\sigma ^{2}}
(z^{\prime }(r))^{2}+\frac{2Q(r)}{\sigma ^{2}}=0,\qquad r>0,
\qquad z(0)=z'(0)=0.
\label{eq:hjb_intro}
\end{equation}
\end{enumerate}

\noindent
The three equations are linked through the logarithmic-derivative
transformations
\begin{equation}
\phi(r)\;=\;\frac{1}{r}\frac{u'(r)}{u(r)},\qquad
z(r)\;=\;-2\sigma^{2}\ln u(r),\qquad
\alpha^{\ast}(r)\;=\;-z'(r)\;=\;2\sigma^{2}\,r\,\phi(r),
\label{eq:triality_intro_maps}
\end{equation}
which we shall prove constitute a globally invertible bijection between the
regular branches of \eqref{eq:ric_ode}, \eqref{eq:lin_ode_intro} and
\eqref{eq:hjb_intro}. As discussed in
Section~\ref{sec:radial_stationary_verification}, the prefactor of the
optimal feedback law $\alpha^{\ast}$ in
\eqref{eq:triality_intro_maps} corresponds to the natural normalization
$L(\alpha)=\tfrac{1}{2}|\alpha|^{2}+2Q$ of the running cost in the
associated stochastic control problem, which makes the Cole--Hopf
transformation $z=-2\sigma^{2}\ln u$ exact.

\subsection{Connections with radial quantum mechanics and physical
significance}\label{sec:intro-physics}

The triality framework is not merely a formal mathematical curiosity: it
provides a coherent physical narrative connecting three classical
viewpoints on radial dynamics.

\medskip
\noindent\textbf{Radial quantum mechanics.} Equation
\eqref{eq:lin_ode_intro} is exactly the radial reduction of the
\emph{stationary Schr\"{o}dinger equation} for a particle of mass $m$ in
a central potential, in the regime where the Planck constant has been
replaced by the diffusion intensity $\sigma^{2}$. Setting
$E=0$, $V(r)=Q(r)/\sigma^{4}$ and considering the radial part
$u(x)=u(|x|)$ of the wave function, the action of the Laplacian on
spherically symmetric functions
$\Delta u=u''(r)+\frac{N-1}{r}u'(r)$
combined with the Schr\"{o}dinger equation
$-\sigma^{4}\Delta u+V(r)u=Eu$ gives precisely
\eqref{eq:lin_ode_intro}. The boundary condition $u'(0)=0$ is the standard
regularity condition for the radial wave function: among the two Frobenius
branches at the regular singular point $r=0$, only the one with
exponent $\alpha=0$ produces a state of finite probability density at the
origin (see Reed--Simon \cite{reedsimon1978} and Berezin--Shubin
\cite{berezin2012}). This selection mechanism is intimately connected to
the formalism of \emph{supersymmetric quantum mechanics} \cite{cooper1995},
in which the logarithmic derivative $w=u'/u$ defines the so-called
superpotential and induces a factorization of the Hamiltonian.

\medskip
\noindent\textbf{Riccati equation as a local momentum.} The Riccati function
$\phi=u'/(ru)$ admits a striking physical interpretation: it is the
geometric counterpart, in the radial coordinate, of the
\emph{quantum momentum function} $p(r)=-i\hbar\,u'(r)/u(r)$ used in the
hydrodynamic formulation of quantum mechanics due to Madelung and
Bohm \cite{cooper1995}. More precisely, in our normalization,
$\sigma^{2}u'/u=-\tfrac{1}{2}z'$ plays the role of a local momentum
density, and $r\phi(r)=u'/u$ may be viewed as its radial component. Whereas
the wave function $u$ encodes the global state, $\phi$ encodes its
\emph{infinitesimal} rate of variation along radial rays. The Riccati
equation \eqref{eq:ric_ode} then expresses Newton's law for this local
momentum: its variation in $r$ is generated by the geometric centripetal
contribution $-N\phi/r$, the self-coupling $-r\phi^{2}$ characteristic of
nonlinear momentum transport, and the source term $Q/(\sigma^{4}r)$
representing the radial gradient of the central potential.

\medskip
\noindent\textbf{HJB equation as an optimal radial drift.} The function
$z(r)=-2\sigma^{2}\ln u(r)$ is, by the Cole--Hopf transformation
\cite{evans2010, fleming2006, yongzhou1999}, the optimal cost-to-go of a
stochastic control problem in which a particle moves in $\mathbb{R}^{N}$
under the action of a control $\alpha$, with dynamics
\[
dX_{t}=\alpha_{t}\,dt+\sqrt{2}\,\sigma\,dW_{t}
\]
and pays the running cost
$L(\alpha,X_{t})=\tfrac{1}{2}|\alpha_{t}|^{2}+2\,Q(|X_{t}|)$.
The dynamic programming principle \cite{fleming2006} yields the
Hamilton--Jacobi--Bellman equation, whose minimizer in $\alpha$ produces
the optimal feedback law $\alpha^{\ast}(x)=-\nabla\bar{z}(x)$, where
$\bar{z}(x)=z(|x|)$. Restricted to radial functions and using
\eqref{eq:triality_intro_maps}, this deterministic optimal drift is
precisely
\[
\alpha^{\ast}(x)\;=\;2\sigma^{2}\,\phi(|x|)\,x,\qquad x\in\mathbb{R}^{N}\setminus\{0\}.
\]
In other words, the radial Riccati equation determines, at every point in
space, the optimal direction and intensity of an \emph{optimal radial drift}
that minimizes the expected accumulated cost.

\medskip
\noindent\textbf{Central potentials and radial diffusion problems.}
The same triality framework also governs the long-time behavior of
\emph{radial diffusions with absorption} (Wick-rotated Schr\"{o}dinger
dynamics, see \cite{evans2010, oksendal2003}), of the survival probability
of Brownian particles in central potentials, and of structural credit-risk
spreads in Merton-type models \cite{Boyle2002,Vasicek1977,merton1974, NEIL}. In each of these
seemingly unrelated settings, the asymptotic plateau exhibited by the
Riccati drift, the convexity of the wave function $u$ and the concavity
of the value function $z$ are universal features that emerge from the
common triality structure unveiled here.

\subsection{Literature review and novelty}\label{sec:intro-literature}

The mathematical relationship between linear second-order ODEs and
Riccati equations is classical: any non-vanishing solution of the linear
equation $u''+a(x)u'+b(x)u=0$ generates, via the substitution
$y=-u'/(q_{2}u)$, a solution of an associated Riccati equation, and
conversely \cite{Reid1972, Bittanti1991, hartman2002}. Riccati equations
have a long history in classical control theory and in the analysis of
algebraic Riccati equations of LQR-type \cite{Lancaster1995, Bittanti1991,
Reid1972}; they also appear naturally in the spectral analysis of
Sturm--Liouville problems \cite{coddingtonlevinson1955, hartman2002,
reedsimon1978} and in the theory of conjugate points and oscillation in
geometry. In the stochastic setting, the linearization of the
Hamilton--Jacobi--Bellman equation by the exponential / Cole--Hopf
transformation $u=\exp(-z/(2\sigma^{2}))$ has been a standard tool ever
since the seminal work of Fleming and collaborators \cite{fleming2006,
flemingsoner1992} and is at the heart of stochastic-control textbooks
\cite{yongzhou1999, krylov1980, oksendal2003, krylov2008}. The associated
viscosity-solution theory of HJB equations is developed in
Crandall--Lions and Crandall--Ishii--Lions \cite{crandalllions1983,
crandallishiilions1992} and Bardi--Capuzzo-Dolcetta \cite{bardi1997}.

In the radial setting, the stationary Schr\"{o}dinger equation and the
qualitative theory of its regular solutions have been extensively studied
within the framework of central potentials and supersymmetric quantum
mechanics \cite{berezin2012, reedsimon1978, cooper1995, Kenichi2023}.
Existence and uniqueness of radial solutions for nonlinear elliptic
equations on balls or on the whole space have been investigated, among many
others, in \cite{lasryliones1989, gilbarg, lieblos2001} and in the
references therein. The asymptotic behavior of Riccati equations is
discussed at length in \cite{Reid1972, Bittanti1991, hartman2002}, but, to the best of our knowledge, the
\emph{global monotone barrier mechanism} that we develop in
Proposition~\ref{prop:asymptotics_gen}, providing the exact identification
of the asymptotic plateau in terms of the algebraic equilibrium $g$, does
not appear in the literature in this generality.

The connection between linear auxiliary radial equations of the form
\eqref{eq:lin_ode_intro} and stochastic production-planning problems was
recently developed by Canepa, Covei and Pirvu \cite{cluj2022} and Covei
\cite{arxiv2025}, primarily in the case of quadratic running costs. The
present paper goes substantially beyond those works in the following
directions:

\begin{itemize}
\item We treat \emph{arbitrary continuous, non-negative} costs $Q(r)$ with
quadratic asymptotic growth $Q(r)/r^{2}\to L$ at infinity, and we obtain
the precise asymptotic plateau through an entirely new global-barrier
mechanism.

\item We perform a complete \emph{Frobenius / regular-singular point
analysis} at $r=0$, isolating the regular branch of $u$ and proving
$\phi\in C^{1}([0,R))$ together with $\phi(0)=\phi'(0)=0$, with explicit
local expansions.

\item We integrate the nonlinear Riccati equation, the linear
Schr\"{o}dinger equation and the nonlinear HJB equation into a single
\emph{Triality Theorem} that establishes a globally invertible bijection
between their regular branches.

\item We prove rigorous \emph{stochastic verification theorems} both in the
stationary radial setting and in the parabolic Wick-rotated setting,
giving explicit optimal feedback laws.

\item We establish a complete \emph{$\sigma$-sensitivity theorem}, with a
WKB-type expansion in the vanishing-noise regime $\sigma\downarrow 0$ and
a uniform decay estimate in the high-noise regime
$\sigma\uparrow\infty$, both controlled in suitable function spaces.

\item We develop a fully documented \emph{numerical methodology} based on
the Radau IIA implicit Runge--Kutta scheme \cite{HairerWannerII,
hairerlubichroche, butcher2016}, including its $A$- and $L$-stability
properties, classical order, error analysis on the radial mesh, and
explicit implementation of the algebraic barrier $g(r)$.
\end{itemize}

These results, taken together, constitute the \emph{first unified
treatment} of the radial Riccati--Schr\"{o}dinger--HJB system in the
literature, generalizing in a substantial way both \cite{cluj2022,
arxiv2025} and the classical references on Riccati and HJB asymptotics.

\subsection{Statement of the principal results}\label{sec:intro-results}

For the convenience of the reader, we record here the main theorems proved
in this article. Each of these statements is, to the best of our knowledge,
\emph{entirely new} in the generality presented and constitutes the first
unified treatment of the radial Riccati--Schr\"{o}dinger--HJB system.

\begin{enumerate}
\item \textbf{(Radial Existence--Uniqueness Theorem,
Theorem~\ref{thm:existence_rigorous} and Theorem~\ref{thm:global}.)} Under
minimal assumptions on the cost $Q\in C([0,R])$ (resp.\ $Q\in
C([0,\infty))$) with quadratic regularity at the origin, the radial
Riccati equation \eqref{eq:ric_ode} admits a unique \emph{regular}
solution $\phi\in C^{1}([0,R))$ (resp.\ $C^{1}([0,\infty))$) with
$\phi(0)=0$. The proof is based on a detailed Frobenius analysis at the
regular singular point $r=0$ and on standard continuation arguments for
linear ODEs.

\item \textbf{(Riccati Asymptotic Theorem,
Proposition~\ref{prop:asymptotics_gen} and
Corollary~\ref{cor:radial_asymptotics}.)} For any cost $Q\in C([0,\infty))$
with $Q\geq 0$, $Q(0)=0$ and
$\lim_{r\to\infty}Q(r)/r^{2}=L\in(0,\infty)$, the regular solution $\phi$
of \eqref{eq:ric_ode} satisfies the global trapping inequality
$0<\phi(r)<g(r)$, is strictly increasing, and converges to the universal
\emph{plateau}
\[
\lim_{r\to\infty}\phi(r)\;=\;\frac{\sqrt{L}}{\sigma^{2}},
\]
where $g$ is the algebraic equilibrium of the Riccati flow defined in
\eqref{g-radial-integrated}. This identification, together with the
new monotone barrier method, generalizes the asymptotic results of
\cite{cluj2022, arxiv2025}.

\item \textbf{(Triality Theorem,
Theorem~\ref{thm:triality}.)} The maps in
\eqref{eq:triality_intro_maps} establish a globally invertible bijection
between regular solutions of the radial Riccati equation
\eqref{eq:ric_ode}, the radial stationary Schr\"{o}dinger equation
\eqref{eq:lin_ode_intro} and the radial HJB equation
\eqref{eq:hjb_intro}. Moreover, this bijection transfers
qualitative information across the three formulations: positivity of
$\phi$ implies strict convexity of $u$ and strict concavity of $z$.

\item \textbf{(Stochastic Verification Theorem -- Stationary Radial Case,
Theorem~\ref{thm:radial_verification}.)} The function
$z=-2\sigma^{2}\ln u$ is the value function of the associated
infinite-horizon stochastic control problem on $\mathbb{R}^{N}$, and the
optimal feedback law is given by
$\alpha^{\ast}(x)=2\sigma^{2}\phi(|x|)\,x$.

\item \textbf{(Stochastic Verification Theorem -- Parabolic Radial Case,
Theorem~\ref{thm:parabolic_verification}.)} The reduced parabolic HJB
equation \eqref{eq:parabolic_HJB_reduced} admits, under polynomial growth
hypotheses, a classical value function $\bar{U}(t,x)$, and the optimal
feedback law $\alpha^{\ast}(t,x)=-\nabla\bar{U}(t,x)$ is verified
rigorously through It\^{o} calculus and stopping-time localization. The
parabolic Cole--Hopf transform $\Psi=\exp(-\bar{U}/(2\sigma^{2}))$
identifies $\bar{U}$ with the (Wick-rotated) time-dependent
Schr\"{o}dinger evolution, completing the parabolic extension of the
triality.

\item \textbf{($\sigma$-Sensitivity Theorem, Theorem~\ref{thm:sigma}.)}
The regular Riccati solution $\phi_{\sigma}$ depends continuously on
$\sigma>0$ and obeys two universal asymptotic regimes:
\[
\sigma\downarrow 0:\quad
\sigma^{2}\phi_{\sigma}(r)\longrightarrow \frac{\sqrt{Q(r)}}{r}\quad
\text{(WKB / vanishing-noise eikonal limit),}
\]
\[
\sigma\uparrow\infty:\quad
\sup_{r\in[0,R]}\phi_{\sigma}(r)\;\le\;\frac{R^{2}\|Q\|_{\infty}}{N\sigma^{4}}
\;\to\;0\quad\text{(diffusion-dominated regime).}
\]
\end{enumerate}

\subsection{Organization of the article}\label{sec:intro-organization}

The remainder of the article is structured to reflect the logical
progression from foundational notation and motivations to the full
development of the radial triality framework, its analytical consequences
and its numerical verification. The article comprises a substantial body
of theoretical and applied content (over sixty pages including the
appendices), organized as follows.

\begin{itemize}
\item Section~\ref{sec:notation} introduces the notation, function spaces,
and standing assumptions used throughout the paper, and gathers the few
preliminary tools (Frobenius analysis, Cole--Hopf transform, viscosity
solutions, It\^{o} calculus) on which the rigorous proofs rely.

\item Section~\ref{sec:motivations} elaborates further on the
mathematical and physical motivations underlying the study, emphasizing
the duality between nonlinear control and linear operator theory.

\item Section~\ref{sec:main_results} develops the general theoretical
framework: the asymptotic barrier theory for Riccati equations, the
linear second-order reduction with a complete existence / uniqueness
proof of the linear auxiliary equation by successive approximations and
Volterra integral equations, the parabolic embedding, and the stochastic
control verification theorem in the general setting.

\item Section~\ref{Triality} formalizes the radial triality, resolves the
coordinate singularity at $r=0$, derives the universal asymptotic plateau,
and proves the curvature transfer principle.

\item Section~\ref{sec:dirichlet} treats the bounded-domain scenario,
establishing the strict convexity of $u$ and the resulting Riccati
implications.

\item Section~\ref{sec:sensitivity} carries out the $\sigma$-sensitivity
analysis, with the vanishing-noise WKB expansion and the high-noise
saturation regime.

\item Section~\ref{sec:tdse} extends the framework to the time-dependent
Schr\"{o}dinger equation, the Wick-rotated parabolic HJB equation and the
associated stochastic verification.

\item Section~\ref{sec:numericalmethod} presents the complete numerical
methodology: the Radau IIA implicit Runge--Kutta scheme, its $A$- and
$L$-stability, classical order, error analysis on the radial mesh and
the implementation of the algebraic barrier $g(r)$.

\item Section~\ref{sec:numerical} reports the numerical results,
verifying all theoretical predictions and revealing a striking analogy
with Merton-type credit spreads.

\item Section~\ref{sec:conclusions} summarizes the main findings,
discusses physical and computational implications, and outlines
future extensions.

\item The appendices contain the full Python implementations and
additional computational details used throughout the paper.
\end{itemize}

\section{Notation and Preliminaries}\label{sec:notation}

In this short section we collect the notation, function spaces and
preliminary tools that will be used throughout the paper. All proofs are
based on classical analysis and may be found, with full references, in
\cite{coddingtonlevinson1955, hartman2002, evans2010, gilbarg, oksendal2003,
fleming2006, yongzhou1999, reedsimon1978, berezin2012, HairerWannerII,
butcher2016}.

\subsection{Sets, dimensions and basic notation}

Throughout the paper, $N\in\mathbb{N}$, $N\ge 1$, denotes the spatial
dimension and $|x|$ denotes the Euclidean norm of $x\in\mathbb{R}^{N}$.
We write $B_{R}=B_{R}(0)=\{x\in\mathbb{R}^{N}\,:\,|x|<R\}$ for the open
ball of radius $R>0$ centered at the origin and $\partial B_{R}$ for its
boundary. For any $a<b$, we denote by $C^{k}([a,b])$ (resp.
$C^{k}((a,b))$) the space of real-valued functions $k$ times continuously
differentiable on $[a,b]$ (resp.\ $(a,b)$). The convention $f^{(0)}=f$ is
in force. Spaces of bounded continuous functions on $[0,\infty)$ are
denoted by $C_{b}([0,\infty))$. The notation $f(r)=o(g(r))$,
$f(r)=\mathcal{O}(g(r))$ as $r\to r_{0}$ has its usual asymptotic meaning.

\subsection{Radial Laplacian and Frobenius indicial equation}

If $u:\mathbb{R}^{N}\to\mathbb{R}$ is a $C^{2}$ radially symmetric function,
i.e., $u(x)=u(|x|)$ for some abuse of notation, then the action of the
Laplacian reads
\begin{equation}
\Delta u(x)\;=\;u^{\prime\prime}(r)+\frac{N-1}{r}u^{\prime}(r),
\qquad r=|x|>0.
\label{eq:radial_laplacian}
\end{equation}
The radial Laplacian \eqref{eq:radial_laplacian} possesses a regular
singular point at $r=0$. For an equation of the form
\[
u^{\prime\prime}(r)+P(r)u^{\prime}(r)+T(r)u(r)=0,
\]
with $rP(r)$ and $r^{2}T(r)$ continuous (analytic) at $r=0$, the indicial
equation
\[
\alpha(\alpha-1)+\big(\lim_{r\to 0}rP(r)\big)\alpha+\lim_{r\to 0}r^{2}T(r)=0
\]
has two roots $\alpha_{1}$ and $\alpha_{2}$. The Frobenius theorem
\cite{coddingtonlevinson1955, hartman2002} guarantees the existence of a
solution of the form $u(r)=r^{\alpha}\sum_{k\geq0}c_{k}r^{k}$ with
$c_{0}\neq 0$. We refer to the solution selected by the larger root
$\alpha_{1}=0$ in our setting as the \emph{regular} branch. The
boundary condition $u(0)=1$, $u'(0)=0$ uniquely identifies this branch.

\subsection{Cole--Hopf and logarithmic transformations}

Let $u\in C^{2}((0,X))$ be strictly positive on its domain. We use the
following two logarithmic transformations throughout this paper:
\begin{equation}
\phi(r)\;=\;\frac{1}{r}\frac{u^{\prime}(r)}{u(r)}\qquad\text{(Riccati transform)}
\label{eq:notation-Riccati}
\end{equation}
and
\begin{equation}
z(r)\;=\;-2\sigma^{2}\ln u(r)\qquad\text{(Cole--Hopf / value-function transform).}
\label{eq:notation-ColeHopf}
\end{equation}
The transformation \eqref{eq:notation-ColeHopf} is the radial restriction
of the classical Cole--Hopf substitution that linearizes
Hamilton--Jacobi equations of quadratic gradient type, see
\cite{evans2010,fleming2006,yongzhou1999}.

\subsection{Stochastic and It\^{o}-calculus framework}

We work on a filtered probability space
$(\Omega,\mathcal{F},\{\mathcal{F}_{t}\}_{t\geq 0},\mathbb{P})$ satisfying
the usual conditions of right-continuity and $\mathbb{P}$-completeness,
equipped with a standard $d$-dimensional Brownian motion $W=(W_{t})_{t\geq
0}$ ($d\in\{1,N\}$ depending on the context). For $T>0$ we use
$L^{2}_{\mathcal{F}}(0,T;\mathbb{R}^{m})$ to denote the space of
$\{\mathcal{F}_{t}\}$-progressively measurable processes
$\alpha:\Omega\times[0,T]\to\mathbb{R}^{m}$ with
$\mathbb{E}\int_{0}^{T}|\alpha_{s}|^{2}ds<\infty$. We refer to
\cite{oksendal2003, krylov1980, krylov2008, fleming2006, yongzhou1999} for
the It\^{o}--Doeblin formula, the strong existence and uniqueness
theory for SDEs with Lipschitz coefficients, and the standard
stochastic-control formulations of HJB equations.

\subsection{Standing assumptions on the cost $Q$}

Unless otherwise specified, the radial cost function $Q$ is assumed to
satisfy the following \emph{standing assumptions}, denoted by
$\mathrm{(H1)}$--$\mathrm{(H3)}$:
\begin{itemize}
\item[$\mathrm{(H1)}$] $Q\in C([0,\infty))$ is non-negative and
$Q(0)=0$.
\item[$\mathrm{(H2)}$] (Quadratic regularity at the origin)
$L_{0}:=\lim_{r\downarrow 0}Q(r)/r^{2}$ exists in $[0,\infty)$.
\item[$\mathrm{(H3)}$] (Quadratic growth at infinity)
$L:=\lim_{r\to\infty}Q(r)/r^{2}$ exists in $[0,\infty)$.
\end{itemize}
Whenever needed we further restrict to the regimes $L\in(0,\infty)$
(non-degenerate quadratic growth) or $L=0$, in which case
Proposition~\ref{propzero} applies. The diffusion intensity $\sigma>0$ is
fixed unless we explicitly study its sensitivity in
Section~\ref{sec:sensitivity}.

\subsection{The algebraic equilibrium $g$ and the trapping mechanism}

Given a Riccati equation 
\[
y^{\prime }(x)=q_{0}(x)+q_{1}(x)\,y(x)+q_{2}(x)\,y(x)^{2},
\]%
we call a barier function $g(x)$ a positive \emph{algebraic equilibrium} if
it is a positive root of the quadratic polynomial equation 
\begin{equation}
X^{2}+\frac{q_{1}(x)}{q_{2}(x)}X+\frac{q_{0}(x)}{q_{2}(x)}=0\text{, }%
q_{2}(x)\neq 0.  \label{eq:notation-g}
\end{equation}
The function $g$ plays the role of an instantaneous, position-dependent
equilibrium of the Riccati flow, since the right-hand side of the equation
vanishes at $y=g(x)$. The novel \emph{global trapping inequality}
$0<y(x)<g(x)$, proved in Proposition~\ref{prop:asymptotics_gen} below,
expresses that the regular solution starting from $y(x_{0})=0$ remains
strictly below $g$ for all subsequent positions.

\section{Mathematical and Physical Motivations}\label{sec:motivations}

The synergistic study of Riccati-type equations and radial Schr\"{o}dinger
equations is motivated by both mathematical elegance and profound physical
applications. This duality allows us to bridge the gap between nonlinear
dynamics and linear operator theory.

\subsection{The Riccati Perspective: Nonlinearity and Control}

The Riccati equation \eqref{eq:ric_ode} is a fundamental constituent of
modern control theory and stochastic analysis:

\begin{enumerate}
\item \textbf{Optimal Feedback Control}: In the context of the
Hamilton--Jacobi--Bellman (HJB) equations, the Riccati solution $\phi(r)$
represents the optimal feedback gain. For systems with quadratic costs, $%
\phi $ provides the precise drift necessary to minimize the expected path
cost in a noisy environment.

\item \textbf{Phase Analysis}: Unlike the wave function $u$, which can grow
or decay exponentially, the Riccati variable $\phi$ often settles into a
steady state or exhibits bounded behavior (as shown in Section 4). This
makes $\phi$ a superior variable for analyzing the "rate of flow" or the
stable drift of the system.

\item \textbf{Singular Perturbation}: The sensitivity analysis with respect
to $\sigma $ (Section \ref{sec:sensitivity}) highlights how the Riccati
equation provides a natural framework for studying the vanishing viscosity
limit, connecting stochastic dynamics to deterministic mechanics.
\end{enumerate}

\subsection{The Schr\"{o}dinger Perspective: Linearity and Regularity}

Transforming the nonlinear Riccati equation into the linear Schr\"{o}%
dinger-type equation \eqref{eq:lin_ode} provides several rigorous advantages:

\begin{enumerate}
\item \textbf{Linear Superposition}: The linearity of \eqref{eq:lin_ode}
allows for the expansion of solutions in terms of eigenfunctions and the use
of the spectral theorem. This provides a global view of the system's states
that is not easily accessible from the nonlinear form.

\item \textbf{Regularity at the Origin}: The Frobenius analysis (Section \ref%
{sec:main_results}) identifies the "regular" branch of the wave function $u$%
, which corresponds to physically meaningful states (finite density at the
origin). The Schr\"{o}dinger framework makes it trivial to distinguish
between these regular states and singular, non-physical solutions.

\item \textbf{Potential Theory}: By identifying $V(x)=Q(|x|)/\sigma ^{4}$ as
a potential, we can leverage century-old techniques from quantum mechanics
(WKB approximation, tunneling analysis, central force motion) to predict the
qualitative behavior of $u$ and, subsequently, $\phi $.
\end{enumerate}

The mapping $u \longleftrightarrow \phi$ is thus not merely a change of
variable, but a transformation between the \textit{global state} (wave
function) and the \textit{local dynamics} (drift velocity field).

\section{Main Results in General Frameworks and Their Detailed Proofs \label%
{sec:main_results}}

We proceed to establish the formal triality framework in General Frameworks.

\subsection{Theoretical Framework for General Riccati Asymptotics \label%
{sec:asymptotics_theory}}

This section establishes the general asymptotic theory for Riccati equation.
We provide a rigorous treatment of the convergence properties of the drift
field through the construction of monotonic barrier functions, which serve
as the foundation for the radial analysis that follows.

\begin{definition}[Equilibrium point of the Riccati flow]
Let 
\begin{equation}
y^{\prime }(x)=q_{0}(x)+q_{1}(x)y(x)+q_{2}(x)y(x)^{2},  \label{R}
\end{equation}%
be a Riccati equation on $[x_{0},\infty )$. A function $g:(x_{0},\infty
)\rightarrow \mathbb{R}$ is called an \emph{algebraic equilibrium} (or \emph{%
instantaneous equilibrium}) if for every $x>x_{0}$ it satisfies 
\begin{equation*}
\frac{q_{0}(x)}{q_{2}(x)}+\frac{q_{1}(x)}{q_{2}(x)}g(x)+g(x)^{2}=0\text{, }%
q_{2}(x)\neq 0\text{ }\forall x>x_{0}.
\end{equation*}%
Equivalently, $g(x)$ is a root of the quadratic polynomial $X^{2}+\frac{%
q_{1}(x)}{q_{2}(x)}X+\frac{q_{0}(x)}{q_{2}(x)}$, $q_{2}(x)\neq 0$ $\forall
x>x_{0}$, and represents the value at which the Riccati vector field
vanishes at position $x$.
\end{definition}
Having identified the algebraic equilibrium $g$ as the instantaneous fixed
point of the Riccati flow, we now establish a general asymptotic domination
principle showing how the solution $y$ evolves relative to this moving
equilibrium.

\begin{proposition}[Asymptotics for the general Riccati equation]
\label{prop:asymptotics_gen} Consider the Riccati equation (\ref{R}) where $q_{0},q_{1},q_{2}\in C\bigl(%
\lbrack x_{0},\infty )\bigr)$, possibly singular at $x_{0}$, such that%
\begin{equation*}
q_{0}:[x_{0},\infty )\rightarrow \lbrack 0,\infty ),\qquad q_{1} :
[x_{0},\infty) \to \mathbb{R},\qquad q_{2}:[x_{0},\infty )\rightarrow
(-\infty ,0],
\end{equation*}%
and algebraic equation%
\begin{equation}
\frac{q_{0}(x)}{q_{2}(x)}+\frac{q_{1}(x)}{q_{2}(x)}g(x)+g(x)^{2}=0\text{, }%
q_{2}(x)\neq 0\text{ }\forall x>x_{0}\text{,}  \label{gg}
\end{equation}%
admits a unique solution $g:(x_{0},\infty )\rightarrow \lbrack 0,\infty )$
called the barrier function. (Consequently, the discriminant condition%
\[
\left( \frac{q_{1}(x)}{q_{2}(x)}\right) ^{2}-4\frac{q_{0}(x)}{q_{2}(x)}\geq 0%
\text{, for all }x>x_{0}\text{,}
\]%
must hold.) Assume that there exist the limits%
\[
\lim_{x\rightarrow \infty }\frac{q_{0}(x)}{q_{2}(x)}=A\in \left( -\infty
,0\right) ,\qquad \lim_{x\rightarrow \infty }\frac{q_{1}(x)}{q_{2}(x)}=B\in 
\mathbb{R},\qquad \lim_{x\rightarrow \infty }q_{2}(x)=C\in \left[ -\infty
,0\right) ,
\]%
and suppose that:

\begin{itemize}
\item $g\in C^{1}((x_{0},\infty ))$ is \emph{monotonically increasing} and 
\emph{bounded from above};%
\[
\underset{x\rightarrow x_{0}}{\lim }g\left( x\right) \in \left[ 0,\lambda
_{\ast }\right],\qquad \lim_{x\rightarrow \infty }g(x)=\lambda _{\ast }\in
(0,\infty );
\]

\item there exists $\delta >0$ such that 
\begin{equation}
q_{1}(x)+q_{2}(x)g(x)\leq -\delta <0,\qquad \forall x\geq x_{0}.
\label{cond-extra-general}
\end{equation}%
Then the regular solution $y(x)$ of equation \eqref{R} with $y(x_{0})=0$
satisfies:
\end{itemize}

\begin{enumerate}
\item $0<y(x)<g(x)$ for all $x>x_{0}$;

\item $y$ is strictly increasing on $[x_{0},\infty )$;

\item the following limit exists:%
\[
\lim_{x\rightarrow \infty }y(x)=\lambda _{\ast }.
\]
\end{enumerate}
\end{proposition}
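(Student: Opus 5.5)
The plan is to rewrite the right-hand side of \eqref{R} around the barrier $g$ and then argue by comparison. Set $F(x,y)=q_{0}+q_{1}y+q_{2}y^{2}$. Since $F(x,g(x))=0$, subtracting gives the exact identity
\[
F(x,y)=F(x,y)-F(x,g)=(y-g)\bigl(q_{1}+q_{2}(y+g)\bigr).
\]
Hence $w:=y-g$ satisfies the linear equation
\[
w'=a(x)\,w-g'(x),\qquad a(x):=q_{1}(x)+q_{2}(x)g(x)+q_{2}(x)y(x).
\]
As long as $y\ge 0$, we have $q_{2}y\le 0$, and \eqref{cond-extra-general} then gives $a(x)\le-\delta$.

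\textbf{Step 1 (positivity and monotonicity inside the strip).} Write $F(x,y)=q_{2}(y-g)(y-h)$, where $h$ is the second root. The product of the roots is $gh=q_{0}/q_{2}\le 0$, so $h\le 0\le g$. Consequently $F(x,y)>0$ whenever $0<y<g$, since then $q_{2}<0$, $y-g<0$ and $y-h>0$. Also $F(x,0)=q_{0}\ge 0$, so $0$ is a subsolution and $y\ge 0$ by comparison.

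For strict positivity just to the right of the (possibly singular) point $x_{0}$, I would use that $q_{0}\not\equiv 0$ near $x_{0}$, or else the local structure of the regular solution; in the radial case this comes from the Frobenius expansion. Note that $A<0$ only guarantees $q_{0}>0$ for large $x$. Once $y>0$ somewhere, it stays positive, because $F(x,0)\ge 0$ prevents crossing zero downward.

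\textbf{Step 2 (trapping $y<g$).} From the linear equation for $w$, variation of constants gives, for $x_{0}<s<x$,
\[
w(x)=w(s)e^{\int_{s}^{x}a}-\int_{s}^{x}e^{\int_{t}^{x}a}\,g'(t)\,dt .
\]
Because $g'\ge 0$, the integral term is nonnegative, so $w(x)\le w(s)e^{\int_s^x a}$. Near $x_{0}$ we have $y\approx 0$ and $g\ge 0$, so $w(s)\le 0$. Whenever $w(s)<0$ (in particular when $g(x_0^+)>0$, or when $g$ grows strictly), this yields $w(x)<0$ for all $x>s$, that is, $y<g$.

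I expect the main obstacle to be the degenerate start $\lim_{x\to x_0}g=0=y(x_0)$. There I would exploit the strict increase of $g$ near $x_{0}$, or compare the leading-order behaviour of $y'$ and $g'$ there. Combining Steps 1 and 2 gives $0<y<g$ and $y'=F(x,y)>0$, which proves items 1 and 2.

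\textbf{Step 3 (the limit).} Since $y$ is increasing and bounded by $\lambda_{\ast}$, the limit $\ell=\lim_{x\to\infty}y(x)\le\lambda_{\ast}$ exists. It remains to show $w\to 0$. In the variation-of-constants formula with $a\le-\delta$, the first term is bounded by $|w(s)|e^{-\delta(x-s)}\to 0$. The second term is bounded by $\int_{s}^{x}e^{-\delta(x-t)}g'(t)\,dt$.

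To control that integral, split it at $t=x/2$. The piece over $[x/2,x]$ is at most $g(x)-g(x/2)\to 0$, since $g$ converges. The piece over $[s,x/2]$ is at most $e^{-\delta x/2}\,\lambda_{\ast}\to 0$. Hence $w(x)\to 0$, so $\ell=\lambda_{\ast}$, which is item 3.
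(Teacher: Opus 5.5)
Your Steps~1 and~3 work, but Step~2 has a genuine gap. It does not cover the degenerate start $g(x_{0}^{+})=0$, which is exactly the case used in Corollary~\ref{cor:radial_asymptotics}, where $\lim_{r\downarrow0}g(r)=0$. There the claim ``$y\approx0$ and $g\ge0$, so $w(s)\le0$'' is unjustified, because $y(s)$ and $g(s)$ both tend to $0$. The remedies you mention are not carried out, and no leading-order information on $y'$, $g'$ is available in this generality. The paper's proof does not help either: it asserts $0<y<g$ on $(x_{0},x_{0}+\varepsilon)$ without argument, then needs $g'(x_{1})>0$ at a first contact point.

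The missing step is to let $s\downarrow x_{0}$ in your own variation-of-constants identity. Since $y\ge0$, you have $a\le-\delta$, so $0<e^{\int_{s}^{x}a}\le1$ even if $a$ is singular at $x_{0}$. Because $w(s)\to-g(x_{0}^{+})\le0$, the first term has non-positive $\limsup$ as $s\downarrow x_{0}$. The integral term increases monotonically to a finite limit. Therefore
\[ w(x)\;\le\;-\int_{x_{0}}^{x}e^{\int_{t}^{x}a(\tau)\,d\tau}\,g'(t)\,dt\;\le\;0,\qquad x>x_{0}, \]
with strict inequality unless $g'\equiv0$ on $(x_{0},x)$. If $g$ is constant there with $g(x_{0}^{+})>0$, your original argument applies. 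For the non-strict bound alone there is a simpler route: wherever $w>0$ you have $w'=aw-g'\le-\delta w<0$, so $w$ can never become positive.

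It remains to treat $g'\equiv0$ on $(x_{0},x)$ with $g(x_{0}^{+})=0$, i.e.\ $g\equiv0$ there. Writing $h$ for the second root, $q_{1}+q_{2}g=-q_{2}h$. So \eqref{cond-extra-general} forces $h<0$, and hence $g(x)=0\iff q_{0}(x)=0$. The exceptional case is therefore exactly $q_{0}\equiv0$ on an initial interval, which is also where your positivity caveat in Step~1 bites. There the \emph{statement} fails, not just your proof. Take $x_{0}=0$, $q_{1}=q_{2}=-1$, and $q_{0}$ smooth, nondecreasing, vanishing on $[0,1]$, with $q_{0}\to1$. All hypotheses hold, with $g=(\sqrt{1+4q_{0}}-1)/2$, $\delta=1$ and $\lambda_{\ast}=(\sqrt{5}-1)/2$. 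Yet $y\equiv g\equiv0$ on $[0,1]$, so items~1 and~2 are false. You need an extra assumption: either $q_{0}\not\equiv0$ on every interval $(x_{0},x_{0}+\eta)$, or ``increasing'' read as strictly increasing, which forces $g>0$ and hence $q_{0}>0$ on $(x_{0},\infty)$.

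With that assumption, your Steps~1--2 plus the display above give items~1--2. Your Step~3 then gives item~3 by a different route from the paper's, which uses a first-contact argument and passes to the limit in the equation. Your route is more robust. The paper's step~(e) relies on $y'\to0$ for a bounded monotone $y$, which is false in general. Your decay estimate for $w$ uses only $a\le-\delta$ and $g\uparrow\lambda_{\ast}$, and does not even need the limits $A,B,C$.
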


\begin{proof}
\textbf{a. Local existence and uniqueness.} From the standard theory of
first-order ODEs with continuous coefficients, equation \eqref{R} admits a
unique solution $y\in C^{1}$ in a neighborhood of $x_{0}$ for the initial
condition $y(x_{0})=0$. A complete proof is given in the Section \ref{schrodinger}.

\medskip \textbf{b. The Riccati-barrier relation.} By definition, $g$ is the
positive root of the algebraic equation%
\begin{equation*}
q_{0}(x)+q_{1}(x)g(x)+q_{2}(x)g(x)^{2}=0,\qquad x\geq x_{0}.
\end{equation*}%
We write the Riccati equation:%
\begin{equation*}
y^{\prime }(x)=q_{0}(x)+q_{1}(x)y(x)+q_{2}(x)y(x)^{2}.
\end{equation*}%
Subtracting the two expressions, we obtain: 
\begin{align*}
y^{\prime }(x)& =q_{0}+q_{1}y+q_{2}y^{2}=-q_{1}g-q_{2}g^{2}+q_{1}y+q_{2}y^{2}
\\
& =q_{1}\left( x\right) (y(x)-g(x))+q_{2}(x)(y(x)-g(x))(y(x)+g(x)) \\
& =(y(x)-g(x))\bigl(q_{1}(x)+q_{2}(x)(y(x)+g(x))\bigr).
\end{align*}%
Thus, 
\begin{equation}
y^{\prime }(x)=(y-g)\bigl(q_{1}+q_{2}(y+g)\bigr).  \label{y-prim-general}
\end{equation}

\medskip \textbf{c. The inequality $0<y<g$ on $(x_{0},\infty )$}. Without
loss of generality, we may restrict to the case $\lim_{x\to x_{0}} g(x)=0$,
since the proof only requires that $g(x)>0$ for $x>x_{0}$ sufficiently close
to $x_{0}$; when the limit is any $\ell\in(0,\lambda_*]$, continuity ensures 
$g(x)>\ell/2>0$ in a right--neighborhood of $x_{0}$, and the argument
proceeds identically. So, since $\lim_{x\rightarrow x_{0}}g(x)=0$ and by
continuity, for $x>x_{0}$ sufficiently close to $x_{0}$ we have $g(x)>0$. At 
$x_{0}$:%
\begin{equation*}
y^{\prime }(x_{0})=q_{0}(x_{0})\geq 0.
\end{equation*}%
If $q_{0}(x_{0})>0$, then $y(x)>0$ for $x>x_{0}$ sufficiently small; if $%
q_{0}(x_{0})=0$, the uniqueness of the solution prevents $y$ from taking
negative values immediately after $x_{0}$ (otherwise, there would be a
solution different from $y\equiv 0$ that coincides with it at $x_{0}$). It
follows that there exists $\varepsilon >0$ such that%
\begin{equation*}
0<y(x)<g(x),\qquad x\in (x_{0},x_{0}+\varepsilon ).
\end{equation*}%
Assume for the sake of contradiction that there exists a first $x_{1}>x_{0}$
such that $y(x_{1})=g(x_{1})$. Then $y(x)<g(x)$ for $x\in (x_{0},x_{1})$,
and from \eqref{y-prim-general} and \eqref{cond-extra-general} we obtain,
for $x\in (x_{0},x_{1})$:%
\begin{equation*}
q_{1}(x)+q_{2}(x)(y(x)+g(x))\leq q_{1}(x)+q_{2}(x)g(x)\leq -\delta <0,
\end{equation*}%
hence%
\begin{equation*}
y^{\prime }(x)=(y-g)\bigl(q_{1}+q_{2}(y+g)\bigr)>0,
\end{equation*}%
since $y-g<0$. Therefore $y$ is increasing on $(x_{0},x_{1})$ and%
\begin{equation*}
y(x_{1})>y(x_{0})=0.
\end{equation*}%
At $x_{1}$, from \eqref{y-prim-general}:%
\begin{equation*}
y^{\prime }(x_{1})=(y-g)\bigl(q_{1}+q_{2}(y+g)\bigr)\big|_{x=x_{1}}=0.
\end{equation*}%
On the other hand, $g$ is increasing, so $g^{\prime }(x_{1})>0$, which
implies%
\begin{equation*}
\frac{d}{dx}\bigl(g(x)-y(x)\bigr)\Big|_{x=x_{1}}=g^{\prime
}(x_{1})-y^{\prime }(x_{1})>0,
\end{equation*}%
meaning that $g-y$ passes through zero with a positive derivative and
becomes positive immediately after $x_{1}$, which contradicts the fact that $%
x_{1}$ was the first contact point. This contradiction shows that no such $%
x_{1}$ exists, so%
\begin{equation*}
0<y(x)<g(x),\qquad \forall x>x_{0}.
\end{equation*}

\medskip \textbf{d. Monotonicity of $y$.} From \eqref{y-prim-general} and %
\eqref{cond-extra-general}, for any $x\geq x_{0}$:%
\begin{equation*}
q_{1}(x)+q_{2}(x)(y(x)+g(x))\leq q_{1}(x)+2q_{2}(x)g(x)\leq -\delta <0,
\end{equation*}%
while $y(x)-g(x)<0$. It follows that%
\begin{equation*}
y^{\prime }(x)=(y-g)\bigl(q_{1}+q_{2}(y+g)\bigr)>0,\qquad \forall x>x_{0},
\end{equation*}%
so $y$ is strictly increasing on $[x_{0},\infty )$.

\medskip \textbf{e. Existence and identification of the limit.} Since $0<y<g$
and $g$ is increasing and bounded from above, it follows that $y$ is
positive, increasing, and bounded from above, so it admits a finite limit:%
\begin{equation*}
\ell :=\lim_{x\rightarrow \infty }y(x)\in (0,\lambda _{\ast }].
\end{equation*}%
As the derivative of a bounded monotonic function, $y^{\prime
}(x)\rightarrow 0$ as $x\rightarrow \infty $. Passing to the limit in %
\eqref{R} and using the convergence of $q_{0},q_{1},q_{2}$, we obtain%
\begin{equation*}
0=\lim_{x\rightarrow \infty }\frac{y^{\prime }(x)}{q_{2}(x)}%
=\lim_{x\rightarrow \infty }\bigl(\frac{q_{0}(x)}{q_{2}(x)}+\frac{q_{1}(x)}{%
q_{2}(x)}y(x)+y(x)^{2}\bigr)=A+B\ell +\ell ^{2}.
\end{equation*}%
By the uniqueness of the positive root of the algebraic equation $A+B\lambda
+\lambda ^{2}=0$, we conclude that $\ell =\lambda _{\ast }$. Thus,%
\begin{equation*}
\lim_{x\rightarrow \infty }y(x)=\lambda _{\ast },
\end{equation*}%
and all the assertions of the proposition are proved.
\end{proof}
The next result furnishes a complete characterization---both necessary and
sufficient---of the conditions under which the equilibrium function $g$
exists and remains positive on $\left( x_{0},\infty \right) $. Such a
function is required in order to implement the general asymptotic theory
developed in this work, and its existence is therefore intimately related to
Proposition \ref{prop:asymptotics_gen}.

\begin{theorem}
\label{thm:positivity} Let $x_{0}\in \mathbb{R}$ and let%
\begin{equation*}
q_{0},q_{1},q_{2}\in C\bigl(\lbrack x_{0},\infty )\bigr),\text{ possibly
singular at }x_{0}\text{,}
\end{equation*}%
with%
\begin{equation*}
q_{2}(x)\neq 0,\qquad \forall x>x_{0}.
\end{equation*}
For $x>x_{0}$ define%
\begin{equation*}
A(x):=\frac{q_{0}(x)}{q_{2}(x)},\qquad B(x):=\frac{q_{1}(x)}{q_{2}(x)}%
,\qquad D(x):=B(x)^{2}-4A(x).
\end{equation*}%
Assume that the limits%
\begin{equation*}
\lim_{x\rightarrow \infty }A(x)=A_{\infty }\in (-\infty ,0),\qquad
\lim_{x\rightarrow \infty }B(x)=B_{\infty }\in \mathbb{R}\text{, }\qquad
\lim_{x\rightarrow \infty }C(x)=C_{\infty }\in \left[ -\infty ,0\right) 
\text{,}
\end{equation*}%
exist. Then $\displaystyle\lim_{x\rightarrow \infty }D(x)=B_{\infty
}^{2}-4A_{\infty }>0$, and the following statements are equivalent:

\begin{enumerate}
\item[(i)] There exists a unique function%
\begin{equation*}
g:(x_{0},\infty )\rightarrow \lbrack 0,\infty )
\end{equation*}%
such that 
\begin{equation}
g(x)^{2}+B(x)g(x)+A(x)=0,\qquad \forall x>x_{0},
\label{eq:barrier-algebraic-q2zero}
\end{equation}%
$g$ is continuous on $(x_{0},\infty )$, and%
\begin{equation*}
\lim_{x\rightarrow \infty }g(x)=\lambda _{\ast }\in (0,\infty ).
\end{equation*}%
(Optionally, one may also assume that the one-sided limit $\displaystyle%
\lim_{x\rightarrow x_{0}^{+}}g(x)$ exists and is finite.)
\end{enumerate}

\begin{enumerate}
\item[(ii)] The following conditions hold:

\begin{enumerate}
\item[(a)] $D(x)\geq 0$ for all $x>x_{0}$;

\item[(b)] the quadratic equation 
\begin{equation}
\lambda ^{2}+B_{\infty }\lambda +A_{\infty }=0
\label{eq:limit-quadratic-q2zero}
\end{equation}%
has a unique strictly positive root $\lambda _{\ast }>0$;

\item[(c)] there exists a choice of sign $\sigma \in \{-1,+1\}$ such that
the function 
\begin{equation}
g_{\ast }(x):=\frac{-B(x)+\sigma \sqrt{D(x)}}{2},\qquad x>x_{0},
\label{eq:g-star-def-q2zero}
\end{equation}%
satisfies%
\begin{equation*}
g_{\ast }(x)\geq 0,\qquad \forall x>x_{0},
\end{equation*}%
and%
\begin{equation*}
\lim_{x\rightarrow \infty }g_{\ast }(x)=\lambda _{\ast }.
\end{equation*}%
In this case, the unique function $g$ in \emph{(i)} is given by $g\equiv
g_{\ast }$ on $(x_{0},\infty )$.
\end{enumerate}
\end{enumerate}
\end{theorem}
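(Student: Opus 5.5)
Everything rests on the quadratic formula applied pointwise. I would begin with the preliminary claim: since $A(x)\to A_\infty<0$ and $B(x)\to B_\infty$, continuity of $(a,b)\mapsto b^2-4a$ gives $D(x)\to B_\infty^2-4A_\infty\ge -4A_\infty>0$. Note also that $A_\infty<0$ forces the product of the roots of \eqref{eq:limit-quadratic-q2zero} to be negative. So that limiting quadratic always has exactly one positive root,
\[
\lambda_\ast=\tfrac{1}{2}\bigl(-B_\infty+\sqrt{B_\infty^2-4A_\infty}\bigr).
\]
This observation will make (ii)(b) automatic in both directions. The hypothesis on $C$ plays no role beyond being part of the setting.

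\textbf{(i)$\Rightarrow$(ii).} Suppose $g$ is continuous, nonnegative and solves \eqref{eq:barrier-algebraic-q2zero} with $g\to\lambda_\ast>0$.
\begin{itemize}
\item For (a): for each $x>x_0$ the real quadratic $X^2+B(x)X+A(x)$ has the real root $g(x)$, so $D(x)\ge0$.
\item For (b): let $x\to\infty$ in \eqref{eq:barrier-algebraic-q2zero}; this gives $\lambda_\ast^2+B_\infty\lambda_\ast+A_\infty=0$. Uniqueness of the positive root follows from the sign of the product of the roots noted above.
\item For (c): pointwise, $g(x)=\tfrac12\bigl(-B(x)+s(x)\sqrt{D(x)}\bigr)$ for some $s(x)\in\{\pm1\}$, and I must show $s$ can be taken constant. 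Here I would use the uniqueness clause in (i).
\end{itemize}
For (c), write $g_\pm:=\tfrac12(-B\pm\sqrt D)$; both are continuous on $(x_0,\infty)$. Since $g_-\le g_+$, if $g_-(x)\ge0$ at some $x$ then both roots are admissible there. Uniqueness of $g$ rules out $g_-\ge0$ together with $D>0$ on any region where a continuous alternative exists. The cleanest route: $g_+\ge g\ge0$, so $g_+$ is itself a nonnegative continuous solution. Moreover $g_+\to\lambda_\ast$, because $g_-\to\tfrac12(-B_\infty-\sqrt{D_\infty})<0$ and so $g_+$ is the root tending to the positive limit. By the uniqueness in (i), $g=g_+$, so $\sigma=+1$ works.

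\textbf{(ii)$\Rightarrow$(i).} Set $g:=g_\ast$, which is continuous since $D\ge0$ and $\sqrt{\cdot}$ is continuous. It is nonnegative, solves the equation by the quadratic formula, and tends to $\lambda_\ast$ by (c). For uniqueness, let $h$ be another continuous nonnegative solution with a positive limit at infinity; pointwise $h\in\{g_-,g_+\}$. For large $x$, $g_-<0$, so $h=g_+$ there. On the remaining compact-type region the difficulty is that $g_-$ may also be nonnegative.

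The main obstacle is that \emph{unique} is genuinely fragile. Where $g_-\ge0$, and in particular where $A\ge0$, there can be two nonnegative continuous selections. I would handle this as follows:
\begin{itemize}
\item First note that in (ii)(c) necessarily $\sigma=+1$ (same limit argument as above).
\item Then interpret uniqueness in (i) as uniqueness among continuous selections with the given limit, taking $g$ to be the largest nonnegative root $g_+$.
\item Prove that any such $h$ equals $g_+$ wherever $g_-<0$.
\item Where $D>0$, continuity and $g_-<g_+$ prevent switching branches except through points with $D=0$, where the two branches coincide anyway.
\end{itemize}
If a fully literal uniqueness is demanded, I would add the natural implicit hypothesis $A(x)<0$ for $x>x_0$. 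This is the case $q_0>0$, $q_2<0$ used in Proposition~\ref{prop:asymptotics_gen}, and it gives $g_-<0<g_+$ everywhere, making uniqueness immediate.
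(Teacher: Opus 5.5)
Your suspicion about uniqueness in (ii)$\Rightarrow$(i) is justified. Push it to its conclusion: that uniqueness claim is false. Neither your reinterpretation of ``unique'' nor the paper's argument can establish it.

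\textbf{Where the paper's proof breaks.} The paper asserts that continuity of $g$ and $D\ge 0$ force the sign $\tau(x)$ to be constant. But the open--closed argument behind this only works on components of $\{D>0\}$.

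\textbf{Counterexample.} Take $x_0=0$, $q_2\equiv-1$, $q_1\equiv 2$, $q_0(x)=\min\{x^2-2x,\,1\}$. Then:
\begin{itemize}
\item $B\equiv-2$ and $A(x)=\max\{2x-x^2,\,-1\}$, so $A_\infty=-1$, $B_\infty=-2$, $q_2\to-1$.
\item $D(x)=\min\{4(x-1)^2,\,8\}$, which vanishes only at $x=1$.
\item Condition (ii) holds with $\sigma=+1$, $\lambda_\ast=1+\sqrt{2}$ and $g_+(x)=1+\min\{|x-1|,\sqrt{2}\}\ge 1$.
\end{itemize}
Now set $h(x)=\min\{x,\,1+\sqrt{2}\}$. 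It is continuous and positive, tends to $1+\sqrt{2}$, and satisfies $h^2-2h+A=0$, since it equals $g_-$ on $(0,1]$ and $g_+$ on $[1,\infty)$. Yet $h(x)=x\neq 2-x=g_+(x)$ on $(0,1)$. The optional finite limit at $x_0^+$ holds for both functions, so it does not help. Hence (i) fails although (ii) holds.

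\textbf{Your remark about zeros of $D$.} You said branch switches can only happen at zeros of $D$, ``where the two branches coincide anyway''. That gets it backwards. The coincidence is exactly what lets a continuous selection change branch and produce a second admissible $g$.

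\textbf{The right repair.} Assume $D>0$ on $(x_0,\infty)$. This is implied by your $A<0$ but is strictly weaker. Under it, for any admissible $h$, the sets $\{h=g_+\}$ and $\{h=g_-\}$ are disjoint, relatively closed, and cover $(x_0,\infty)$. Since $g_-$ has a negative limit, $h=g_+$ for large $x$, and connectedness gives $h\equiv g_+$. This hypothesis also holds in the paper's radial application: there $B=N/r^2$ gives $D=N^2/r^4+4Q/(\sigma^4 r^2)>0$ even at zeros of $Q$, where $A<0$ would fail.

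\textbf{Direction (i)$\Rightarrow$(ii).} Your argument is correct and in fact sounder than the paper's. The paper derives (ii)(c) from the same flawed connectedness claim. You instead note that $g_+\ge g\ge 0$ and that $g_+\to\lambda_\ast$, since $D_\infty>B_\infty^2$ makes the other branch's limit $\tfrac12(-B_\infty-\sqrt{D_\infty})$ negative. So (c) holds with $\sigma=+1$ outright. The uniqueness in (i) is then needed only to conclude $g\equiv g_\ast$. You are also right that (ii)(b) is automatic from $A_\infty<0$.
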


\begin{proof}
We work on the open interval $(x_{0},\infty )$; the fact that $q_{2}(x_{0})=0
$ (and hence $A,B$ may be singular at $x_{0}$) does not affect the argument.

\medskip \noindent \emph{Step 1: $(i)\Rightarrow (ii)$.} Assume that there
exists a unique function%
\begin{equation*}
g:(x_{0},\infty )\rightarrow \lbrack 0,\infty ),
\end{equation*}%
continuous on $(x_{0},\infty )$, satisfying %
\eqref{eq:barrier-algebraic-q2zero} for all $x>x_{0}$ and%
\begin{equation*}
\lim_{x\rightarrow \infty }g(x)=\lambda _{\ast }\in (0,\infty ).
\end{equation*}

\smallskip \noindent \emph{(a) Nonnegativity of the discriminant.} For each
fixed $x>x_{0}$, the algebraic equation%
\begin{equation*}
g(x)^{2}+B(x)g(x)+A(x)=0
\end{equation*}%
is a quadratic equation in the unknown $g(x)$ with real coefficients. Since $%
g(x)\in \lbrack 0,\infty )$ is a real solution for every $x>x_{0}$, the
discriminant must be nonnegative:%
\begin{equation*}
D(x)=B(x)^{2}-4A(x)\geq 0,\qquad \forall x>x_{0}.
\end{equation*}%
This proves (ii)(a).

\smallskip \noindent \emph{(b) The limit quadratic and the positive root.}
Passing to the limit as $x\rightarrow \infty $ in %
\eqref{eq:barrier-algebraic-q2zero}, we use%
\begin{equation*}
\lim_{x\rightarrow \infty }g(x)=\lambda _{\ast },\quad \lim_{x\rightarrow
\infty }B(x)=B_{\infty },\quad \lim_{x\rightarrow \infty }A(x)=A_{\infty },
\end{equation*}%
to obtain%
\begin{equation*}
\lambda _{\ast }^{2}+B_{\infty }\lambda _{\ast }+A_{\infty }=0.
\end{equation*}%
Thus $\lambda _{\ast }$ is a root of \eqref{eq:limit-quadratic-q2zero}.
Since $A_{\infty }<0$, the product of the two roots is $A_{\infty }<0$, so
the two roots have opposite signs. Because $\lambda _{\ast }>0$, it is the
unique strictly positive root. This proves (ii)(b).

\smallskip \noindent \emph{(c) Identification of the branch.} For each $%
x>x_{0}$ with $D(x)\geq 0$, the two roots of%
\begin{equation*}
z^{2}+B(x)z+A(x)=0
\end{equation*}%
are%
\begin{equation*}
z_{\pm }(x)=\frac{-B(x)\pm \sqrt{D(x)}}{2}.
\end{equation*}%
Since $g(x)$ is a solution, we must have%
\begin{equation*}
g(x)\in \{z_{+}(x),z_{-}(x)\},\qquad \forall x>x_{0}.
\end{equation*}%
Define%
\begin{equation*}
\sigma (x):=\left\{ 
\begin{array}{cc}
+1 & \text{if }g(x)=\dfrac{-B(x)+\sqrt{D(x)}}{2}, \\ 
-1, & \text{if }g(x)=\dfrac{-B(x)-\sqrt{D(x)}}{2}.%
\end{array}%
\right. 
\end{equation*}%
Then%
\begin{equation*}
g(x)=\frac{-B(x)+\sigma (x)\sqrt{D(x)}}{2},\qquad \forall x>x_{0}.
\end{equation*}%
We claim that $\sigma (x)$ is constant on $(x_{0},\infty )$. Suppose, by
contradiction, that there exist $x_{1}<x_{2}$ with $x_{1},x_{2}>x_{0}$ such
that $\sigma (x_{1})\neq \sigma (x_{2})$. Since $g$ is continuous on $%
(x_{0},\infty )$ and $z_{+}(x)\neq z_{-}(x)$ whenever $D(x)>0$, a change of
branch would force $g$ to cross the other root at some intermediate point,
contradicting the uniqueness of the representation of $g(x)$ as a root of
the quadratic at that point. More formally, on any interval where $D(x)>0$,
the set%
\begin{equation*}
E:=\{x>x_{0}:\sigma (x)=+1\}
\end{equation*}%
is both open and closed (in the relative topology), and nonempty (because
the limit at infinity selects a definite branch). By connectedness of $%
(x_{0},\infty )$, $\sigma $ must be constant on $(x_{0},\infty )$. Thus
there exists $\sigma \in \{-1,+1\}$ such that%
\begin{equation*}
g(x)=\frac{-B(x)+\sigma \sqrt{D(x)}}{2},\qquad \forall x>x_{0}.
\end{equation*}%
Taking the limit as $x\rightarrow \infty $ in this identity and using $%
\lim_{x\rightarrow \infty }D(x)=B_{\infty }^{2}-4A_{\infty }>0$, we obtain%
\begin{equation*}
\lambda _{\ast }=\frac{-B_{\infty }+\sigma \sqrt{B_{\infty }^{2}-4A_{\infty }%
}}{2},
\end{equation*}%
so the same sign $\sigma $ selects the positive root $\lambda _{\ast }$ of %
\eqref{eq:limit-quadratic-q2zero}. Since $g(x)\geq 0$ for all $x>x_{0}$, we
also have%
\begin{equation*}
g(x)=\frac{-B(x)+\sigma \sqrt{D(x)}}{2}\geq 0,\qquad \forall x>x_{0}.
\end{equation*}%
Thus $g$ coincides with the function $g_{\ast }$ defined in %
\eqref{eq:g-star-def-q2zero}, and (ii)(c) holds.

\medskip \noindent \emph{Step 2: $(ii)\Rightarrow (i)$.} Assume now that
(ii)(a)--(c) hold. In particular, $D(x)\geq 0$ for all $x>x_{0}$, and the
function%
\begin{equation*}
g_{\ast }(x):=\frac{-B(x)+\sigma \sqrt{D(x)}}{2},\qquad x>x_{0},
\end{equation*}%
is well defined. Since $q_{0},q_{1},q_{2}$ are continuous on $[x_{0},\infty )
$ and $q_{2}(x)\neq 0$ for $x>x_{0}$, it follows that $A,B,D$ are continuous
on $(x_{0},\infty )$, and hence $\sqrt{D}$ is continuous on $(x_{0},\infty )$%
. Therefore $g_{\ast }$ is continuous on $(x_{0},\infty )$, satisfies%
\begin{equation*}
g_{\ast }(x)\geq 0,\qquad \forall x>x_{0},
\end{equation*}%
and%
\begin{equation*}
\lim_{x\rightarrow \infty }g_{\ast }(x)=\lambda _{\ast }>0,
\end{equation*}%
where $\lambda _{\ast }$ is the unique positive root of %
\eqref{eq:limit-quadratic-q2zero}.

\smallskip \noindent \emph{(a) $g_{\ast }$ solves the algebraic equation.}
By construction, for each $x>x_{0}$, $g_{\ast }(x)$ is one of the two roots
of%
\begin{equation*}
z^{2}+B(x)z+A(x)=0,
\end{equation*}%
hence%
\begin{equation*}
g_{\ast }(x)^{2}+B(x)g_{\ast }(x)+A(x)=0,\qquad \forall x>x_{0},
\end{equation*}%
so $g_{\ast }$ satisfies \eqref{eq:barrier-algebraic-q2zero}.

\smallskip \noindent \emph{(b) Continuity and limit at infinity.} As noted, $%
g_{\ast }$ is continuous on $(x_{0},\infty )$ and%
\begin{equation*}
\lim_{x\rightarrow \infty }g_{\ast }(x)=\lambda _{\ast }\in (0,\infty ).
\end{equation*}%
Thus $g_{\ast }$ satisfies all the requirements in (i), so existence is
proved.

\smallskip \noindent \emph{(c) Uniqueness.} Let $g:(x_{0},\infty
)\rightarrow \lbrack 0,\infty )$ be any other function satisfying the
conditions in (i): $g$ is continuous on $(x_{0},\infty )$, solves %
\eqref{eq:barrier-algebraic-q2zero} for all $x>x_{0}$, and%
\begin{equation*}
\lim_{x\rightarrow \infty }g(x)=\tilde{\lambda}\in (0,\infty ).
\end{equation*}%
Passing to the limit in \eqref{eq:barrier-algebraic-q2zero} as $x\rightarrow
\infty $, we obtain%
\begin{equation*}
\tilde{\lambda}^{2}+B_{\infty }\tilde{\lambda}+A_{\infty }=0,
\end{equation*}%
so $\tilde{\lambda}$ is a root of \eqref{eq:limit-quadratic-q2zero}. By
(ii)(b), the only positive root is $\lambda _{\ast }$, hence $\tilde{\lambda}%
=\lambda _{\ast }$.

For each $x>x_{0}$, $g(x)$ must be one of the two roots $z_{\pm }(x)$ of $%
z^{2}+B(x)z+A(x)=0$, so there exists a function 
\begin{equation*}
\tau :(x_{0},\infty )\rightarrow \{-1,+1\}
\end{equation*}%
such that%
\begin{equation*}
g(x)=\frac{-B(x)+\tau (x)\sqrt{D(x)}}{2},\qquad \forall x>x_{0}.
\end{equation*}%
Arguing as in Step~1(c), the continuity of $g$ and the fact that $D(x)\geq 0$
imply that $\tau (x)$ is constant on $(x_{0},\infty )$; denote this constant
by $\tau \in \{-1,+1\}$. Taking the limit as $x\rightarrow \infty $ yields%
\begin{equation*}
\lambda _{\ast }=\tilde{\lambda}=\frac{-B_{\infty }+\tau \sqrt{B_{\infty
}^{2}-4A_{\infty }}}{2}.
\end{equation*}%
But by (ii)(c), the positive root $\lambda _{\ast }$ is obtained precisely
by the choice $\sigma $, so necessarily $\tau =\sigma $. Hence%
\begin{equation*}
g(x)=\frac{-B(x)+\sigma \sqrt{D(x)}}{2}=g_{\ast }(x),\qquad \forall x>x_{0},
\end{equation*}%
which proves uniqueness. \medskip The proof is complete.
\end{proof}

\begin{remark}
The function $g$ plays the role of a moving equilibrium for the Riccati
flow: at each position $x$, the vector field $y\mapsto
q_{0}(x)+q_{1}(x)y+q_{2}(x)y^{2}$ vanishes precisely at $y=g(x)$.
Proposition~\ref{prop:asymptotics_gen} shows that when this equilibrium is
positive and monotone increasing, the solution starting from $y(x_{0})=0$ is
trapped below $g(x)$ for all $x>x_{0}$. In this sense, $g$ acts as a global,
dynamically invariant upper barrier for the Riccati trajectory.
\end{remark}

\begin{remark}
Although the qualitative behaviour of Riccati equations is classical, we are
not aware of any reference where the following global invariant region is
stated explicitly: whenever the algebraic equilibrium $g(x)$ exists, is
positive, and is monotonically increasing, the solution with initial
condition $y(x_{0})=0$ satisfies 
\begin{equation*}
0<y(x)<g(x),\qquad x>x_{0}.
\end{equation*}%
Classical Riccati theory ensures that solutions cannot cross an equilibrium
point, but the monotone barrier mechanism yielding the global bound $0<y<g$
appears to be new in this generality, especially for variable coefficients
and in applications to radial Schr\H{o}dinger/Riccati equation.
\end{remark}

\begin{remark}[Second-order linear reduction of the general Riccati equation]

\label{rem:general-Riccati-linearization} Consider the Riccati equation (\ref%
{R}) with coefficients 
\begin{equation*}
q_{0},q_{1},q_{2}\in C((x_{0},X)),\qquad q_{2}\in C^{1}((x_{0},X)),\qquad
q_{2}(x)\neq 0\ \text{for all }x\in (x_{0},X),
\end{equation*}%
where $X$ may be finite or $X=+\infty $.

The nonlinear Riccati equation \eqref{R} can be reduced to a linear second
\textquotedblleft order ordinary differential equation through a
logarithmic" type substitution.

More precisely, let $u\in C^{2}((x_{0},X))$ be a nonvanishing solution of 
\begin{equation}
u^{\prime \prime }(x)+a(x)\,u^{\prime }(x)+b(x)\,u(x)=0,  \label{L-general}
\end{equation}%
where 
\begin{equation}
a(x)=q_{1}(x)+\frac{q_{2}^{\prime }(x)}{q_{2}(x)},\qquad
b(x)=-\,q_{0}(x)\,q_{2}(x).  \label{ab-def}
\end{equation}%
Then the function 
\begin{equation}
y(x)=-\,\frac{1}{q_{2}(x)}\,\frac{u^{\prime }(x)}{u(x)}  \label{y-from-u}
\end{equation}%
belongs to $C^{1}((x_{0},X))$ and satisfies the Riccati equation \eqref{R}
on $(x_{0},X)$.

Conversely, if $y\in C^{1}((x_{0},X))$ is a solution of \eqref{R}, define 
\begin{equation}
u(x)=\exp \!\Big(-\int_{x_{0}}^{x}q_{2}(s)\,y(s)\,ds\Big).  \label{u-from-y}
\end{equation}%
Then $u\in C^{2}((x_{0},X))$ is strictly positive on $(x_{0},X)$, solves the
linear equation \eqref{L-general} with coefficients given by \eqref{ab-def},
and satisfies the identity \eqref{y-from-u}.

Thus, under the structural assumptions above, the nonlinear Riccati flow %
\eqref{R} is fully equivalent to the linear second-order ODE %
\eqref{L-general}.
\end{remark}

\begin{remark}[Parabolic linearization of the general Riccati flow]
\label{rem:parabolic-linearization} Consider the general Riccati equation 
\begin{equation}
y^{\prime }(x)\;=\;q_{0}(x)+q_{1}(x)\,y(x)+q_{2}(x)\,y(x)^{2},  \label{RR}
\end{equation}%
with $q_{0},q_{1},q_{2}\in C((x_{0},\infty ))$, $q_{2}\in C^{1}((x_{0},\infty ))$ and $q_{2}(x)\neq 0$ on $%
(x_{0},\infty )$. By analogy with Remark~\ref%
{rem:general-Riccati-linearization}, \eqref{R} can be embedded into a linear
second order parabolic equation in two variables. More precisely, define $%
U=U(x,t)$ as a (nonvanishing) solution of 
\begin{equation}
\partial _{t}U\;=\;\partial _{xx}U\;+\;a(x)\,\partial _{x}U\;+\;b(x)\,U,
\label{PDE-parabolic}
\end{equation}%
where%
\begin{equation*}
a(x)\;=\;q_{1}(x)+\frac{q_{2}^{\prime }(x)}{q_{2}(x)},\qquad
b(x)\;=\;-\,q_{0}(x)\,q_{2}(x).
\end{equation*}%
Then any stationary profile $U(x)$ of \eqref{PDE-parabolic} (i.e.\
independent of $t$) generates a solution of the Riccati equation \eqref{R}
via the logarithmic-type transformation 
\begin{equation}
y(x)\;=\;-\,\frac{1}{q_{2}(x)}\,\frac{\partial _{x}U(x)}{U(x)}.
\label{Riccati-from-U}
\end{equation}%
Conversely, any $C^{1}$ solution $y$ of \eqref{R} on $(x_{0},\infty )$ gives
rise locally to a nonvanishing solution $U$ of \eqref{PDE-parabolic} by
solving%
\begin{equation*}
\partial _{x}U(x,t)\;=\;-\,q_{2}(x)\,y(x)\,U(x,t),
\end{equation*}%
for each fixed $t$. In this sense, the Riccati flow \eqref{R} can be viewed
as the stationary reduction of the linear parabolic equation %
\eqref{PDE-parabolic}, extending the ODE level correspondence of Remark~\ref%
{rem:general-Riccati-linearization} to a PDE framework. This mathematical
connection provides the theoretical foundation for the Wick-rotated
Time-Dependent Schr\"{o}dinger Equation, as will be explored in Section \ref{sec:tdse}.
\end{remark}

\subsection{Existence theory for the second-order linear reduction of the
general Riccati equation \label{schrodinger}} 

\begin{theorem}[Existence and uniqueness for the linear problem 
\eqref{L-general}]
\label{thm:existence-uniqueness-Lgeneral} Let $X\in (0,\infty ]$ and let 
\begin{equation*}
a,b\in C\big(\lbrack 0,X)\big),\text{ }b\leq 0\text{ on }[0,X)\text{ and }a%
\text{ possible singular at }0\text{.}
\end{equation*}%
Consider the linear second order ordinary differential equation (\ref%
{L-general}) together with the initial conditions 
\begin{equation}
u(0)=\alpha >0,\qquad u^{\prime }(0)=0.  \label{IC-Lgeneral}
\end{equation}%
Then the following assertions hold:

\begin{enumerate}
\item[(i)] There exists a unique solution 
\begin{equation*}
u\in C^{2}\big(\lbrack 0,X_{\max })\big),
\end{equation*}%
of \eqref{L-general}--\eqref{IC-Lgeneral}, defined on a maximal interval $%
[0,X_{\max })$ with $0<X_{\max }\leq X$. This solution is unique in the
sense that any other $C^{2}$ solution of \eqref{L-general} satisfying %
\eqref{IC-Lgeneral} on some interval $[0,\delta )$ coincides with $u$ on the
common domain of definition.

\item[(ii)] If $X<\infty $ and the maximal interval is $[0,X)$, and if in
addition $u$ admits a finite limit 
\begin{equation*}
\lim_{x\rightarrow X^{-}}u(x)=g>\alpha ,
\end{equation*}%
then there exists a unique function 
\begin{equation*}
\tilde{u}\in C^{2}\big(\lbrack 0,X]\big)
\end{equation*}%
solving \eqref{L-general} on $(0,X)$ and satisfying 
\begin{equation*}
\tilde{u}(0)=\alpha ,\quad \tilde{u}^{\prime }(0)=0,\quad \tilde{u}%
(X)=g>\alpha .
\end{equation*}%
In particular, the boundary value problem with data $(u(0),u^{\prime
}(0),u(X))=(\alpha ,0,g)$ admits at most one classical solution.

\item[(iii)] If $X=\infty $, the maximal interval is $[0,\infty )$, and if 
\begin{equation*}
-\int_{0}^{\infty }e^{-\int_{0}^{y}a(s)\,ds}\left(
\int_{0}^{y}b(t)e^{\int_{0}^{t}a(s)\,ds}u(t)\,dt\right) dy=\infty ,
\end{equation*}%
then, there exists a unique $u$ regard as the unique solution of %
\eqref{L-general}--\eqref{IC-Lgeneral} with the additional growth condition $%
u(x)\rightarrow \infty $ as $x\rightarrow \infty $.
\end{enumerate}

In all cases, the linear structure and the continuity of the coefficients
ensure that the solution is completely determined by the initial data %
\eqref{IC-Lgeneral}, and any further condition at $x=X$ (finite or infinite)
can be satisfied by at most one such solution.
\end{theorem}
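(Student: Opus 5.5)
The plan is to recast \eqref{L-general}--\eqref{IC-Lgeneral} as a Volterra integral equation and solve it by successive approximations. The difficulty at $x=0$ comes from $a$, which may be singular there (in the radial case $a(x)=(N-1)/x$). So I will use the integrating factor $p(x):=e^{\int_{0}^{x}a(s)\,ds}$, interpreted through the regular Frobenius branch when $a\sim (N-1)/x$. In that case one replaces $p$ by $x^{N-1}$ up to a harmless multiplicative factor, exactly as in the indicial analysis of Section~\ref{sec:notation}. The equation becomes $(p u')'=-b\,p\,u$. Integrating twice and using $u'(0)=0$ gives
\[
u(x)=\alpha-\int_{0}^{x}\frac{1}{p(y)}\int_{0}^{y}b(t)\,p(t)\,u(t)\,dt\,dy=:\alpha+\mathcal{T}[u](x).
\]
The key kernel estimate is $K(x,t):=p(t)\int_{t}^{x}p(y)^{-1}dy$ for $0\le t\le x$. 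In the radial model $p=x^{N-1}$ it satisfies $K(x,t)\le x$ for $N\ge2$ and $K(x,t)\le x-t$ for $N=1$. Hence $K$ is bounded on compact sets $[0,X']\subset[0,X)$.

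For assertion (i) I will run Picard iteration, $u_{0}\equiv\alpha$ and $u_{k+1}=\alpha+\mathcal{T}[u_{k}]$, on $[0,X']$. The kernel bound gives
\[
|u_{k+1}-u_{k}|(x)\le \frac{(M\|b\|_{\infty}x)^{k}}{k!}\,\alpha ,
\]
with $M$ the bound on $K$. So the series converges uniformly on every compact subinterval. Because $X'<X$ is arbitrary and the equation is linear, there is no blow-up, so the solution extends to all of $[0,X)$, i.e.\ $X_{\max}=X$. This is stronger than stated.

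Regularity then follows from the integral equation. We get $u'(x)=-p(x)^{-1}\int_{0}^{x}bpu$, and in the singular case a l'H\^{o}pital-type computation gives $u'(x)=O(x)$ and $u''(0)=-b(0)\alpha/N$. Therefore $u\in C^{2}([0,X))$ and $u'(0)=0$. Uniqueness follows from a Gronwall inequality applied to the difference of two solutions, which satisfies the homogeneous Volterra equation with the same bounded kernel.

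I will also record monotonicity, because (ii) and (iii) use it. Since $b\le0$, the formula for $u'$ and a first-exit argument show that $u>0$ and $u'\ge0$ as long as $u>0$, and they hold from the start. Hence $u\ge\alpha$ is nondecreasing.

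For (ii), with $X<\infty$ and $u(x)\to g$, I define $\tilde u(X):=g$. Monotonicity and the integral formula show that $u'$ has a limit at $X^{-}$, and then the ODE shows $u''$ does too, provided $a,b$ extend continuously to $X$. This extension is an implicit hypothesis I would state explicitly. It yields $\tilde u\in C^{2}([0,X])$. Uniqueness is inherited from (i): any classical solution with data $(\alpha,0)$ at $0$ coincides with $u$, so its value at $X$ is forced. That $g>\alpha$ is consistent with $u$ nondecreasing, with strict inequality unless $b\equiv0$.

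For (iii), substituting $u$ into the integral identity gives
\[
u(x)=\alpha-\int_{0}^{x}e^{-\int_{0}^{y}a}\int_{0}^{y}b\,e^{\int_{0}^{t}a}\,u\,dt\,dy.
\]
The divergence hypothesis is exactly that the right-hand side tends to $+\infty$, so $u(x)\to\infty$. Uniqueness again comes from (i), since the initial data alone determine $u$ and the growth condition is automatically satisfied.

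The main obstacle I expect is the behaviour at the singular endpoint: making sense of $e^{\int_{0}^{x}a}$ when $a$ is not integrable at $0$, and proving the uniform kernel bound and the $C^{2}$ regularity at $x=0$. I would treat this first through the model $a=(N-1)/x+\tilde a$ with $\tilde a$ continuous, normalising $p(x)=x^{N-1}e^{\int_{0}^{x}\tilde a}$, which is the case used in the radial sections.
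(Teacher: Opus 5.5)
Your proposal is correct and follows essentially the same route as the paper: an integrating factor, reduction to a Volterra equation of the second kind, successive approximations, $C^2$ regularity by differentiating $u'=-p^{-1}\int_0^x bpu$, uniqueness via the homogeneous Volterra equation, and (ii)--(iii) read off from the limits of $u',u''$ at $X^-$ and from the double-integral identity. Your version also improves on the paper at several points: the factorial Picard bound gives convergence on every compact $[0,X']$ and hence $X_{\max}=X$; you normalise $p(x)=x^{N-1}e^{\int_0^x\tilde a}$ for the singular case; you prove $u\ge\alpha$ by a first-exit argument; and you state explicitly that (ii) needs $a,b$ to extend continuously to $X$. The paper's kernel constant $e^{2\sup|A|}$ is infinite as soon as $a$ is genuinely singular at $0$, so its argument only covers continuous $a$.
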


\begin{proof}[Proof of Theorem \protect\ref%
{thm:existence-uniqueness-Lgeneral} by successive approximations]

We are given $X\in (0,\infty ]$ and%
\begin{equation*}
a,b\in C([0,X)).
\end{equation*}%
We consider 
\begin{equation}
u^{\prime \prime }(x)+a(x)u^{\prime }(x)+b(x)u(x)=0,\qquad 0\leq x<X,
\label{L-general-proof}
\end{equation}%
with 
\begin{equation}
u(0)=\alpha >0,\qquad u^{\prime }(0)=0.  \label{IC-Lgeneral-proof}
\end{equation}

\noindent\textbf{Step 1. Integrating factor and first integral form.}

Define%
\begin{equation*}
A(x):=\int_{0}^{x}a(s)\,ds,\qquad m(x):=e^{A(x)},\qquad x\in \lbrack 0,X).
\end{equation*}%
Then 
\begin{equation*}
m\in C^{1}([0,X))\text{ and }m^{\prime }(x)=a(x)m(x).
\end{equation*}%
Multiplying \eqref{L-general-proof} by $m(x)$ gives%
\begin{equation*}
m(x)u^{\prime \prime }(x)+a(x)m(x)u^{\prime }(x)+b(x)m(x)u(x)=0.
\end{equation*}%
Since%
\begin{equation*}
\big(m(x)u^{\prime }(x)\big)^{\prime }=m^{\prime }(x)u^{\prime
}(x)+m(x)u^{\prime \prime }(x)=a(x)m(x)u^{\prime }(x)+m(x)u^{\prime \prime
}(x),
\end{equation*}%
we obtain 
\begin{equation}
\big(m(x)u^{\prime }(x)\big)^{\prime }=-b(x)m(x)u(x),\qquad 0\leq x<X.
\label{eq-prim-proof}
\end{equation}%
Integrating \eqref{eq-prim-proof} from $0$ to $x\in \lbrack 0,X)$ and using $%
u^{\prime }(0)=0$, $m(0)=1$, we get 
\begin{equation}
m(x)u^{\prime }(x)=-\int_{0}^{x}b(t)e^{\int_{0}^{t}a(s)\,ds}u(t)\,dt.
\label{u-prim-int-proof}
\end{equation}%
Dividing by $m(x)=e^{\int_{0}^{x}a(s)\,ds}$, 
\begin{equation}
u^{\prime
}(x)=e^{-\int_{0}^{x}a(s)\,ds}\int_{0}^{x}b(t)e^{\int_{0}^{t}a(s)\,ds}u(t)%
\,dt,\qquad 0\leq x<X.  \label{u-prim-proof}
\end{equation}%
Integrating \eqref{u-prim-proof} from $0$ to $x$ and using $u(0)=\alpha $
yields 
\begin{equation}
u(x)=\alpha -\int_{0}^{x}e^{-\int_{0}^{y}a(s)\,ds}\left(
\int_{0}^{y}b(t)e^{\int_{0}^{t}a(s)\,ds}u(t)\,dt\right) dy,\qquad 0\leq x<X.
\label{eq-int-dubla-proof}
\end{equation}

\noindent \textbf{Step 2. Volterra integral equation.}

We now rewrite \eqref{eq-int-dubla-proof} as a Volterra integral equation of
the second kind. Consider the domain%
\begin{equation*}
\{(y,t)\colon 0\leq t\leq y\leq x\}.
\end{equation*}%
Changing the order of integration in the double integral in %
\eqref{eq-int-dubla-proof} gives 
\begin{align*}
& \int_{0}^{x}e^{-\int_{0}^{y}a(s)\,ds}\left(
\int_{0}^{y}b(t)e^{\int_{0}^{t}a(s)\,ds}u(t)\,dt\right) dy \\
& \quad
=\int_{0}^{x}\int_{0}^{y}e^{-\int_{0}^{y}a(s)\,ds}b(t)e^{\int_{0}^{t}a(s)%
\,ds}u(t)\,dt\,dy \\
& \quad =\int_{0}^{x}\left( \int_{t}^{x}e^{-\int_{0}^{y}a(s)\,ds}\,dy\right)
b(t)e^{\int_{0}^{t}a(s)\,ds}u(t)\,dt.
\end{align*}%
Define the kernel 
\begin{equation}
K(x,t):=b(t)e^{\int_{0}^{t}a(s)\,ds}\int_{t}^{x}e^{-\int_{0}^{y}a(s)\,ds}%
\,dy,\qquad 0\leq t\leq x<X.  \label{K-def}
\end{equation}%
Then \eqref{eq-int-dubla-proof} becomes 
\begin{equation}
u(x)=\alpha -\int_{0}^{x}K(x,t)u(t)\,dt,\qquad 0\leq x<X.
\label{Volterra-correct}
\end{equation}%
This is the Volterra integral equation of the second kind associated with %
\eqref{L-general-proof}-\eqref{IC-Lgeneral-proof}.\medskip

\noindent\textbf{Step 3. Regularity and bounds for the kernel $K$.}

Fix $R\in (0,X)$ arbitrary. On the triangle%
\begin{equation*}
\Delta _{R}:=\{(x,t)\colon 0\leq t\leq x\leq R\},
\end{equation*}%
the functions $a,b$ are continuous, hence $A(x)=\int_{0}^{x}a(s)\,ds$ is
continuous, and the exponentials $e^{\pm A(\cdot )}$ are continuous and
bounded.

Define%
\begin{equation*}
M_{a}(R):=\sup_{y\in \lbrack 0,R]}|A(y)|,\qquad M_{b}(R):=\sup_{t\in \lbrack
0,R]}|b(t)|.
\end{equation*}%
Then for $0\leq t\leq y\leq R$,%
\begin{equation*}
e^{-\int_{0}^{y}a(s)\,ds}=e^{-A(y)},\quad e^{\int_{0}^{t}a(s)\,ds}=e^{A(t)},
\end{equation*}%
and%
\begin{equation*}
\left\vert e^{A(t)}e^{-A(y)}\right\vert =e^{A(t)-A(y)}\leq
e^{|A(t)|+|A(y)|}\leq e^{2M_{a}(R)}.
\end{equation*}%
Hence, for $(x,t)\in \Delta _{R}$, 
\begin{align*}
|K(x,t)|& =\left\vert b(t)e^{A(t)}\int_{t}^{x}e^{-A(y)}\,dy\right\vert \\
& \leq M_{b}(R)e^{M_{a}(R)}\int_{t}^{x}e^{|A(y)|}\,dy \\
& \leq M_{b}(R)e^{M_{a}(R)}e^{M_{a}(R)}(x-t) \\
& =C_{R}(x-t),
\end{align*}%
where%
\begin{equation*}
C_{R}:=M_{b}(R)e^{2M_{a}(R)}.
\end{equation*}%
In particular, $K$ is continuous on $\Delta _{R}$ and satisfies the linear
growth bound 
\begin{equation}
|K(x,t)|\leq C_{R}(x-t),\qquad 0\leq t\leq x\leq R.  \label{K-bound}
\end{equation}

\noindent\textbf{Step 4. Successive approximations for the correct Volterra
equation.}

We now solve \eqref{Volterra-correct} on $[0,R]$ by successive
approximations.

Define $\{u_{n}\}_{n\geq 0}$ on $[0,R]$ by 
\begin{equation}
\left\{ 
\begin{array}{l}
u_{0}=\alpha ,\qquad 0\leq x<X \\ 
u_{n}(x)=\alpha -\int_{0}^{x}K(x,t)u_{n-1}(t)\,dt,\qquad 0\leq x<X\text{, }%
n\geq 1\text{.}%
\end{array}%
\right.  \label{def-un-correct}
\end{equation}%
Set%
\begin{equation*}
d_{n}(x):=u_{n}(x)-u_{n-1}(x),\qquad n\geq 1,
\end{equation*}%
and%
\begin{equation*}
D_{n}:=\sup_{x\in \lbrack 0,R]}|d_{n}(x)|.
\end{equation*}%
From \eqref{def-un-correct},%
\begin{equation*}
d_{1}(x)=u_{1}(x)-u_{0}(x)=-\int_{0}^{x}K(x,t)u_{0}(t)\,dt=-\alpha
\int_{0}^{x}K(x,t)\,dt.
\end{equation*}%
Using \eqref{K-bound},%
\begin{equation*}
|d_{1}(x)|\leq \alpha \int_{0}^{x}|K(x,t)|\,dt\leq \alpha
\int_{0}^{x}C_{R}(x-t)\,dt=\alpha C_{R}\frac{x^{2}}{2}\leq \alpha C_{R}\frac{%
R^{2}}{2},
\end{equation*}%
so%
\begin{equation*}
D_{1}\leq \alpha C_{R}\frac{R^{2}}{2}.
\end{equation*}%
For $n\geq 2$, from \eqref{def-un-correct},%
\begin{equation*}
d_{n}(x)=-\int_{0}^{x}K(x,t)\big(u_{n-1}(t)-u_{n-2}(t)\big)%
\,dt=-\int_{0}^{x}K(x,t)d_{n-1}(t)\,dt,
\end{equation*}%
hence%
\begin{equation*}
|d_{n}(x)|\leq \int_{0}^{x}|K(x,t)|\,|d_{n-1}(t)|\,dt\leq
\int_{0}^{x}C_{R}(x-t)\sup_{0\leq s\leq t}|d_{n-1}(s)|\,dt.
\end{equation*}%
Thus%
\begin{equation*}
D_{n}\leq C_{R}\int_{0}^{R}(R-t)D_{n-1}\,dt=C_{R}D_{n-1}\frac{R^{2}}{2}%
,\qquad n\geq 2.
\end{equation*}%
By induction,%
\begin{equation*}
D_{n}\leq \alpha \left( C_{R}\frac{R^{2}}{2}\right) ^{n},\qquad n\geq 1.
\end{equation*}%
If we choose $R>0$ small enough so that%
\begin{equation*}
q_{R}:=C_{R}\frac{R^{2}}{2}<1,
\end{equation*}%
then%
\begin{equation*}
D_{n}\leq \alpha q_{R}^{n},\qquad n\geq 1,
\end{equation*}%
and the series $\sum_{n=1}^{\infty }D_{n}$ converges. Therefore $%
\sum_{n=1}^{\infty }d_{n}$ converges uniformly on $[0,R]$, and%
\begin{equation*}
u_{n}(x)=u_{0}(x)+\sum_{k=1}^{n}d_{k}(x)
\end{equation*}%
converges uniformly on $[0,R]$ to a continuous function%
\begin{equation*}
u\in C([0,R]),\qquad u(x):=\lim_{n\rightarrow \infty }u_{n}(x).
\end{equation*}%
Passing to the limit in \eqref{def-un-correct} and using uniform convergence
plus continuity of $K$, we obtain 
\begin{equation}
u(x)=\alpha -\int_{0}^{x}K(x,t)u(t)\,dt,\qquad x\in \lbrack 0,R],
\label{Volterra-final}
\end{equation}%
i.e.\ $u$ solves the correct Volterra integral equation %
\eqref{Volterra-correct} on $[0,R]$.

\noindent\textbf{Step 5. $C^2$-regularity and recovery of the ODE.}

We now show that $u\in C^{2}([0,R])$ and satisfies \eqref{L-general-proof}-%
\eqref{IC-Lgeneral-proof}. First, note that $K$ is continuous on $\Delta
_{R} $ and, by \eqref{K-def},%
\begin{equation*}
K(x,x)=b(x)e^{A(x)}\int_{x}^{x}e^{-A(y)}\,dy=0.
\end{equation*}%
Moreover, for $0\leq t<x\leq R$,%
\begin{equation*}
\partial _{x}K(x,t)=b(t)e^{A(t)}\frac{d}{dx}\left(
\int_{t}^{x}e^{-A(y)}\,dy\right) =b(t)e^{A(t)}e^{-A(x)}.
\end{equation*}%
Since $a,b$ are continuous, $A$ is continuous, and thus $\partial _{x}K$
extends continuously to $\Delta _{R}$ (including the diagonal $x=t$). From %
\eqref{Volterra-final},%
\begin{equation*}
u(x)=\alpha -\int_{0}^{x}K(x,t)u(t)\,dt.
\end{equation*}%
Differentiating with respect to $x$ and using Leibniz' rule (justified by
continuity of $K$, $\partial _{x}K$, and $u$),%
\begin{equation*}
u^{\prime }(x)=-\int_{0}^{x}\partial
_{x}K(x,t)u(t)\,dt-K(x,x)u(x)=-\int_{0}^{x}\partial _{x}K(x,t)u(t)\,dt,
\end{equation*}%
since $K(x,x)=0$. Using the expression for $\partial _{x}K$,%
\begin{equation*}
u^{\prime
}(x)=-\int_{0}^{x}b(t)e^{A(t)}e^{-A(x)}u(t)\,dt=-e^{-A(x)}%
\int_{0}^{x}b(t)e^{A(t)}u(t)\,dt,
\end{equation*}%
which is exactly \eqref{u-prim-proof}. In particular, $u^{\prime }\in
C([0,R])$, so $u\in C^{1}([0,R])$. Differentiating once more,%
\begin{equation*}
u^{\prime \prime }(x)=\frac{d}{dx}\left[ -e^{-A(x)}%
\int_{0}^{x}b(t)e^{A(t)}u(t)\,dt\right] .
\end{equation*}%
Using the product rule and the fundamental theorem of calculus, 
\begin{align*}
u^{\prime \prime }&
=e^{-A(x)}\int_{0}^{x}b(t)e^{A(t)}u(t)\,dt-e^{-A(x)}b(x)e^{A(x)}u(x) \\
& =a(x)u^{\prime }(x)-b(x)u(x),
\end{align*}%
where we used \eqref{u-prim-proof} in the first term. Rearranging,%
\begin{equation*}
u^{\prime \prime }(x)+a(x)u^{\prime }(x)+b(x)u(x)=0,\qquad x\in (0,R).
\end{equation*}%
Finally, from \eqref{Volterra-final} at $x=0$,%
\begin{equation*}
u(0)=\alpha -\int_{0}^{0}(\cdots )=\alpha ,
\end{equation*}%
and from the formula for $u^{\prime }$,%
\begin{equation*}
u^{\prime }\left( 0\right) =-e^{-A(0)}\int_{0}^{0}(\cdots )=0.
\end{equation*}%
Thus $u$ satisfies \eqref{L-general-proof}-\eqref{IC-Lgeneral-proof} on $%
[0,R]$, with $u\in C^{2}([0,R])$.\medskip

\noindent\textbf{Step 6. Extension to the maximal interval and uniqueness
(part (i)).}

The above construction can be repeated on successive intervals $[0,R_{1}]$, $%
[R_{1},R_{2}]$, etc., or equivalently we can use standard continuation
arguments for Volterra equations (or ODEs) to extend the solution as long as 
$a,b$ remain continuous. Since $a,b\in C([0,X))$, we obtain a solution%
\begin{equation*}
u\in C^{2}([0,X_{\max })),
\end{equation*}%
defined on a maximal interval $[0,X_{\max })$ with $0<X_{\max }\leq X$,
satisfying \eqref{L-general-proof}-\eqref{IC-Lgeneral-proof}. This proves
existence in (i).

For uniqueness, let $v\in C^{2}([0,R])$ be another solution of %
\eqref{L-general-proof}-\eqref{IC-Lgeneral-proof}. Repeating Steps 1-5 for $%
v $, we see that $v$ satisfies the same Volterra equation %
\eqref{Volterra-final} on $[0,R]$. Then $w:=u-v$ satisfies%
\begin{equation*}
w(x)=-\int_{0}^{x}K(x,t)w(t)\,dt,\qquad x\in \lbrack 0,R].
\end{equation*}%
Repeating the estimates of Step 4 for $w$ (with $w_{0}\equiv 0$), we obtain $%
w\equiv 0$ on $[0,R]$. Hence $u\equiv v$ on $[0,R]$. By continuation,
uniqueness holds on the whole maximal interval $[0,X_{\max })$. This
completes the proof of (i).

\noindent\textbf{Step 7. Case $X<\infty$, finite limit at $X$ (part (ii)).}

Assume now $X<\infty $, the maximal interval is $[0,X_{\max })=[0,X)$, and%
\begin{equation*}
\lim_{x\rightarrow X^{-}}u(x)=g>\alpha .
\end{equation*}%
From \eqref{u-prim-proof},%
\begin{equation*}
u^{\prime }(x)=-e^{-A(x)}\int_{0}^{x}b(t)e^{A(t)}u(t)\,dt.
\end{equation*}%
The integrand $b(t)e^{A(t)}u(t)$ is continuous on $[0,X)$ and, using the
existence of $\lim_{x\rightarrow X^{-}}u(x)=g$ and continuity of $a,b$,
extends continuously to $t=X$. Hence the integral has a finite limit as $%
x\rightarrow X^{-}$, and so%
\begin{equation*}
g_{1}:=\lim_{x\rightarrow X^{-}}u^{\prime }(x)\in \mathbb{R}.
\end{equation*}%
From the differential equation%
\begin{equation*}
u^{\prime \prime }(x)=-a(x)u^{\prime }(x)-b(x)u(x),
\end{equation*}%
and continuity of $a,b$ plus existence of the limits of $u,u^{\prime }$ at $%
X^{-}$, we obtain%
\begin{equation*}
g_{2}:=\lim_{x\rightarrow X^{-}}u^{\prime \prime }(x)=-a(X)g_{1}-b(X)g\in 
\mathbb{R}.
\end{equation*}%
Define%
\begin{equation*}
\tilde{u}(x):=\left\{ 
\begin{array}{ccc}
u\left( x\right) & \text{if} & x<X \\ 
g & if & x=X%
\end{array}%
\right.
\end{equation*}%
and set%
\begin{equation*}
\tilde{u}^{\prime }(X):=g_{1},\qquad \tilde{u}^{\prime \prime }(X):=g_{2}.
\end{equation*}%
Then $\tilde{u}\in C^{2}([0,X])$, satisfies \eqref{L-general-proof} on $%
(0,X) $ and%
\begin{equation*}
\tilde{u}(0)=\alpha ,\quad \tilde{u}^{\prime }(0)=0,\quad \tilde{u}(X)=g.
\end{equation*}%
If $\tilde{u}_{1},\tilde{u}_{2}\in C^{2}([0,X])$ both solve %
\eqref{L-general-proof} on $(0,X)$ and satisfy%
\begin{equation*}
\tilde{u}_{j}(0)=\alpha ,\quad \tilde{u}_{j}^{\prime }(0)=0,\quad \tilde{u}%
_{j}(X)=g,\quad j=1,2,
\end{equation*}%
then their restrictions to $[0,X)$ are two solutions of %
\eqref{L-general-proof}-\eqref{IC-Lgeneral-proof}, hence coincide on $[0,X)$
by (i), and therefore also at $X$ by continuity. This proves (ii).\medskip

\noindent\textbf{Step 8. Case $X=\infty$ and growth at infinity (part (iii)).%
}

Assume now $X=\infty $, the maximal interval is $[0,\infty )$, and%
\begin{equation*}
-\int_{0}^{\infty }e^{-\int_{0}^{y}a(s)\,ds}\left(
\int_{0}^{y}b(t)e^{\int_{0}^{t}a(s)\,ds}u(t)\,dt\right) dy=\infty ,
\end{equation*}%
and%
\begin{equation*}
\lim_{x\rightarrow \infty }u(x)=+\infty .
\end{equation*}%
From \eqref{eq-int-dubla-proof},%
\begin{equation*}
u(x)=u(0)-\int_{0}^{x}e^{-\int_{0}^{y}a(s)\,ds}\left(
\int_{0}^{y}b(t)e^{\int_{0}^{t}a(s)\,ds}u(t)\,dt\right) dy.
\end{equation*}%
In addition, $b(t)\leq 0$ on $[0,\infty )$ (so that $-b(t)\geq 0$) and $%
u(t)\geq \alpha >0$, then%
\begin{equation*}
u(x)\geq u(0)-u(0)\int_{0}^{x}e^{-\int_{0}^{y}a(s)\,ds}\left(
\int_{0}^{y}b(t)e^{\int_{0}^{t}a(s)\,ds}\,dt\right) dy.
\end{equation*}%
Since $e^{-\int_{0}^{y}a(s)\,ds}$ is bounded below by a positive constant on
each finite interval and%
\begin{equation*}
-\int_{0}^{\infty }e^{-\int_{0}^{y}a(s)\,ds}\left(
\int_{0}^{y}b(t)e^{\int_{0}^{t}a(s)\,ds}u(t)\,dt\right) dy=\infty ,
\end{equation*}%
the right-hand side tends to $+\infty $ as $x\rightarrow \infty $, hence $%
u(x)\rightarrow \infty $.

In the statement of the theorem, this is summarized by saying that if $%
X=\infty $, the maximal interval is $[0,\infty )$ and%
\begin{equation*}
\lim_{x\rightarrow \infty }u(x)=+\infty ,
\end{equation*}%
we may write $g=+\infty $ and regard $u$ as the unique solution of %
\eqref{L-general-proof}-\eqref{IC-Lgeneral-proof} with the additional growth
condition $u(x)\rightarrow \infty $ as $x\rightarrow \infty $.

Uniqueness is still governed by (i): if $u_{1},u_{2}$ are two solutions of %
\eqref{L-general-proof}-\eqref{IC-Lgeneral-proof} on $[0,\infty )$, then $%
u_{1}\equiv u_{2}$ on $[0,\infty )$, so the growth condition at infinity can
be satisfied by at most one such solution. This proves (iii).

\noindent \textbf{Final conclusion.} The corrected Volterra integral
equation \eqref{Volterra-correct}, with the kernel $K(x,t)$ defined in %
\eqref{K-def}, admits a unique $C^{2}$ solution obtained by successive
approximations. This solution is exactly the unique solution of the linear
ODE \eqref{L-general-proof} with initial data \eqref{IC-Lgeneral-proof}, and
the extensions and growth properties in (ii)-(iii) follow as above. Hence
Theorem \ref{thm:existence-uniqueness-Lgeneral} is proved in a fully
rigorous way.
\end{proof}

\begin{remark}[HJB type nonlinear reduction of the linear equation 
\eqref{L-general}]
\label{rem:HJB-from-Lgeneral} Let $\sigma >0$ be fixed and consider the
linear second order ODE (\ref{L-general}) with $a,b\in C((0,X))$. Assume
that $u\in C^{2}((0,X))$ is strictly positive and strictly increasing on $%
(0,X)$. Define the logarithmic change of variable 
\begin{equation}
z(x)\;=\;-\,2\sigma ^{2}\,\ln u(x).  \label{z-def-general}
\end{equation}%
Then $z\in C^{2}((0,X))$ and satisfies the nonlinear second order equation 
\begin{equation}
z^{\prime \prime }(x)\;+\;a(x)\,z^{\prime }(x)\;-\;\frac{1}{2\sigma ^{2}}%
\big(z^{\prime }(x)\big)^{2}\;-\;2\sigma ^{2}\,b(x)\;=\;0,\qquad x\in (0,X).
\label{HJB-from-Lgeneral}
\end{equation}%
Indeed, differentiating \eqref{z-def-general} gives 
\begin{equation*}
z^{\prime }=-2\sigma ^{2}\,\frac{u^{\prime }(x)}{u(x)},\qquad z^{\prime
\prime }=-2\sigma ^{2}\left( \frac{u^{\prime \prime }(x)}{u(x)}-\Big(\frac{%
u^{\prime }(x)}{u(x)}\Big)^{2}\right) .
\end{equation*}%
From \eqref{z-def-general} we also have 
\begin{equation*}
\frac{u^{\prime }(x)}{u(x)}\;=\;-\,\frac{1}{2\sigma ^{2}}\,z^{\prime
}(x),\qquad \frac{u^{\prime \prime }(x)}{u(x)}\;=\;-\,\frac{1}{2\sigma ^{2}}%
\,z^{\prime \prime }(x)+\frac{1}{4\sigma ^{4}}\big(z^{\prime }(x)\big)^{2}.
\end{equation*}%
Substituting these expressions into \eqref{L-general} and simplifying yields %
\eqref{HJB-from-Lgeneral}.

Conversely, let $z\in C^{2}((0,X))$ be a solution of %
\eqref{HJB-from-Lgeneral} such that 
\begin{equation*}
u(x)\;=\;\exp \!\Big(-\tfrac{1}{2\sigma ^{2}}\,z(x)\Big)
\end{equation*}%
is strictly positive and strictly increasing on $(0,X)$. Then $u\in
C^{2}((0,X))$ and a direct substitution shows that $u$ satisfies the linear
equation \eqref{L-general}. In this way, the logarithmic transformation %
\eqref{z-def-general} establishes a two "way correspondence between the
linear second" order ODE \eqref{L-general} and the nonlinear HJB type
equation \eqref{HJB-from-Lgeneral}.
\end{remark}

\begin{remark}[Monotonicity and curvature transfer under the logarithmic
transform]
\label{rem:curvature-transfer} Under the hypotheses of Proposition~\ref%
{prop:asymptotics_gen}, the Riccati solution $y$ satisfies $y(x)>0$ for all $%
x>x_{0}$, while the coefficient $q_{2}(x)$ is strictly negative. From the
representation 
\begin{equation*}
y(x)\;=\;-\,\frac{1}{q_{2}(x)}\,\frac{u^{\prime }(x)}{u(x)},
\end{equation*}%
it follows that $u^{\prime }(x)>0$ on $(0,X)$; hence the auxiliary function $%
u$ is strictly increasing and strictly positive.

Consider now the logarithmic transformation 
\begin{equation}
z(x)\;=\;-\,2\sigma ^{2}\,\ln u(x),  \label{z-def}
\end{equation}%
which is well defined and belongs to $C^{2}((0,X))$. Differentiating %
\eqref{z-def} yields 
\begin{equation*}
z^{\prime }=-2\sigma ^{2}\,\frac{u^{\prime }(x)}{u(x)}\;<\;0,
\end{equation*}%
so $z$ is strictly decreasing. A second differentiation gives 
\begin{equation*}
z^{\prime \prime }(x)\;=\;-\,2\sigma ^{2}\left( \frac{u^{\prime \prime }(x)}{%
u(x)}-\Big(\frac{u^{\prime }(x)}{u(x)}\Big)^{2}\right) .
\end{equation*}%
Since $u$ solves the linear equation \eqref{L-general}, the sign structure
of the coefficients implies $z^{\prime \prime }(x)<0$ on $(0,X)$; hence $z$
is strictly concave.

Finally, combining the strict concavity of $z$ with the identity 
\begin{equation*}
u(x)\;=\;\exp \!\Big(-\tfrac{1}{2\sigma ^{2}}\,z(x)\Big),
\end{equation*}%
we obtain 
\begin{equation*}
u^{\prime \prime }(x)\;=\;\frac{1}{4\sigma ^{4}}\big(z^{\prime }(x)\big)%
^{2}u(x)\;-\;\frac{1}{2\sigma ^{2}}\,z^{\prime \prime }(x)\,u(x)\;>\;0,
\end{equation*}%
because $z^{\prime }<0$ and $z^{\prime \prime }<0$. Thus $u$ is strictly
convex on $(0,X)$.

In summary, the Riccati positivity $y>0$, the negativity of $q_{2}$, and the
logarithmic transformation \eqref{z-def} induce the curvature chain 
\begin{equation*}
y>0\quad \Longrightarrow \quad u^{\prime }>0\quad \Longrightarrow \quad
z^{\prime }<0,\;z^{\prime \prime }<0\quad \Longrightarrow \quad u^{\prime
\prime }>0,
\end{equation*}%
revealing a monotonicity-concavity-convexity structure that is intrinsic to
the triality between the Riccati, Schr\"{o}dinger, and HJB formulations.
\end{remark}

\subsection{Stochastic Control Interpretation and Verification for General Frameworks}

In this section, we establish the rigorous mathematical connection between the nonlinear second-order equation \eqref{HJB-from-Lgeneral} and the Hamilton--Jacobi--Bellman (HJB) equation of a one-dimensional stochastic optimal control problem. This includes specifying the admissible control space, deriving the HJB equation via the dynamic programming principle, and formally proving the Verification Theorem.

\subsubsection{Controlled diffusion and admissible controls}

Fix $\sigma>0$. Let $(\Omega, \mathcal{F}, \{\mathcal{F}_t\}_{t \ge 0}, \mathbb{P})$ be a filtered probability space satisfying the usual conditions, equipped with a standard one-dimensional Brownian motion $W = (W_t)_{t\ge0}$. We consider the controlled diffusion process $X = (X_t)_{t \ge 0}$ taking values in the state space $\mathcal{X} = (0,X)$ (where $X \in (0, \infty]$), governed by the stochastic differential equation (SDE):
\begin{equation}  \label{eq:SDE}
dX_t \;=\; \bigl[a(X_t) \;+\; \alpha_t\bigr]\,dt \;+\; \sqrt{2}\, dW_t,
\qquad X_0 = x \in \mathcal{X},
\end{equation}
where the drift coefficient $a: \mathcal{X} \rightarrow \mathbb{R}$ is continuous. The constant diffusion coefficient $\sqrt{2}$ ensures that the second-order differential operator in the infinitesimal generator is exactly $\frac{1}{2}(\sqrt{2})^2 \frac{d^2}{dx^2} = \frac{d^2}{dx^2}$. 

The control process $\alpha=(\alpha_t)_{t\ge0}$ belongs to the class of \textit{admissible controls}, denoted by $\mathcal{A}(x)$, defined as the set of all $\mathbb{R}$-valued, $\mathcal{F}_t$-progressively measurable processes such that:
\begin{enumerate}
    \item The SDE \eqref{eq:SDE} admits a unique strong solution up to the exit time $\tau$, defined as the first exit time from the domain $\mathcal{X}$:
    \begin{equation*}
        \tau = \inf\{ t \ge 0 : X_t \notin (0,X) \}.
    \end{equation*}
    \item The control satisfies the integrability condition $\mathbb{E}_x \bigl[ \int_0^\tau \alpha_t^2 \, dt \bigr] < \infty$ to prevent infinite control effort in finite time.
\end{enumerate}

\subsubsection{The Cost Functional and the Value Function}

For an initial state $X_0=x \in \mathcal{X}$ and a chosen control $\alpha \in \mathcal{A}(x)$, we define the expected infinite-horizon running cost:
\begin{equation}  \label{eq:cost}
J(x;\alpha) = \mathbb{E}_x\!\left[ \int_0^{\tau} \left( \frac{\sigma^{2}}{2}%
\alpha_t^{2} \;-\; 2\sigma^{2}b(X_t) \right) dt + Z_{\tau} \mathbf{1}_{\{\tau < \infty\}} \right],
\end{equation}
where $b: \mathcal{X} \rightarrow (-\infty, 0]$ is a continuous function acting as a running reward (or negative cost, consistent with the linear potential theory), and $Z_{\tau}$ represents a boundary terminal cost evaluated at the point of exit. If $\tau = \infty$, the boundary cost is zero, replaced by a suitable transversality condition $\lim_{t \to \infty} \mathbb{E}_x[z(X_{t \wedge \tau})] = 0$.

The \textit{value function} of the optimal control problem is defined as the infimum of the expected cost over all admissible controls:
\begin{equation}  \label{eq:value}
z(x)=\inf_{\alpha \in \mathcal{A}(x)} J(x;\alpha),\qquad x\in \mathcal{X}.
\end{equation}

\subsubsection{Dynamic programming and the HJB equation}

Assuming the value function is sufficiently regular, $z \in C^2(\mathcal{X})$, the standard dynamic programming principle yields the stationary HJB equation:
\begin{equation}  \label{eq:HJB-formal}
0 = \inf_{\alpha\in\mathbb{R}} \left\{ z^{\prime \prime }(x) + \bigl(%
a(x)+\alpha\bigr)z^{\prime }(x) + \frac{\sigma^{2}}{2}\alpha^{2} -
2\sigma^{2}b(x) \right\}, \qquad x\in \mathcal{X}.
\end{equation}

For a fixed $x$, the minimization term is a strictly convex quadratic function in $\alpha$:
\begin{equation*}
\Phi _{x}(\alpha )=\alpha z^{\prime }(x)+\frac{\sigma ^{2}}{2}\alpha ^{2}.
\end{equation*}%
The first-order optimality condition yields the unique minimizer:
\begin{equation*}
z^{\prime }(x)+\sigma ^{2}\alpha ^{\ast }(x)=0\qquad
\Longrightarrow \qquad \alpha ^{\ast }(x)=-\frac{1}{\sigma ^{2}}z^{\prime }(x).
\end{equation*}%
Substituting the optimal feedback control $\alpha ^{\ast }$ back into \eqref{eq:HJB-formal} gives:
\begin{equation*}
0=z^{\prime \prime }(x)+a(x)z^{\prime }(x)-\frac{1}{\sigma ^{2}}\bigl(%
z^{\prime }(x)\bigr)^{2}+\frac{\sigma ^{2}}{2}\left(-\frac{1}{\sigma ^{2}}z^{\prime }(x)\right)^{2}-2\sigma^{2}b(x),
\end{equation*}%
which simplifies perfectly to:
\begin{equation*}
z^{\prime \prime }(x)+a(x)z^{\prime }(x)-\frac{1}{2\sigma ^{2}}\bigl(%
z^{\prime }(x)\bigr)^{2}-2\sigma^{2}b(x)=0.
\end{equation*}
This is exactly the nonlinear reduced equation \eqref{HJB-from-Lgeneral}, proving that the logarithmic transformation \eqref{z-def-general} essentially solves the HJB minimization step.

\subsubsection{Rigorous Verification of Optimality}

\begin{theorem}[Verification Theorem]
Let $z \in C^2(\mathcal{X})$ be a classical solution of \eqref{eq:HJB-formal} satisfying the boundary condition $z(X_{\tau}) = Z_{\tau}$ almost surely on $\{\tau < \infty\}$ and the transversality condition $\lim_{t \to \infty} \mathbb{E}_x[z(X_{t \wedge \tau})] = 0$. Assume that the optimal feedback control defined by:
\begin{equation}  \label{eq:feedback}
\alpha^{*}(x)=-\frac{1}{\sigma^{2}}z^{\prime }(x)
\end{equation}
is admissible ($\alpha^* \in \mathcal{A}(x)$). Then $z(x) = J(x; \alpha^*)$, making it the true value function \eqref{eq:value}.
\end{theorem}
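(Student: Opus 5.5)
We follow the standard verification route: apply It\^{o}'s formula to $z(X_t)$ along an arbitrary admissible control, use the HJB equation \eqref{eq:HJB-formal} to bound the drift from below, localize with stopping times, and pass to the limit using the boundary and transversality conditions. The same computation with $\alpha=\alpha^{*}$ turns the inequality into an equality.

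\textbf{Step 1 (It\^{o} identity).} Fix $x\in\mathcal{X}$ and $\alpha\in\mathcal{A}(x)$. Let $X$ be the corresponding solution of \eqref{eq:SDE}. Choose a localizing sequence $\tau_n:=\tau\wedge n\wedge\inf\{t:X_t\notin[1/n,X-1/n]\}$ (with $X-1/n$ replaced by $n$ if $X=\infty$). On $[0,\tau_n]$ the process stays in a compact subset of $\mathcal{X}$, where $z,z',z''$ are bounded. It\^{o}'s formula gives
\[
z(X_{\tau_n})=z(x)+\int_0^{\tau_n}\bigl(z''(X_s)+(a(X_s)+\alpha_s)z'(X_s)\bigr)ds+\sqrt2\int_0^{\tau_n}z'(X_s)\,dW_s .
\]
The stochastic integral has bounded integrand on $[0,\tau_n]$, so it has zero expectation.

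\textbf{Step 2 (inequality from HJB).} By \eqref{eq:HJB-formal}, for every real $\alpha$,
\[
z''+(a+\alpha)z'\ge -\tfrac{\sigma^2}{2}\alpha^2+2\sigma^2 b .
\]
Equality holds exactly at $\alpha=\alpha^*(x)=-z'(x)/\sigma^2$, by the strict convexity computation preceding the theorem. Taking expectations gives
\[
z(x)\le \mathbb{E}_x\Bigl[\int_0^{\tau_n}\bigl(\tfrac{\sigma^2}{2}\alpha_s^2-2\sigma^2b(X_s)\bigr)ds+z(X_{\tau_n})\Bigr].
\]
For $\alpha=\alpha^*$ this is an equality.

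\textbf{Step 3 (limit $n\to\infty$).} This is the main obstacle.
\begin{itemize}
\item The running integrand is nonnegative, since $b\le0$. Monotone convergence therefore sends the integral term to $\mathbb{E}_x\int_0^\tau(\cdots)$; the $\alpha^2$ part is finite by admissibility.
\item For the terminal term, we need $\mathbb{E}_x[z(X_{\tau_n})]\to\mathbb{E}_x[Z_\tau\mathbf{1}_{\{\tau<\infty\}}]$. On $\{\tau<\infty\}$ we have $X_{\tau_n}\to X_\tau$, and the boundary condition gives $z(X_\tau)=Z_\tau$. On $\{\tau=\infty\}$ we invoke the transversality condition $\mathbb{E}_x[z(X_{t\wedge\tau})]\to0$.
\item Exchanging limit and expectation needs uniform integrability of $\{z(X_{\tau_n})\}$. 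The hypotheses do not supply this directly. We would either add it as a growth or boundedness assumption on $z$ (for instance, $z$ continuous up to a finite boundary, or of polynomial growth combined with moment bounds on $X$), or read the transversality hypothesis as including it along the localizing times $t=n$ together with boundary continuity.
\end{itemize}
Granting this, the limit yields $z(x)\le J(x;\alpha)$ for every admissible $\alpha$.

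\textbf{Step 4 (conclusion).} Since $\alpha^*\in\mathcal{A}(x)$ by assumption, the equality case of Steps 2 and 3 gives $z(x)=J(x;\alpha^*)$. Combined with Step 3,
\[
z(x)=\inf_{\alpha\in\mathcal{A}(x)}J(x;\alpha)=J(x;\alpha^*),
\]
so $z$ is the value function \eqref{eq:value} and $\alpha^*$ is optimal.
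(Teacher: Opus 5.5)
Your argument is the paper's argument: It\^{o}'s formula on a localized interval, the pointwise inequality $z''+(a+\alpha)z'\ge -\tfrac{\sigma^{2}}{2}\alpha^{2}+2\sigma^{2}b$ with equality exactly at $\alpha^{*}$, and then a limit passage using the boundary and transversality conditions. The uniform-integrability gap you flag in Step~3 is genuine. The paper does not close it either; it only invokes Fatou's lemma and dominated convergence. Fatou (with $z$ bounded below) gives only the inequality $z(x)\ge J(x;\alpha^{*})$ in the equality case. The bound $z(x)\le J(x;\alpha)$ for arbitrary $\alpha$ instead needs $\limsup_{n}\mathbb{E}_x[z(X_{t\wedge\tau_n})]\le \mathbb{E}_x[z(X_{t\wedge\tau})]$, i.e.\ reverse Fatou with an integrable upper envelope. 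So a growth or boundedness hypothesis on $z$, together with moment bounds on $X$, has to be added, as you say.

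There is a second point in Step~3 that you pass over. You invoke transversality ``on $\{\tau=\infty\}$''. But the hypothesis $\lim_{t\to\infty}\mathbb{E}_x[z(X_{t\wedge\tau})]=0$ is an unconditional expectation, and it also contains the stopped paths, on which $z(X_{t\wedge\tau})=Z_\tau$. Read literally (with $Z_\tau\mathbf{1}_{\{\tau<\infty\}}$ integrable), it says $\lim_{t}\mathbb{E}_x[z(X_t)\mathbf{1}_{\{\tau>t\}}]=-\mathbb{E}_x[Z_\tau\mathbf{1}_{\{\tau<\infty\}}]$. The verification argument then gives $z(x)=\mathbb{E}_x\int_0^{\tau}\bigl(\tfrac{\sigma^{2}}{2}\alpha^{*}(X_t)^{2}-2\sigma^{2}b(X_t)\bigr)\,dt=J(x;\alpha^{*})-\mathbb{E}_x[Z_\tau\mathbf{1}_{\{\tau<\infty\}}]$, not $J(x;\alpha^{*})$. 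If, say, $z$ is bounded and $\tau<\infty$ a.s., the literal condition even forces $\mathbb{E}_x[Z_\tau]=0$. What your Step~3 actually uses is $\mathbb{E}_x[z(X_t)\mathbf{1}_{\{\tau>t\}}]\to 0$, for every admissible control (otherwise $z\le J(x;\alpha)$ is unavailable). You should state that reformulation explicitly.

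Also, your $\tau_n$ bundles the time cap $n$ with the spatial exit from $[1/n,X-1/n]$. Hence $z(X_{\tau_n})$ is not $z(X_{n\wedge\tau})$, and on $\{\tau=\infty\}$ it need not converge at all, so a condition stated at deterministic times does not plug in directly. The paper's two-stage limit is the natural way to organize this step: take $t\wedge\tau_n$ and let $n\to\infty$ at fixed $t$, which needs uniform integrability only at each fixed $t$, then let $t\to\infty$.
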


\begin{proof}
Applying It\^{o}'s formula to the process $z(X_t)$ under the controlled dynamics \eqref{eq:SDE} with an arbitrary admissible control $\alpha \in \mathcal{A}(x)$, we obtain:
\begin{equation*}
dz(X_t) = \left[ z^{\prime\prime}(X_t) + \bigl(a(X_t) + \alpha_t\bigr) z^{\prime}(X_t) \right] dt + \sqrt{2} z^{\prime}(X_t) dW_t.
\end{equation*}
Integrating from $0$ to $t \wedge \tau_n$, where $\tau_n \uparrow \tau$ is a localizing sequence of stopping times ensuring the stochastic integral is a true martingale, yields:
\begin{equation*}
\mathbb{E}_x[z(X_{t \wedge \tau_n})] - z(x) = \mathbb{E}_x \left[ \int_0^{t \wedge \tau_n} \left( z^{\prime\prime}(X_s) + \bigl(a(X_s) + \alpha_s\bigr) z^{\prime}(X_s) \right) ds \right].
\end{equation*}
From the HJB inequality \eqref{eq:HJB-formal}, we know that for any $\alpha$:
\begin{equation*}
z^{\prime\prime}(X_s) + \bigl(a(X_s) + \alpha_s\bigr) z^{\prime}(X_s) \ge -\left( \frac{\sigma^2}{2} \alpha_s^2 - 2\sigma^2 b(X_s) \right).
\end{equation*}
Substituting this into the expectation gives:
\begin{equation*}
\mathbb{E}_x[z(X_{t \wedge \tau_n})] + \mathbb{E}_x \left[ \int_0^{t \wedge \tau_n} \left( \frac{\sigma^2}{2} \alpha_s^2 - 2\sigma^2 b(X_s) \right) ds \right] \ge z(x).
\end{equation*}
Taking the limits $n \to \infty$ and $t \to \infty$, and applying Fatou's Lemma alongside the boundary and transversality conditions, we establish $J(x; \alpha) \ge z(x)$.

For the specific choice $\alpha_t = \alpha^*(X_t)$, the HJB inequality becomes a strict equality. The same localization and limit argument, utilizing the Dominated Convergence Theorem under the admissibility conditions of $\alpha^*$, yields $J(x; \alpha^*) = z(x)$. Hence, $z$ coincides with the value function, and the feedback law \eqref{eq:feedback} is demonstrably optimal.
\end{proof}

\section{The Triality of Radial Nonlinear Dynamics \label{Triality}}

This section formalizes and proves the central structural result of the
paper: the radial \emph{Triality Theorem}. Building on the general
linearization formulae of Section~\ref{sec:main_results}, we establish a
strict equivalence between the regular branches of the radial Riccati,
Schr\"{o}dinger and Hamilton--Jacobi--Bellman equations on bounded and
unbounded intervals.

\begin{corollary}[Radial case -- version of Remark \protect\ref%
{rem:general-Riccati-linearization}]
\label{lem:transformation} Let $Q\in C((0,R))$. A function $\phi \in
C^{1}((0,R))$ is a solution to the Riccati equation \eqref{eq:ric_ode} if
and only if there exists a $C^{2}((0,R))$ solution $u$ to the linear
auxiliary equation
\begin{equation}
u^{\prime \prime }(r)+\frac{N-1}{r}u^{\prime }(r)-\frac{Q(r)}{\sigma ^{4}}
u(r)=0,\qquad r\in (0,R),  \label{eq:lin_ode}
\end{equation}
such that $u(r)\neq 0$ for all $r\in (0,R)$, and $\phi $ is given by the
logarithmic-style derivative
\begin{equation}
\phi (r)=\frac{1}{r}\frac{u^{\prime }(r)}{u(r)}.  \label{eq:transform}
\end{equation}
\end{corollary}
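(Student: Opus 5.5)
We will obtain the corollary by specializing Remark~\ref{rem:general-Riccati-linearization} to the radial coefficients of \eqref{eq:ric_ode}:
\[
q_{0}(r)=\frac{Q(r)}{\sigma^{4}r},\qquad q_{1}(r)=-\frac{N}{r},\qquad q_{2}(r)=-r,
\]
on the interval $(x_{0},X)=(0,R)$. These coefficients are continuous on $(0,R)$. Moreover $q_{2}\in C^{1}((0,R))$ and $q_{2}(r)\neq 0$ there, so the structural hypotheses of the remark hold. The singularity at $r=0$ is harmless, since the statement only concerns the open interval.

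The first step is to compute the coefficients \eqref{ab-def}:
\[
a(r)=-\frac{N}{r}+\frac{q_{2}'(r)}{q_{2}(r)}=-\frac{N}{r}+\frac{1}{r}=-\frac{N-1}{r},\qquad b(r)=-q_{0}q_{2}=\frac{Q(r)}{\sigma^{4}}.
\]
The transform \eqref{y-from-u} then becomes $\phi=\frac{1}{r}\frac{u'}{u}$, which is exactly \eqref{eq:transform}. However, the resulting linear equation is $u''-\frac{N-1}{r}u'+\frac{Q}{\sigma^{4}}u=0$, which is not \eqref{eq:lin_ode}: both lower-order terms have the wrong sign. So the remark cannot be invoked verbatim with this normalization.

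The main obstacle is this sign mismatch, and I will resolve it by checking the correspondence directly. Set $w=u'/u$, so that $\phi=w/r$ and $w'=u''/u-w^{2}$. Then
\[
\phi'=\frac{w'}{r}-\frac{w}{r^{2}}=\frac{1}{r}\Big(\frac{u''}{u}-r^{2}\phi^{2}\Big)-\frac{\phi}{r}.
\]
If $u$ solves \eqref{eq:lin_ode}, then $u''/u=-(N-1)\phi+Q/\sigma^{4}$. Substituting gives
\[
\phi'=-r\phi^{2}-\frac{N}{r}\phi+\frac{Q}{\sigma^{4}r},
\]
which is \eqref{eq:ric_ode}. The sign conventions therefore agree, and the discrepancy above is only an artifact of the choice of $q_{2}$ when applying the remark. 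Rather than forcing the general formula, I will present the proof in the following order.

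\textbf{Direction $u\Rightarrow\phi$.} Take a nonvanishing $u\in C^{2}((0,R))$ solving \eqref{eq:lin_ode}. Then $\phi$ is $C^{1}$ as a quotient of $C^{1}$ functions with a nonvanishing denominator, and the computation above shows it satisfies \eqref{eq:ric_ode}.

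\textbf{Direction $\phi\Rightarrow u$.} Given $\phi\in C^{1}((0,R))$ solving \eqref{eq:ric_ode}, fix $r_{0}\in(0,R)$ and define
\[
u(r)=\exp\Big(\int_{r_{0}}^{r}s\,\phi(s)\,ds\Big),
\]
which is the radial analogue of \eqref{u-from-y}. Then $u>0$ and $u'=r\phi u$, so $u\in C^{2}$ and \eqref{eq:transform} holds. Differentiating once more gives
\[
u''=(\phi+r\phi'+r^{2}\phi^{2})u.
\]
Inserting \eqref{eq:ric_ode} for $r\phi'$ yields $u''=\big(-(N-1)\phi+Q/\sigma^{4}\big)u$, which, using $u'=r\phi u$, is exactly \eqref{eq:lin_ode}.

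Finally, I will note that $u$ is determined only up to a nonzero multiplicative constant, and that this freedom does not affect \eqref{eq:transform}.
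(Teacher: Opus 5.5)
Your proof is correct. The paper gives no separate argument for this corollary. It presents it as the specialization of Remark~\ref{rem:general-Riccati-linearization} to $q_{0}=Q/(\sigma^{4}r)$, $q_{1}=-N/r$, $q_{2}=-r$. Your direct computation is that specialization carried out by hand, and your converse $u=\exp\bigl(\int_{r_{0}}^{r}s\phi(s)\,ds\bigr)$ is exactly \eqref{u-from-y} with $q_{2}=-r$.

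The sign mismatch you found is real, but your diagnosis of it is not. The choice $q_{2}=-r$ is not an artifact; it is forced by \eqref{eq:ric_ode}. The culprit is formula \eqref{ab-def} in the Remark, which has both signs flipped. Substituting $y=-u'/(q_{2}u)$ into $y'=q_{0}+q_{1}y+q_{2}y^{2}$ gives
$u''-\bigl(q_{1}+q_{2}'/q_{2}\bigr)u'+q_{0}q_{2}\,u=0$,
so the correct coefficients are $a=-(q_{1}+q_{2}'/q_{2})$ and $b=q_{0}q_{2}$. With the radial coefficients these give $a=(N-1)/r$ and $b=-Q/\sigma^{4}$, which is exactly \eqref{eq:lin_ode}. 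The corrected sign $b=q_{0}q_{2}\le 0$ is also the one compatible with two other places in the paper:
\begin{itemize}
\item the hypothesis $b\le 0$ in Theorem~\ref{thm:existence-uniqueness-Lgeneral};
\item the choice $b=-Q/\sigma^{4}$ made in the proof of Theorem~\ref{thm:triality}.
\end{itemize}

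Citing the Remark verbatim, as the paper implicitly does, therefore would not prove the corollary. Your self-contained verification in both directions is what actually establishes it. You are also right to read \eqref{eq:ric_ode} here as the differential equation on $(0,R)$ only, since the condition $\phi(0)=0$ has no meaning for $\phi\in C^{1}((0,R))$.
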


\begin{theorem}[Triality Theorem -- Radial Riccati / Schr\"{o}dinger / HJB
equivalence]\label{thm:triality}
Let $X\in(0,\infty]$, $\sigma>0$, $N\in\mathbb{N}_{\ge 1}$, and let
$Q\in C([0,X))$ satisfy the standing assumptions $\mathrm{(H1)}$--$\mathrm{(H2)}$
of Section~\ref{sec:notation}. Consider the three radial problems
\[
\mathrm{(R)}\quad
\phi'(r)=-r\phi(r)^{2}-\frac{N}{r}\phi(r)+\frac{Q(r)}{\sigma^{4}r},
\qquad r\in(0,X),\qquad \phi(0)=0,
\]
\[
\mathrm{(S)}\quad
u''(r)+\frac{N-1}{r}u'(r)-\frac{Q(r)}{\sigma^{4}}u(r)=0,
\qquad r\in(0,X),\qquad u(0)=1,\;u'(0)=0,
\]
\[
\mathrm{(H)}\quad
z''(r)+\frac{N-1}{r}z'(r)-\frac{1}{2\sigma^{2}}(z'(r))^{2}+\frac{2Q(r)}{\sigma^{2}}=0,
\qquad r\in(0,X),\qquad z(0)=0,\;z'(0)=0,
\]
and assume that one of the three problems admits a regular classical
solution. Then \emph{all three} admit unique regular classical solutions
$\phi\in C^{1}([0,X))$, $u\in C^{2}([0,X))$ with $u\geq 1$ and
$z\in C^{2}([0,X))$, related by the bijective triangular maps
\begin{equation}
\phi(r)=\frac{1}{r}\frac{u'(r)}{u(r)},\qquad
z(r)=-2\sigma^{2}\ln u(r),\qquad
u(r)=\exp\!\bigl(-z(r)/(2\sigma^{2})\bigr).
\label{eq:triality_maps}
\end{equation}
Moreover, this equivalence transfers the following qualitative
information across the three formulations:
\begin{enumerate}
\item[(i)] $\phi(r)>0$ for $r\in(0,X)$ if and only if $u'(r)>0$ for
$r\in(0,X)$, if and only if $z'(r)<0$ for $r\in(0,X)$;
\item[(ii)] under the additional hypothesis that $Q$ is monotone non-decreasing
on $(0,X)$, $u$ is strictly convex on $[0,X)$ and $z$ is strictly concave on
$[0,X)$;
\item[(iii)] if $X=\infty$ and assumption $\mathrm{(H3)}$ holds with
$L\in(0,\infty)$, then $\phi$ extends to $[0,\infty)$ with the universal
asymptotic plateau
$\lim_{r\to\infty}\phi(r)=\sqrt{L}/\sigma^{2}$.
\end{enumerate}
\end{theorem}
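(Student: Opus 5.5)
The strategy is to take the linear problem (S) as the hub and route all three equivalences through it. In fact (S) always has a solution, so I will prove that it has a unique regular solution $u\in C^{2}([0,X))$ with $u\ge 1$ and $u'\ge 0$, and then transport this to (R) and (H). This makes the hypothesis ``one of the three problems admits a regular solution'' automatic.

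For (S), I invoke Theorem~\ref{thm:existence-uniqueness-Lgeneral} with $a(r)=(N-1)/r$, $b(r)=-Q(r)/\sigma^{4}\le 0$ and $\alpha=1$. The coefficient $a$ is singular at $0$, so the integrating factor $e^{A}$ must be replaced by $m(r)=r^{N-1}$. The Volterra formula then reads
\[
u(r)=1+\int_{0}^{r}s^{1-N}\int_{0}^{s}t^{N-1}\frac{Q(t)}{\sigma^{4}}u(t)\,dt\,ds .
\]
Here the kernel is bounded by $\|Q\|_{\infty}(r-t)/\sigma^{4}$, since $t^{N-1}s^{1-N}\le 1$, so the successive approximations of Steps~4--6 go through verbatim. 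Because the equation is linear with locally bounded coefficients, no blow-up can occur, and $X_{\max}=X$.

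Positivity of the integrand gives $u\ge 1$ and
\[
u'(r)=r^{1-N}\int_{0}^{r}t^{N-1}\frac{Q(t)}{\sigma^{4}}u(t)\,dt\ge 0 .
\]
Regularity at $0$ follows from (H2): $u'(r)\sim L_{0}r^{3}/((N+2)\sigma^{4})$, and $u''(0)=\lim_{r\to 0}Q(r)/(N\sigma^{4})=0$ because $Q(0)=0$.

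Next I define $\phi=u'/(ru)$ and $z=-2\sigma^{2}\ln u$. Since $u\ge 1$, both are well defined on $[0,X)$. Corollary~\ref{lem:transformation} gives that $\phi$ solves (R), and Remark~\ref{rem:HJB-from-Lgeneral} with $b=-Q/\sigma^{4}$ gives that $z$ solves (H); there $-2\sigma^{2}b=2Q/\sigma^{2}$, which matches (H).

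The main obstacle is behaviour at $r=0$. From the integral formula, $\phi(r)=r^{-N}\int_{0}^{r}t^{N-1}Qu\,dt/(\sigma^{4}u)$, which is $O(r^{2})$ under (H2). Hence $\phi(0)=0$, and $\phi$ extends to $C^{1}([0,X))$ with $\phi'(0)=0$; this is where (H2) is genuinely needed. For $z$, we have $z'=-2\sigma^{2}u'/u$ and $z(0)=z'(0)=0$.

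Uniqueness and bijectivity go through the inverse maps. Given a regular $\phi$ solving (R), set $u=\exp(\int_{0}^{r}s\phi(s)\,ds)$; by Remark~\ref{rem:general-Riccati-linearization} this solves (S) with $u(0)=1$ and $u'(0)=0$. Given a regular $z$ solving (H), $u=e^{-z/(2\sigma^{2})}$ solves (S) by direct substitution. Uniqueness for (S) therefore forces uniqueness for (R) and (H). The maps are mutually inverse on these regular branches, and $\phi$ determines $u$ uniquely through the normalization $u(0)=1$.

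The qualitative transfers follow from these formulas.

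\emph{(i)} Since $u>0$ and $r>0$, the signs of $\phi$ and $u'$ coincide, and $z'=-2\sigma^{2}u'/u$ has the opposite sign.

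\emph{(ii)} Using the integral formula and $Q$ non-decreasing, I write
\[
u''=\frac{Q}{\sigma^{4}}u-\frac{N-1}{r}u' .
\]
Because $u$ is non-decreasing, $\int_{0}^{r}t^{N-1}Qu\,dt\le Q(r)u(r)r^{N}/N$, so
\[
u''\ge \frac{Q u}{\sigma^{4}}\Bigl(1-\frac{N-1}{N}\Bigr)=\frac{Qu}{N\sigma^{4}}\ge 0 .
\]
This is strict wherever $Q>0$. Strict convexity holds on intervals where $Q>0$; where $Q\equiv 0$ on an initial segment, $u$ is constant there and only weak convexity holds, so I expect to need the mild caveat that $Q\not\equiv 0$ near $0$ for strict convexity. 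For $z$, I use
\[
z''=-2\sigma^{2}\bigl(u''/u-(u'/u)^{2}\bigr).
\]
The identity $u''/u-(u'/u)^{2}=(r\phi)'=\phi+r\phi'$ reduces concavity of $z$ to positivity of $\phi+r\phi'$. Monotonicity of $\phi$ is exactly what Proposition~\ref{prop:asymptotics_gen} provides in the setting of (iii), so I would cite it there and otherwise argue $\phi'\ge 0$ from the representation of $\phi$ as a weighted average of $Qu/\sigma^{4}$ with non-decreasing $Q$.

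\emph{(iii)} I apply Proposition~\ref{prop:asymptotics_gen} with $q_{2}=-r$, $q_{1}=-N/r$ and $q_{0}=Q/(\sigma^{4}r)$. This gives $A=-Q/(\sigma^{4}r^{2})\to -L/\sigma^{4}$ and $B=N/r^{2}\to 0$, so $\lambda_{*}=\sqrt{L}/\sigma^{2}$, with
\[
g(r)=\frac{-N/r^{2}+\sqrt{N^{2}/r^{4}+4Q/(\sigma^{4}r^{2})}}{2}
\]
supplied by Theorem~\ref{thm:positivity}. Alternatively, I would argue the limit directly: $\phi$ is bounded by $g$, and a limit argument in (R) identifies the plateau. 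This avoids having to verify the monotonicity of $g$, which I do not expect to hold for general $Q$.
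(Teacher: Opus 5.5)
\textbf{Overall.} Your route is the paper's route. The paper's proof is a chain of citations: Theorem~\ref{thm:existence-uniqueness-Lgeneral} for $u$, Corollary~\ref{lem:transformation} and Remark~\ref{rem:HJB-from-Lgeneral} for the maps, Theorems~\ref{thm:convex_u} and~\ref{thm:concavity} for (ii), and Corollary~\ref{cor:radial_asymptotics} for (iii). You improve on it in two places. First, you replace $e^{\int_0^r a}$, which diverges for $N\ge 2$, by $r^{N-1}$. Second, you notice that strict convexity fails when $Q\equiv 0$ on an initial interval, and that $u''(0)=0$. Existence and uniqueness, the bijection, (i), and the convexity of $u$ are fine. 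Two steps do not go through as planned: the concavity of $z$ in (ii), and part (iii).

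\emph{Concavity of $z$ in (ii).} Your reduction to $(r\phi)'>0$ is correct. The plan to obtain it from $\phi'\ge 0$ fails, because $\phi$ need not be monotone when $Q$ is only non-decreasing.

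Take $X=\infty$ and $Q(r)=\min\{r^{2},1\}$. For $r>1$ one has $u'/u\to\sigma^{-2}$, so $\phi=u'/(ru)\to 0$ even though $\phi>0$. The integral formula for $\phi$ is not an average of $Q$, since it is normalised by $u(r)$ rather than by $\int_0^r t^{N-1}u$. Proposition~\ref{prop:asymptotics_gen} gives $\phi'>0$ only when $g$ is increasing. The paper's Theorem~\ref{thm:concavity} simply assumes $\phi$ is increasing, so citing it does not help.

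What you need is monotonicity of $w:=r\phi=u'/u$. The barrier idea gives this if you apply it to $w$:
\[
w'=\frac{Q}{\sigma^{4}}-\frac{N-1}{r}\,w-w^{2}=-(w-w_{*})\Bigl(w+w_{*}+\frac{N-1}{r}\Bigr),\qquad
w_{*}(r)=\frac{1}{2}\Bigl(\sqrt{\frac{(N-1)^{2}}{r^{2}}+\frac{4Q(r)}{\sigma^{4}}}-\frac{N-1}{r}\Bigr).
\]
The root $w_*$ increases with $Q$ and decreases with $(N-1)/r$, so it is non-decreasing whenever $Q$ is.

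Suppose $w(r_2)>w_*(r_2)$ for some $r_2$. Let $r_1$ be the supremum of the points below $r_2$ where $w\le w_*$, with $r_1=0$ if there are none. Then $w'<0$ on $(r_1,r_2]$, so $w(r_2)<w(r_1)\le w_*(r_2)$, a contradiction. Hence $w\le w_*$, so $w'\ge 0$ and $z''\le 0$.

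A short extra argument shows $w$ is not constant on any interval when $Q>0$ on $(0,X)$. For $N\ge2$, $w\equiv c$ would force $Q=\sigma^4(c^2+(N-1)c/r)$, which is strictly decreasing. For $N=1$ it is a Gronwall step. This gives strict concavity, with the same caveat you found for $u$, and with $z''(0)=0$.

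\emph{Part (iii).} Proposition~\ref{prop:asymptotics_gen}, and Corollary~\ref{cor:radial_asymptotics} which the paper cites, both need $g$ increasing. Hypotheses (H1)--(H3) do not give this, as you note yourself.

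Your fallback does not repair this. The bound $\phi<g$ is itself an output of the monotone-barrier argument; where $g$ decreases, $\phi$ lags above it. The ``limit argument in (R)'' presupposes that $\lim\phi$ exists, and in the paper that existence came from monotonicity of $\phi$.

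The missing idea is that the relaxation rate towards $g$ blows up. Equation (R) reads $\phi'=-r(\phi-g)(\phi+g+N/r^{2})$, and $g\to\lambda_*=\sqrt{L}/\sigma^{2}$ by (H3) alone. Fix $0<\epsilon<\lambda_*$ and choose $R_\epsilon$ with $|g-\lambda_*|<\epsilon$ on $[R_\epsilon,\infty)$. On that range:
\begin{itemize}
\item if $\phi\ge\lambda_*+2\epsilon$, then $\phi'\le-\epsilon(\lambda_*-\epsilon)r$;
\item if $0\le\phi\le\lambda_*-2\epsilon$, then $\phi'\ge\epsilon(\lambda_*-\epsilon)r$.
\end{itemize}
So the strip $|\phi-\lambda_*|<2\epsilon$ is entered at some finite $r$; otherwise $\phi$ would become negative or unbounded. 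Once entered, it is never left. Hence $\phi\to\sqrt{L}/\sigma^{2}$, with no monotonicity of $g$ or of $\phi$ required.

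\emph{Minor point.} Remark~\ref{rem:general-Riccati-linearization} has its signs reversed; it should read $a=-q_1-q_2'/q_2$ and $b=q_0q_2$. Check the inverse map $u=\exp(\int_0^r s\phi)$ by direct substitution instead: $u''+\frac{N-1}{r}u'-\frac{Q}{\sigma^4}u=ru\bigl(\phi'+r\phi^2+\frac{N}{r}\phi-\frac{Q}{\sigma^4 r}\bigr)$.
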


\begin{proof}
The bijection \eqref{eq:triality_maps} is an immediate consequence of
Corollary~\ref{lem:transformation} and Remark~\ref{rem:HJB-from-Lgeneral}.
The existence and uniqueness of $u$ is the radial specialization of
Theorem~\ref{thm:existence-uniqueness-Lgeneral} with
$a(r)=(N-1)/r$ and $b(r)=-Q(r)/\sigma^{4}$, complemented by the Frobenius
analysis at $r=0$ that selects the regular branch (cf.\ proof of
Theorem~\ref{thm:existence_rigorous}). The derivation of $\phi$ from $u$
and the smoothness $\phi\in C^{1}([0,X))$ also follow from
Theorem~\ref{thm:existence_rigorous}. The transfer (i) is immediate from
the transformations \eqref{eq:triality_maps}; (ii) is the content of
Theorem~\ref{thm:convex_u} and Theorem~\ref{thm:concavity}; (iii) is
Corollary~\ref{cor:radial_asymptotics}.
\end{proof}

\subsection{Existence, Uniqueness, and Regularity at the Origin}

Handling the singularity at $r=0$ is critical for the radial problem. The
following theorem establishes the existence of a "regular" solution, which
corresponds to the physically meaningful state where the drift is zero at
the center of the potential.

\begin{theorem}[Local and Global Well-Posedness]
\label{thm:existence_rigorous} Let $Q\in C([0,R])$ be a non-negative
function such that the radial growth at the origin is characterized by the
limit 
\begin{equation}
L_{0}:=\lim_{r\rightarrow 0^{+}}\frac{Q(r)}{r^{2}}\in (0,\infty ).
\label{eq:origin_growth_rigorous}
\end{equation}%
Then there exists a unique solution $\phi \in C^{1}([0,R))$ to the Riccati
equation \eqref{eq:ric_ode} such that $\phi (0)=0$.
\end{theorem}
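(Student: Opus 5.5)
The plan is to pass through the linear auxiliary equation \eqref{eq:lin_ode} and use Corollary~\ref{lem:transformation}. Existence comes from the regular solution $u$ of \eqref{eq:lin_ode} with $u(0)=1$, $u'(0)=0$. This $u$ is obtained from Theorem~\ref{thm:existence-uniqueness-Lgeneral} with $a(r)=(N-1)/r$ and $b(r)=-Q(r)/\sigma^{4}\le 0$.

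The kernel construction there uses $\int_0^x a$, which diverges at $0$. So I would redo Step~1 with the weight $m(r)=r^{N-1}$ in place of $e^{A}$, which gives
\[
u'(r)=\frac{1}{\sigma^{4}r^{N-1}}\int_{0}^{r}t^{N-1}Q(t)\,u(t)\,dt,\qquad
u(r)=1+\int_{0}^{r}K(r,t)\,u(t)\,dt .
\]
Here $K(r,t)=\sigma^{-4}t^{N-1}Q(t)\int_t^r s^{1-N}\,ds$. One checks that $|K(r,t)|\le C\,t\,\|Q\|_\infty$ for $N\ge 2$, and a similar bound holds for $N=1$, so $K$ is bounded and continuous on the triangle. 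Picard iteration as in Step~4 then converges on all of $[0,R]$, since the Volterra structure gives factorial decay $(Cr)^n/n!$ with no smallness needed. This yields $u\in C([0,R])$ solving the integral equation, and hence $u\in C^2((0,R])\cap C^1([0,R])$ solving \eqref{eq:lin_ode}.

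Since $Q\ge0$, the iterates and $u$ are $\ge1$, and $u'\ge0$. In particular $u$ never vanishes, so $\phi=u'/(ru)$ is $C^1((0,R))$ and solves \eqref{eq:ric_ode} by Corollary~\ref{lem:transformation}.

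\textbf{Regularity at $0$ (the main obstacle).} I would show $\phi(r)\to0$ and that $\phi$ is $C^1$ up to $r=0$. From the formula for $u'$,
\[
\phi(r)=\frac{1}{\sigma^{4}u(r)\,r^{N}}\int_0^r t^{N-1}Q(t)u(t)\,dt .
\]
Using \eqref{eq:origin_growth_rigorous}, $Q(t)=L_0t^2+o(t^2)$, and $u(t)=1+O(t^2)$, this gives $\phi(r)=\frac{L_0}{(N+2)\sigma^4}r^2+o(r^2)$. Hence $\phi(0)=0$, and the difference quotient gives $\phi'(0)=0$. For continuity of $\phi'$ at $0$, I would read it off the equation: $\phi'=-r\phi^2-\frac{N}{r}\phi+\frac{Q}{\sigma^4 r}$. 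Each term is $O(r)$ by the expansion just obtained, so $\phi'(r)\to0=\phi'(0)$. This is where the quadratic behaviour of $Q$ at the origin is essential; without it the term $Q/r$ need not vanish.

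\textbf{Uniqueness.} Let $\psi\in C^1([0,R))$ be another solution with $\psi(0)=0$. Define $v(r)=\exp\int_0^r s\psi(s)\,ds$, which is the radial form of \eqref{u-from-y}. Then $v$ is positive, $v(0)=1$, $v'(r)=r\psi v\to0$, and $v$ solves \eqref{eq:lin_ode} on $(0,R)$ by Corollary~\ref{lem:transformation}. Moreover $r^{N-1}v'(r)\to0$. Integrating $(r^{N-1}v')'=\sigma^{-4}r^{N-1}Qv$ from $0$ therefore shows that $v$ satisfies the same Volterra equation as $u$. The Gronwall/iteration argument of Step~6 then gives $v\equiv u$, and hence $\psi=\phi$.

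Alternatively, one can argue directly on the Riccati equation. Set $w=\phi-\psi$; then $w'=-(N/r)w-r(\phi+\psi)w$, so $(r^Nw)'=-r^{N+1}(\phi+\psi)w$. With $r^Nw\to0$ at $0$, Gronwall gives $w\equiv0$. This is the cleaner route, and I would present it as the main uniqueness argument.
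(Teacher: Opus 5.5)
Your proof is correct, and it takes a genuinely different and more careful route than the paper's.

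\textbf{The paper's route.} The paper runs a Frobenius analysis at the regular singular point $r=0$, with indicial roots $0$ and $2-N$. From it the paper takes the expansion $u(r)=1+\frac{L_0}{4\sigma^4(N+2)}r^4+o(r^4)$ of the regular branch and reads off $\phi(r)=\frac{L_0}{\sigma^4(N+2)}r^2+o(r^2)$. It then asserts $\phi\in C^1([0,R))$ ``from $u\in C^2$''. Positivity and global existence come from $(r^{N-1}u')'=\sigma^{-4}r^{N-1}Qu\ge 0$ plus a continuation argument, and uniqueness of $\phi$ is simply inherited from uniqueness of the regular branch $u$.

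\textbf{Your route.} You build $u$ from a Volterra equation with integrating factor $r^{N-1}$. This is the right repair on two counts. First, a Frobenius power series needs $r^2T(r)=-r^2Q/\sigma^4$ to be analytic, whereas $Q$ is only continuous. Second, Theorem~\ref{thm:existence-uniqueness-Lgeneral}, which the paper invokes with $a=(N-1)/r$, is built on $\int_0^x a$, and this integral diverges for $N\ge 2$.

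Your construction has several further advantages.
\begin{itemize}
\item It gives $u\ge 1$ and $u'\ge 0$ directly from the nonnegative kernel on all of $[0,R]$, with no continuation step.
\item It derives the $r^2$ behaviour of $\phi$ rigorously from $\phi=\frac{1}{\sigma^4 u r^N}\int_0^r t^{N-1}Qu\,dt$.
\item It settles continuity of $\phi'$ at $0$, which the paper leaves implicit. From $u\in C^2$ one only gets continuity of $\phi$; reading $\phi'$ off the equation is what is actually needed.
\item It proves uniqueness among all $C^1$ solutions with $\phi(0)=0$. Your identity $(r^Nw)'=-r^{N+1}(\phi+\psi)w$ neutralizes the singular coefficient $-N/r$, which puts the problem outside Picard--Lindel\"of at $r=0$.
\end{itemize}
In exchange, the paper's approach offers the explicit local expansion and the identification of the regular versus singular Frobenius branch. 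It reuses both as the warm start for its numerical scheme.

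\textbf{Two small remarks.} For $N=2$ your kernel is $K(r,t)=\sigma^{-4}tQ(t)\ln(r/t)$, so the bound $|K|\le Ct\|Q\|_\infty$ fails as $t\to 0$. However, $t\ln(r/t)\le r/e$, so $K$ is still bounded and continuous on the triangle, and nothing downstream changes. Also, your argument never uses $L_0>0$. It goes through for $L_0\in[0,\infty)$, and even $Q(r)=o(r)$ suffices for $\phi\in C^1$ at $0$. So your remark that the quadratic behaviour of $Q$ is ``essential'' is an overstatement.
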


\begin{proof}
\textbf{1. Frobenius Analysis at the Singularity.} The linear auxiliary
equation \eqref{eq:lin_ode} can be written in the standard form 
\begin{equation*}
u^{\prime \prime }+P(r)u^{\prime }+T(r)u=0,
\end{equation*}%
where 
\begin{equation*}
P(r)=\frac{N-1}{r}\text{ and }T(r)=-\frac{Q(r)}{\sigma ^{4}}.
\end{equation*}%
Since 
\begin{equation*}
rP(r)=N-1\text{ and }r^{2}T(r)=-r^{2}Q(r)/\sigma ^{4}
\end{equation*}%
are continuous (and thus analytic in the sense of regular singularities) at $%
r=0$, the origin is a regular singular point. The indicial equation is 
\begin{equation*}
\alpha (\alpha -1)+(N-1)\alpha =\alpha (\alpha +N-2)=0,
\end{equation*}%
yielding roots $\alpha _{1}=0$ and $\alpha _{2}=2-N$. The regular solution
(the one that is finite at the origin) corresponds to $\alpha _{1}=0$, and
has a power series expansion of the form 
\begin{equation*}
u(r)=1+\sum_{k=2}^{\infty }c_{k}r^{k}.
\end{equation*}%
Given the radial symmetry and the structure of $Q(r)$, only even powers
appear if $Q(r)$ is even. In general, for $Q(r)$ such that 
\begin{equation*}
\lim_{r\rightarrow 0}\frac{Q(r)}{r^{2}}=L_{0},
\end{equation*}%
we have for $r\rightarrow 0$: 
\begin{equation}
u(r)=1+\frac{L_{0}}{4\sigma ^{4}(N+2)}r^{4}+\text{o}(r^{4}).
\label{eq:u_expansion_origin}
\end{equation}

\textbf{2. Continuity of $\phi $ at the origin.} From the expansion %
\eqref{eq:u_expansion_origin}, the derivative satisfies 
\begin{equation*}
u^{\prime }(r)=\frac{L_{0}}{\sigma ^{4}(N+2)}r^{3}+\text{o}(r^{3}).
\end{equation*}%
Substituting this into the transformation formula \eqref{eq:transform}: 
\begin{equation}
\phi (r)=\frac{1}{r}\frac{u^{\prime }(r)}{u(r)}=\frac{1}{r}\frac{\frac{L_{0}%
}{\sigma ^{4}(N+2)}r^{3}+\text{o}(r^{3})}{1+\text{o}(r^{2})}=\frac{L_{0}}{%
\sigma ^{4}(N+2)}r^{2}+\text{o}(r^{2}).  \label{eq:phi_limit_origin}
\end{equation}%
Equation \eqref{eq:phi_limit_origin} implies that $\lim_{r\rightarrow 0}\phi
(r)=0$, and since $\phi $ is given by $u^{\prime }(r)/(ru(r))$, it follows
from $u\in C^{2}$ and $u(0)=1$ that $\phi \in C^{1}([0,R))$. Explicitly, $%
\phi (0)=0$ and $\phi ^{\prime }(0)=0$.

\textbf{3. Positivity and Global Extension.} Since%
\begin{equation*}
Q(r)\geq 0\text{ and }u(0)=1,u^{\prime }(0)=0,
\end{equation*}%
we have 
\begin{equation*}
(r^{N-1}u^{\prime }\left( r\right) )^{\prime }=\frac{1}{\sigma ^{4}}%
r^{N-1}Q(r)u(r).
\end{equation*}%
If we assume $u(r)>0$ on some interval $[0,\epsilon ]$, then $%
(r^{N-1}u^{\prime })^{\prime }\geq 0$, which implies $r^{N-1}u^{\prime
}(r)\geq 0$ for all $r\in \lbrack 0,\epsilon ]$. Thus $u(r)$ is
non-decreasing on this interval. By standard continuation arguments for
second-order linear ODEs with non-negative potentials, $u(r)$ is
non-decreasing and strictly positive for all $r\in \lbrack 0,R)$. Since $%
u(r)\geq 1$, the denominator in the definition of $\phi $ is strictly
positive, ensuring that $\phi $ is well-defined and exists uniquely on $[0,R)
$.
\end{proof}

\begin{corollary}[Radial case -- version of Proposition \protect\ref{R}]
\label{cor:radial_asymptotics} Consider the radial Riccati equation 
\begin{equation}
\varphi ^{\prime }(r)=-\frac{1}{r}\Bigl(r^{2}\varphi (r)^{2}+N\varphi (r)-%
\tfrac{Q(r)}{\sigma ^{4}}\Bigr),\qquad r>0,\qquad \varphi (0)=0,
\label{R-radial-integrated}
\end{equation}%
where $Q\in C([0,\infty ))$ is non-negative, with $Q(0)=0$, and%
\begin{equation*}
L:=\lim_{r\rightarrow \infty }\frac{Q(r)}{r^{2}}\in (0,\infty ).
\end{equation*}%
Define the barrier function 
\begin{equation}
g(r)=\frac{\sqrt{N^{2}+\frac{4r^{2}Q(r)}{\sigma ^{4}}}-N}{2r^{2}},\qquad r>0.
\label{g-radial-integrated}
\end{equation}%
Assume that $g\in C^{1}((0,\infty ))$ is \emph{monotonically increasing}, 
\emph{bounded from above}, and satisfies%
\begin{equation*}
\lim_{r\downarrow 0}g(r)=0,\qquad \lim_{r\rightarrow \infty }g(r)=\frac{%
\sqrt{L}}{\sigma ^{2}}.
\end{equation*}%
Then equation \eqref{R-radial-integrated} admits a unique regular solution%
\begin{equation*}
\varphi \in C^{1}([0,\infty )),\qquad \varphi (0)=0,
\end{equation*}%
and this solution satisfies:

\begin{enumerate}
\item $0<\varphi(r)<g(r)$ for all $r>0$;

\item $\varphi$ is strictly increasing on $(0,\infty)$;

\item the following limit exists:%
\begin{equation*}
\lim_{r\rightarrow \infty }\varphi (r)=\frac{\sqrt{L}}{\sigma ^{2}}.
\end{equation*}
\end{enumerate}
\end{corollary}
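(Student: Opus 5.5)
We specialize Proposition~\ref{prop:asymptotics_gen} to the radial coefficients. Writing \eqref{R-radial-integrated} in the form \eqref{R} on $[x_0,\infty)=[0,\infty)$ gives
\[
q_{0}(r)=\frac{Q(r)}{\sigma^{4}r}\ge 0,\qquad q_{1}(r)=-\frac{N}{r},\qquad q_{2}(r)=-r\le 0,
\]
all continuous on $(0,\infty)$ and singular only at $0$, with $q_2(r)\neq 0$ for $r>0$. We then verify the hypotheses of the Proposition one by one.

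First, the quotients are
\[
A(r)=\frac{q_0}{q_2}=-\frac{Q(r)}{\sigma^4 r^2}\to -\frac{L}{\sigma^4}\in(-\infty,0),\qquad B(r)=\frac{q_1}{q_2}=\frac{N}{r^2}\to 0,
\]
and $q_2(r)\to-\infty$. The quadratic $X^2+\frac{N}{r^2}X-\frac{Q}{\sigma^4r^2}=0$ has product of roots $-Q/(\sigma^4 r^2)\le 0$. It therefore has exactly one non-negative root, namely $g$ in \eqref{g-radial-integrated}; the other root is $-(N+\sqrt{\cdot})/(2r^2)<0$. By Theorem~\ref{thm:positivity} this $g$ is the unique barrier function, and its limit $\sqrt{L}/\sigma^2$ is the unique positive root of $\lambda^2-L/\sigma^4=0$. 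Monotonicity, boundedness, $C^1$ regularity and the endpoint limits are assumed in the statement, so these hypotheses are immediate.

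The key computation is the dissipativity condition \eqref{cond-extra-general}. Here
\[
q_1+q_2 g=-\frac{N}{r}-rg=-\frac{N+\sqrt{N^2+4r^2Q/\sigma^4}}{2r}\le -\frac{N}{r},
\]
which is strictly negative for every $r>0$. This is where I expect the main obstacle: a uniform $\delta>0$ requires a lower bound $rg(r)\ge c>0$ for large $r$, and this follows from $g\to\sqrt{L}/\sigma^2>0$. On any bounded interval $(0,r_1]$, $N/r\ge N/r_1$. Hence $\delta=\min\{N/r_1,\,c\}$ works for all $r>0$. Near $r=0$ the bound actually blows up favourably. Note also that in step d of the Proposition's proof one uses $q_1+q_2(y+g)\le q_1+q_2 g$, which only needs $y\ge 0$ and $q_2\le0$, and that holds here.

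Second, step a of the Proposition (local existence of the solution with $y(0)=0$) is singular at $r=0$, so it cannot be invoked directly. Instead I would supply it via Theorem~\ref{thm:existence_rigorous} and Corollary~\ref{lem:transformation}. Theorem~\ref{thm:existence_rigorous} gives $u$ with $u(0)=1$, $u'(0)=0$ and $u\ge1$ on every $[0,R)$; Theorem~\ref{thm:existence-uniqueness-Lgeneral}(i) with $a=(N-1)/r$, $b=-Q/\sigma^4\le0$ supplies the global continuation. Setting $\varphi=u'/(ru)$ then gives the unique regular solution in $C^1([0,\infty))$ with $\varphi(0)=0$. If $L_0=0$ is allowed, Theorem~\ref{thm:existence_rigorous} does not literally apply, and I would instead redo the expansion $u'(r)=\sigma^{-4}r^{1-N}\int_0^r t^{N-1}Q(t)u(t)\,dt=o(r^3)$ to get $\varphi=o(r^2)$.

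Positivity of $\varphi$ near $0$ follows from this integral formula whenever $Q\not\equiv0$ near $0$. With that in hand, steps c--e of Proposition~\ref{prop:asymptotics_gen} apply verbatim and yield properties 1--3. For the limit in step e, I would divide by $q_2=-r\to-\infty$ and use that $\varphi'$ cannot stay bounded away from $0$ for a bounded monotone $\varphi$. Since $A+B\varphi+\varphi^2=-\varphi'/r\to0$ along a sequence, and the left side converges, this gives $\ell^2=L/\sigma^4$, i.e.\ $\ell=\sqrt{L}/\sigma^2$.
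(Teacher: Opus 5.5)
Your checks of the hypotheses hold up. The bound $q_1+q_2g\le -N/r$ together with $rg(r)\to\infty$ gives a uniform $\delta$ on all of $(0,\infty)$. Your corrected inequality for step d is the right one, and your subsequence argument for the limit is correct. The gap is property~1, on which property~2 depends: you prove only $\varphi>0$ near the origin and then run step c of Proposition~\ref{prop:asymptotics_gen} verbatim. That step, and with it the paper's own proof of the corollary, fails in the radial setting, for two reasons.

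\emph{Starting point.} Here $\varphi(0)=0$ and also $g(0^+)=0$ by hypothesis. So positivity of $\varphi$ near $0$ says nothing about whether $\varphi<g$ there, and the assertion in step c that $0<y<g$ on $(x_0,x_0+\varepsilon)$ has no justification when $g(x_0^+)=0$. For $L_0>0$ you could compare the expansions $\varphi\sim L_0r^2/((N+2)\sigma^4)$ and $g\sim L_0r^2/(N\sigma^4)$, but for $L_0=0$ there is no leading term to compare.

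\emph{First contact.} The argument uses $g'(x_1)>0$, which does not follow from $g$ being increasing and $C^1$. At a first contact from below one only gets $(g-\varphi)'(x_1)\le 0$, i.e.\ $g'(x_1)=0$, and that is not a contradiction.

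The missing idea is a direct representation of $h:=g-\varphi$. From \eqref{y-prim-general}, $h'=g'-\bigl(\tfrac{N}{r}+r\varphi+rg\bigr)h$. Since $r\varphi=u'/u$, the integrating factor is explicit: with $G(r)=\int_0^r s\,g(s)\,ds$,
\[
\bigl(r^{N}u(r)e^{G(r)}h(r)\bigr)'=r^{N}u(r)e^{G(r)}g'(r),\qquad r>0.
\]
Integrate from $\rho$ to $r$ and let $\rho\downarrow0$; the boundary term $\rho^{N}u(\rho)e^{G(\rho)}h(\rho)$ tends to $0$. This gives
\[
g(r)-\varphi(r)=\frac{1}{r^{N}u(r)e^{G(r)}}\int_{0}^{r}s^{N}u(s)e^{G(s)}g'(s)\,ds>0 .
\]
The integral is positive because $g'\ge 0$ is continuous and not identically zero on $(0,r)$, since $g(r)>0=g(0^+)$. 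Combine this with $\varphi>0$ from your integral formula; note $Q=\sigma^{4}(r^{2}g^{2}+Ng)>0$ on $(0,\infty)$. This yields property~1 for every $r>0$ with no contact argument, and your treatment of properties~2 and~3 then goes through.

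Separately, your existence step tacitly assumes that $L_0=\lim_{r\downarrow0}Q(r)/r^{2}$ exists, which the corollary does not assume. This matters. Take $g(r)=\frac{\sqrt{L}}{\sigma^{2}}\frac{\sqrt{r}}{1+\sqrt{r}}$ and $Q=\sigma^{4}(r^{2}g^{2}+Ng)$. All the stated hypotheses hold, yet $\varphi(r)\sim c\sqrt{r}$ near $0$, so $\varphi\notin C^{1}([0,\infty))$. A hypothesis such as $\limsup_{r\downarrow0}Q(r)/r^{2}<\infty$ has to be added.
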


\begin{proof}
We rewrite \eqref{R-radial-integrated} in the general form of the Riccati
equation:%
\begin{equation*}
\varphi ^{\prime }(r)=q_{0}(r)+q_{1}(r)\varphi (r)+q_{2}(r)\varphi (r)^{2},
\end{equation*}%
where%
\begin{equation*}
q_{0}(r)=\frac{Q(r)}{\sigma ^{4}r},\qquad q_{1}(r)=-\frac{N}{r},\qquad
q_{2}(r)=-r.
\end{equation*}%
Thus,%
\begin{equation*}
q_{0}\geq 0,\qquad q_{2}<0,\qquad q_{1}\in \mathbb{R},
\end{equation*}%
so the structural hypotheses of the general Proposition are satisfied.

\medskip \textbf{1. Behavior of the coefficients at infinity.} Notice that%
\begin{equation*}
\frac{q_{0}(r)}{q_{2}(r)}=-\frac{Q(r)}{\sigma ^{4}r^{2}}\longrightarrow -%
\frac{L}{\sigma ^{4}}=:A<0,
\end{equation*}%
and%
\begin{equation*}
\frac{q_{1}(r)}{q_{2}(r)}=\frac{N}{r^{2}}\longrightarrow 0=:B,\qquad
q_{2}(r)=-r\longrightarrow -\infty .
\end{equation*}%
Dividing the Riccati equation by $r$, the dominant term is%
\begin{equation*}
-\varphi (r)^{2}+\frac{Q(r)}{\sigma ^{4}r^{2}},
\end{equation*}%
and the limiting algebraic equation becomes%
\begin{equation*}
-\lambda ^{2}+\frac{L}{\sigma ^{4}}=0,
\end{equation*}%
which has the unique positive root%
\begin{equation*}
\lambda _{\ast }=\frac{\sqrt{L}}{\sigma ^{2}}.
\end{equation*}

\medskip \textbf{2. Identification of the barrier.} The function $g$ defined
in \eqref{g-radial-integrated} is exactly the positive root of the algebraic
equation%
\begin{equation*}
r^{2}X^{2}+NX-\frac{Q(r)}{\sigma ^{4}}=0,
\end{equation*}%
so it coincides with the barrier $g$ from the general Proposition. The
required hypotheses are satisfied:%
\begin{equation*}
g\in C^{1},\qquad g\ \text{increasing},\qquad 0=g(0^{+})<g(r)<\frac{\sqrt{L}%
}{\sigma ^{2}}.
\end{equation*}

\textbf{3. Automatic verification of condition \eqref{cond-extra-general}.}
In the radial case, 
\begin{equation*}
q_{1}(r)+q_{2}(r)g(r)=-\frac{N}{r}-r\,g(r).
\end{equation*}%
Substituting the formula for $g(r)$ from \eqref{g-radial-integrated}, we
obtain 
\begin{equation*}
q_{1}(r)+q_{2}(r)g(r)=-\frac{N}{2r}-\frac{1}{2r}\sqrt{\,N^{2}+\dfrac{%
4r^{2}Q(r)}{\sigma ^{4}}\,}.
\end{equation*}
Notice that for all $r>0$, we have 
\begin{equation*}
q_{1}(r)+q_{2}(r)g(r)\leq -N/2r<0.
\end{equation*}%
To verify \eqref{cond-extra-general} on $[r_{0},\infty )$ for any $r_{0}>0$,
we examine the asymptotic behavior as $r\rightarrow \infty $. Since $%
\lim_{r\rightarrow \infty }Q(r)/r^{2}=L$, it follows that 
\begin{equation*}
\lim_{r\rightarrow \infty }\bigl(q_{1}(r)+q_{2}(r)g(r)\bigr)%
=\lim_{r\rightarrow \infty }\left( \frac{-N}{2r}-\frac{1}{2}\sqrt{\frac{N^{2}%
}{r^{2}}+\frac{4Q(r)}{\sigma ^{4}}}\right) =-\infty .
\end{equation*}%
Being continuous on $[r_{0},\infty )$ and tending to $-\infty $ at infinity,
the function $r\longmapsto q_{1}(r)+q_{2}(r)g(r)$ necessarily attains a
negative maximum. Therefore, there exists a constant $\delta >0$ such that 
\begin{equation*}
q_{1}(r)+q_{2}(r)g(r)\leq -\delta <0,\qquad \forall r\in \lbrack
r_{0},\infty ),
\end{equation*}%
and condition~\eqref{cond-extra-general} is automatically satisfied.

\medskip \textbf{4. Application of the general Proposition.} All the
hypotheses of the Proposition ``Asymptotics for the general Riccati
equation'' are fulfilled, thus the regular solution $\varphi$ of equation %
\eqref{R-radial-integrated} satisfies:%
\begin{equation*}
0<\varphi (r)<g(r),\qquad \varphi ^{\prime }(r)>0,\qquad \lim_{r\rightarrow
\infty }\varphi (r)=\lambda _{\ast }.
\end{equation*}

\medskip \textbf{5. Conclusion.} Substituting the value of $\lambda _{\ast }$
obtained in step 1, we get%
\begin{equation*}
\lim_{r\rightarrow \infty }\varphi (r)=\frac{\sqrt{L}}{\sigma ^{2}},
\end{equation*}%
which completes the proof.
\end{proof}

\begin{proposition}[The case $L=0$ under the monotonic barrier hypothesis]
\label{propzero} Assume that $Q\in C([0,\infty ))$ is a non-negative
function with $Q(0)=0$ and that 
\begin{equation*}
L:=\lim_{r\rightarrow \infty }\frac{Q(r)}{r^{2}}=0.
\end{equation*}%
Define the barrier function 
\begin{equation*}
g(r)=\frac{\sqrt{N^{2}+\frac{4r^{2}Q(r)}{\sigma ^{4}}}-N}{2r^{2}},\qquad r>0.
\end{equation*}%
Suppose further that $g$ is monotonically increasing on $(0,\infty )$. Then $%
Q\equiv 0$, and the unique regular solution $\varphi \in C^{1}([0,\infty ))$
of the Riccati equation 
\begin{equation*}
\varphi ^{\prime }(r)=-\frac{1}{r}\Bigl(r^{2}\varphi (r)^{2}+N\varphi (r)-%
\tfrac{Q(r)}{\sigma ^{4}}\Bigr),\qquad r>0,\qquad \varphi (0)=0,
\end{equation*}%
is identically zero: 
\begin{equation*}
\varphi (r)\equiv 0,\qquad \forall r\geq 0,
\end{equation*}%
and, in particular, $\lim_{r\rightarrow \infty }\varphi (r)=0$.
\end{proposition}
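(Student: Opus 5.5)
The plan is to show first that $g\equiv 0$ on $(0,\infty)$, then to translate this into $Q\equiv 0$, and finally to identify the Riccati solution. The key observation is that $g\geq 0$ everywhere. Indeed, $Q\geq 0$ gives $\sqrt{N^{2}+4r^{2}Q(r)/\sigma^{4}}\geq N$, so $g(r)\geq 0$ for all $r>0$. Next I will compute $\lim_{r\to\infty}g(r)$ using $L=0$. Rationalizing,
\[
g(r)=\frac{2Q(r)/\sigma^{4}}{\sqrt{N^{2}+4r^{2}Q(r)/\sigma^{4}}+N}\le \frac{2Q(r)}{\sigma^{4}\,\sqrt{4r^{2}Q(r)/\sigma^{4}}}=\frac{1}{\sigma^{2}}\sqrt{\frac{Q(r)}{r^{2}}}
\]
whenever $Q(r)>0$, and $g(r)=0$ whenever $Q(r)=0$. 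Hence $0\le g(r)\le \sigma^{-2}\sqrt{Q(r)/r^{2}}\to 0$ as $r\to\infty$.

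Now I use monotonicity. A non-decreasing function on $(0,\infty)$ satisfies $g(r)\le \lim_{s\to\infty}g(s)=0$ for every $r>0$. Combined with $g\ge 0$, this gives $g\equiv 0$. From the rationalized form, $g(r)=0$ forces $Q(r)=0$, because the denominator is positive. So $Q\equiv 0$ on $(0,\infty)$, and $Q(0)=0$ holds by hypothesis. This step is the only one with content: the main point is to extract the limit $0$ of $g$ from $L=0$ without any positivity assumption on $Q$. The rationalized bound handles this cleanly.

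With $Q\equiv 0$ the equation becomes $\varphi'=-r\varphi^{2}-\tfrac{N}{r}\varphi$, and $\varphi\equiv 0$ is clearly a $C^{1}([0,\infty))$ solution with $\varphi(0)=0$. For uniqueness of the regular solution I will invoke Theorem~\ref{thm:existence_rigorous}. Its hypothesis $L_0>0$ fails here, so I will instead argue through the linear problem (S) and Corollary~\ref{lem:transformation}. By Theorem~\ref{thm:existence-uniqueness-Lgeneral} with $b\equiv 0$, the unique regular solution of $u''+\frac{N-1}{r}u'=0$ with $u(0)=1$, $u'(0)=0$ is $u\equiv 1$, and therefore $\varphi=u'/(ru)\equiv 0$.

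One could also argue uniqueness directly. Any regular $\varphi$ corresponds via \eqref{u-from-y} to a positive regular $u$ solving the linear equation with $u(0)=1$, $u'(0)=0$. Uniqueness for that linear initial value problem then yields $u\equiv1$, and so $\varphi\equiv0$. The limit statement $\lim_{r\to\infty}\varphi(r)=0$ follows trivially.
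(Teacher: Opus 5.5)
Your proof is correct and follows the same route as the paper's: $g\ge 0$, then $g(r)\to 0$ as $r\to\infty$, then monotonicity forces $g\equiv 0$, hence $Q\equiv 0$ and the regular solution is $\varphi\equiv 0$. Your rationalized bound $0\le g(r)\le \sigma^{-2}\sqrt{Q(r)/r^{2}}$ is a sounder justification of $g\to 0$ than the paper's expansion of $\sqrt{N^{2}+t}$ about $t=0$ with $t=4r^{2}Q(r)/\sigma^{4}$, since that expansion tacitly needs $r^{2}Q(r)\to 0$, which does not follow from $L=0$ (take $Q(r)=r$); likewise, for the last step you need not rely on Theorem~\ref{thm:existence-uniqueness-Lgeneral}, whose proof integrates $a=(N-1)/r$ from $0$, because $(r^{N-1}u')'=0$ together with $u'(0)=0$ gives $u'\equiv 0$ at once.
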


\begin{proof}
From the general existence theory (Frobenius analysis at $r=0$), there
exists a unique regular solution $\varphi \in C^{1}([0,\infty ))$ with $%
\varphi (0)=0$. Local analysis at the origin yields 
\begin{equation*}
\varphi (r)\sim \frac{L_{0}}{4\sigma ^{4}(N+2)}r^{3}\quad \text{as }%
r\rightarrow 0^{+},
\end{equation*}%
where 
\begin{equation*}
L_{0}:=\lim_{r\rightarrow 0^{+}}\frac{Q(r)}{r^{2}}\in \lbrack 0,\infty ).
\end{equation*}
In particular, for small $r>0$, we have $\varphi (r)\geq 0$.

The function $g$ is defined as the positive root of the algebraic equation 
\begin{equation*}
r^{2}X^{2}+NX-\frac{Q(r)}{\sigma ^{4}}=0.
\end{equation*}%
For $r\rightarrow \infty $, using $Q(r)/r^{2}\rightarrow 0$ and the
expansion 
\begin{equation*}
\sqrt{N^{2}+t}=N+\frac{t}{2N}+o(t)\text{ as }t\rightarrow 0,
\end{equation*}
with $t=\frac{4r^{2}Q(r)}{\sigma ^{4}}$, we obtain 
\begin{equation*}
g(r)=\frac{1}{2r^{2}}\left( N+\frac{2r^{2}Q(r)}{N\sigma ^{4}}%
+o(r^{2}Q(r))-N\right) =\frac{Q(r)}{N\sigma ^{4}}+o(Q(r)).
\end{equation*}%
Thus, $g(r)\sim \frac{Q(r)}{N\sigma ^{4}}$ as $r\rightarrow \infty $.

By the assumption that $g$ is monotonically increasing on $(0,\infty )$, and
noting that $g(r)\sim \frac{L_{0}}{N\sigma ^{4}}r^{2}$ near the origin, $g$
must be non-negative. If $Q$ were not identically zero, there would exist a
sequence $r_{n}\rightarrow \infty $ with $Q(r_{n})>0$, implying $%
g(r_{n})\sim \frac{Q(r_{n})}{N\sigma ^{4}}>0$. Since $g$ is increasing, this
would mean $g(r)$ stays bounded away from zero for large $r$. However, the
hypothesis $L=0$ implies that if $g$ has a limit, it must be zero. The only
monotonic increasing function that starts at $0$ and whose limit is $0$ is
the identically zero function.

Therefore, $g(r)\equiv 0$ for all $r>0$, which implies $Q(r)\equiv 0$. In
this case, the Riccati equation reduces to 
\begin{equation*}
\varphi ^{\prime }(r)=-\frac{1}{r}(r^{2}\varphi ^{2}+N\varphi ),\qquad
\varphi (0)=0.
\end{equation*}%
The uniquely determined regular solution is $\varphi \equiv 0$, and thus $%
\lim_{r\rightarrow \infty }\varphi (r)=0$.
\end{proof}

\subsection{Analysis of the Radial Asymptotic Behavior}
\label{sec:asymptotics}

Building upon the theoretical framework established in Section \ref%
{sec:asymptotics_theory}, we now focus on the specific physical implications
for the radial system. The sharp convergence of the drift $\phi(r)$ to the
limit $\sqrt{L}/\sigma^2$ establishes the long-term stability of the optimal
control process and the steady-state behavior of the phase function.

As demonstrated in Corollary \ref{cor:radial_asymptotics}, for any cost
function $Q(r)$ with a well-defined quadratic growth rate $L$ at infinity,
the optimal drift settles into a constant field. This result corresponds to
the recovery of the harmonic oscillator's ground state properties in the
far-field limit, where the stochastic fluctuations are balanced by the
restorative potential of the cost.

Furthermore, the uniform bound 
\begin{equation*}
0<\varphi (r)<g(r)<\sqrt{L}/\sigma ^{2}
\end{equation*}
(from Corollary \ref{cor:radial_asymptotics}) ensures that the system never
exhibits explosive or singular behavior at infinity, a property critical for
the global well-posedness of the stochastic control problem on unbounded
domains. The geometric stability of $\phi $ in the phase plane confirms that
the stochastic particle is effectively "trapped" within a stable feedback
regime, preventing the divergence of the expected path cost.

\section{The Bounded Domain and Global Geometric Properties}

\label{sec:dirichlet}

In many engineering and physical applications, the system is constrained
within a finite domain $\Omega = B_R(0)$. This section establishes the
well-posedness of boundary value problems and explores the inherited
geometric properties of the solutions.

\subsection{Convexity of the Auxiliary Wave Function}

One of the most remarkable properties of the triality system is that the
positive growth of the cost function $b$ translates directly into the
geometric convexity of the Schr\"{o}dinger state $u$.

\begin{theorem}[Strict Convexity of $u$]
\label{thm:convex_u} Let $Q\in C([0,R])$ be positive ($Q(r)>0$) and
monotonically non-decreasing ($Q^{\prime }(r)\geq 0$) on $(0,R)$. Then, the
regular solution $u$ satisfying \eqref{eq:lin_ode} is strictly convex, i.e., 
$u^{\prime \prime }(r)>0$ for all $r\in \lbrack 0,R)$.
\end{theorem}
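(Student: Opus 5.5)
The plan is to bypass the Riccati variable entirely and work with the divergence form of \eqref{eq:lin_ode}, which is already used in Step~3 of the proof of Theorem~\ref{thm:existence_rigorous}:
\[
\bigl(r^{N-1}u'(r)\bigr)'=\frac{1}{\sigma^{4}}\,r^{N-1}Q(r)\,u(r).
\]
From that theorem I take the following facts about the regular solution: $u\in C^{2}([0,R))$, $u(0)=1$, $u'(0)=0$, and $u$ is non-decreasing with $u\ge 1$. Integrating from $0$ to $r$ with $u'(0)=0$ gives the representation
\[
u'(r)=\frac{r^{1-N}}{\sigma^{4}}\int_{0}^{r}s^{N-1}Q(s)u(s)\,ds,\qquad r\in(0,R).
\]
Since $Q>0$ on $(0,R)$ and $u\ge 1$, this already shows $u'(r)>0$, so $u$ is strictly increasing.

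Next I solve \eqref{eq:lin_ode} for the second derivative:
\[
u''(r)=\frac{Q(r)u(r)}{\sigma^{4}}-\frac{N-1}{r}\,u'(r).
\]
The key step is to bound the integral in the representation of $u'$. The product $s\mapsto Q(s)u(s)$ is non-negative and non-decreasing on $(0,R)$, because both factors are. Hence
\[
\int_{0}^{r}s^{N-1}Q(s)u(s)\,ds\le Q(r)u(r)\,\frac{r^{N}}{N}.
\]
This yields $\frac{N-1}{r}u'(r)\le \frac{N-1}{N}\,\frac{Q(r)u(r)}{\sigma^{4}}$. Substituting into the formula for $u''$ gives
\[
u''(r)\ge \frac{Q(r)u(r)}{N\sigma^{4}}>0,\qquad r\in(0,R).
\]
For $N=1$ the drift term is absent and the inequality is immediate. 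This monotonicity comparison is the only real obstacle: one must make sure $u$ is non-decreasing before using the monotonicity of $Qu$, and this is exactly what the positivity argument of Theorem~\ref{thm:existence_rigorous} supplies.

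Finally, at the endpoint $r=0$ I pass to the limit using $u'(r)/r\to u''(0)$ in the ODE. This gives $N u''(0)=Q(0)/\sigma^{4}$, that is, $u''(0)=Q(0)/(N\sigma^{4})$. Strict convexity at the origin therefore requires $Q(0)>0$, and I read the hypothesis ``$Q(r)>0$'' as holding on $[0,R)$. If only $Q>0$ on $(0,R)$ is assumed, the conclusion $u''>0$ still holds on $(0,R)$, and $u$ remains strictly convex on $[0,R)$ in the sense of a strictly increasing derivative. I would add a short remark to this effect, since under (H1) one has $Q(0)=0$.
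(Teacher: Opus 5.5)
Your proof is correct and follows the paper's argument step for step. Both use the divergence form $(r^{N-1}u')'=\sigma^{-4}r^{N-1}Qu$, the integral representation of $u'$, and the monotonicity of the product $Qu$ to reach $u''(r)\ge Q(r)u(r)/(N\sigma^{4})>0$ for $r>0$.

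Your extra endpoint computation $u''(0)=Q(0)/(N\sigma^{4})$ is also correct and goes beyond the paper, whose estimate only covers $r>0$. When $Q(0)=0$, as under $\mathrm{(H1)}$ and consistent with the expansion \eqref{eq:u_expansion_origin}, one in fact has $u''(0)=0$. So at the origin the statement holds only in your weaker sense that $u'$ is strictly increasing on $[0,R)$.
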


\begin{proof}
We utilize the integral representation of the first derivative to bound the
second derivative. From the linear ODE: 
\begin{equation}
u^{\prime \prime }(r)+\frac{N-1}{r}u^{\prime }(r)-\frac{Q(r)}{\sigma ^{4}}%
u(r)=0.  \label{eq:u_double_prime_base}
\end{equation}%
Multiplying the ODE for $u$ by $r^{N-1}$, we obtain the self-adjoint form 
\begin{equation*}
(r^{N-1}u^{\prime })^{\prime }=\frac{Q\left( r\right) }{\sigma ^{4}}r^{N-1}u.
\end{equation*}%
Integrating from $0$ to $r$ and using $u^{\prime }(0)=0$: 
\begin{equation}
u^{\prime }(r)=\frac{1}{\sigma ^{4}r^{N-1}}\int_{0}^{r}s^{N-1}Q(s)u(s)ds.
\label{eq:u_prime_integral}
\end{equation}%
Substituting \eqref{eq:u_prime_integral} back into %
\eqref{eq:u_double_prime_base}: 
\begin{equation}
u^{\prime \prime }(r)=\frac{1}{\sigma ^{4}}\left( Q(r)u(r)-\frac{N-1}{r^{N}}%
\int_{0}^{r}s^{N-1}Q(s)u(s)ds\right) .
\end{equation}%
Since $Q(s)$ and $u(s)$ are non-decreasing and positive, their product $%
f(s)=Q(s)u(s)$ is also non-decreasing. Therefore, for all 
\begin{equation*}
s\in \lbrack 0,r],\text{ }f(s)\leq f(r).
\end{equation*}%
Estimating the integral: 
\begin{equation}
\int_{0}^{r}s^{N-1}Q(s)u(s)ds\leq Q(r)u(r)\int_{0}^{r}s^{N-1}ds=\frac{%
Q(r)u(r)r^{N}}{N}.
\end{equation}%
Thus, we have the lower bound: 
\begin{equation}
u^{\prime \prime }(r)\geq \frac{Q(r)u(r)}{\sigma ^{4}}\left( 1-\frac{N-1}{N}%
\right) =\frac{Q(r)u(r)}{N\sigma ^{4}}.
\end{equation}%
Since $b>0$ and $u\geq 1$, it follows that $u^{\prime \prime }(r)>0$ for all 
$r$, establishing strict convexity.
\end{proof}

\subsection{Implications for the Riccati Solution}

In this framework, the Riccati solution $\phi(r)$ is uniquely determined by %
\eqref{eq:ric_ode} and $\phi(0)=0$. The boundary condition $u(R) = U_R$
imposes an integral constraint on the Riccati function: 
\begin{equation}  \label{eq:integral_constraint}
\int_0^R s \phi(s) \, ds = \ln(U_R).
\end{equation}
This relation expresses the cumulative effect of the nonlinear drift over
the ball $B_R(0)$. In the context of stochastic control and barrier options,
the constant $U_R$ characterizes the "barrier value" which must be reached
at the frontier of the domain.

\section{Sensitivity Analysis and Singular Perturbations in $\protect\sigma $
\label{sec:sensitivity}}

The diffusion coefficient $\sigma $ serves as the bridge between
deterministic and stochastic dynamics. In the limit $\sigma \rightarrow 0$,
we encounter a singular perturbation problem where the second-order terms
(Laplacian) are neglected in favor of the potential term (see also \cite%
{coddington1952,omalley1969,verhulst2005}). In the opposite regime
$\sigma\to\infty$, the diffusion overwhelms the cost contribution and the
Riccati drift collapses uniformly to zero. The next theorem provides a
unified rigorous statement of these two regimes.

\begin{theorem}[$\sigma$-sensitivity theorem]\label{thm:sigma}
Let $Q\in C^{1}([0,\infty))$ satisfy $Q(r)>0$ for $r>0$ and the standing
assumptions $\mathrm{(H1)}$--$\mathrm{(H3)}$. For $\sigma>0$, let
$\phi_{\sigma}\in C^{1}([0,\infty))$ denote the unique regular solution of
the radial Riccati equation \eqref{eq:ric_ode} with $\phi_{\sigma}(0)=0$.
Then the following two regimes hold:
\begin{enumerate}
\item[\textbf{(i)}] (Vanishing-noise / WKB eikonal limit) For every fixed
$r>0$,
\begin{equation}
\lim_{\sigma\downarrow 0}\sigma^{2}\,\phi_{\sigma}(r)\;=\;
\frac{\sqrt{Q(r)}}{r}.
\label{eq:sigma_zero_limit}
\end{equation}
\item[\textbf{(ii)}] (High-noise saturation) For every $R>0$,
\begin{equation}
\sup_{r\in[0,R]}\phi_{\sigma}(r)\;\leq\;
\frac{R^{2}\,\|Q\|_{L^{\infty}([0,R])}}{N\,\sigma^{4}}\;\xrightarrow[\sigma\to\infty]{}\;0.
\label{eq:sigma_infty_decay}
\end{equation}
\end{enumerate}
The map $\sigma\mapsto\phi_{\sigma}(r)$ is continuous on $(0,\infty)$ for
each $r\geq 0$, and the convergence in \eqref{eq:sigma_zero_limit} is
uniform on compact subsets of $(0,\infty)$.
\end{theorem}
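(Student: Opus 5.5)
The plan is to rescale the Riccati unknown so that $\sigma$ appears only as a singular-perturbation parameter. For (ii) I would work directly on the Schr\"odinger side. Continuity in $\sigma$ comes essentially for free from the linear theory.

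\emph{Continuity.} For continuity of $\sigma\mapsto\phi_\sigma(r)$, the Volterra representation \eqref{Volterra-correct} for $u_\sigma$ is the natural tool. Its kernel $K$, built with $a(r)=(N-1)/r$ and $b(r)=-Q(r)/\sigma^4$, depends continuously on $\sigma$. The successive-approximation series of Theorem~\ref{thm:existence-uniqueness-Lgeneral} converges locally uniformly in $\sigma$ on compact $\sigma$-sets, so $u_\sigma$ and $u'_\sigma$ are jointly continuous. Since $u_\sigma\ge 1$, the quotient $\phi_\sigma=u_\sigma'/(ru_\sigma)$ is continuous in $\sigma$ for $r>0$. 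At $r=0$ we have $\phi_\sigma(0)=0$ for every $\sigma$.

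\emph{High-noise bound (ii).} Use \eqref{eq:u_prime_integral} together with the monotonicity $u_\sigma'\ge 0$ from the proof of Theorem~\ref{thm:existence_rigorous}. This gives
\[
0\le u_\sigma'(r)\le \frac{u_\sigma(r)\,\|Q\|_{L^\infty([0,R])}}{\sigma^4 r^{N-1}}\int_0^r s^{N-1}\,ds=\frac{r\,u_\sigma(r)\,\|Q\|_{L^\infty([0,R])}}{N\sigma^4},
\]
and hence $0\le\phi_\sigma(r)\le \|Q\|_{L^\infty([0,R])}/(N\sigma^4)$ on $[0,R]$. This already decays like $\sigma^{-4}$, which is the substance of (ii). It implies the displayed bound \eqref{eq:sigma_infty_decay} whenever $R\ge1$. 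For $R<1$ I would either sharpen the estimate through (H2) (using $Q(s)\le C s^2$ near $0$) or read the factor $R^2$ as $\max(R^2,1)$; I flag this as a point to check against the author's normalization.

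\emph{Vanishing-noise limit (i).} Set $\psi_\sigma:=\sigma^2\phi_\sigma$ and $G_\sigma:=\sigma^2 g_\sigma$, where $g_\sigma$ is the barrier \eqref{g-radial-integrated}. Then
\[
G_\sigma(r)=\frac{\sqrt{N^2\sigma^4+4r^2Q(r)}-N\sigma^2}{2r^2}\longrightarrow \frac{\sqrt{Q(r)}}{r},
\]
uniformly on compacts of $(0,\infty)$. The factorization of Proposition~\ref{prop:asymptotics_gen}, step b, becomes
\[
\sigma^2\psi_\sigma'=-r\,(\psi_\sigma-G_\sigma)\Bigl(\psi_\sigma+G_\sigma+\tfrac{N\sigma^2}{r^2}\Bigr).
\]
\begin{itemize}
\item First, $\psi_\sigma\ge0$, and $\psi_\sigma$ cannot exceed $\max_{[0,R]}G_\sigma$: above that level $\psi_\sigma'<0$. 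So $\psi_\sigma$ is bounded uniformly in $\sigma\le1$ on $[0,R]$.
\item Next, fix $[r_0,R]\subset(0,\infty)$. Because $Q>0$ there, $G_\sigma\ge c>0$ uniformly in small $\sigma$, so the factor $r(\psi_\sigma+G_\sigma+N\sigma^2/r^2)$ is at least $r_0c$.
\item Writing $h=\psi_\sigma-G_\sigma$, the equation becomes $\sigma^2 h'=-\kappa(r)h-\sigma^2 G_\sigma'$ with $\kappa\ge r_0c$. Here $Q\in C^1$ is used to bound $G_\sigma'$ uniformly on $[r_0/2,R]$.
\item Variation of constants then yields
\[
|h(r)|\le |h(r_0/2)|\,e^{-r_0c(r-r_0/2)/\sigma^2}+\frac{\sigma^2\sup|G_\sigma'|}{r_0c}.
\]
\item This tends to $0$ uniformly on $[r_0,R]$, which gives \eqref{eq:sigma_zero_limit} with uniform convergence on compacts.
\end{itemize}

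The main obstacle is this last step. One needs uniform-in-$\sigma$ lower bounds on the attraction rate and upper bounds on $G_\sigma'$ away from the origin, and one must handle the initial layer near $r=0$, where $Q$ vanishes and the fast attraction degenerates. The remedy is to start the Gronwall estimate at $r_0/2$, using only the uniform boundedness of $\psi_\sigma$ there.
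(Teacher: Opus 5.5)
Your argument is correct, and for (i) it takes a genuinely different and stronger route than the paper. The paper supports (i) only by the formal WKB ansatz $u_\sigma=e^{W_\sigma}$, $W_\sigma=\sigma^{-2}S+S_1+\mathcal{O}(\sigma^2)$. From it the paper reads off the eikonal equation $(S')^2=Q$ and hence $\sigma^2\phi_\sigma=\sigma^2W_\sigma'/r\to\sqrt{Q}/r$. No remainder is controlled, so neither the limit nor its uniformity on compacts is actually proved there.

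You instead rescale to $\psi_\sigma=\sigma^2\phi_\sigma$. You reuse the algebraic equilibrium and the factorization from Proposition~\ref{prop:asymptotics_gen}, and prove a Tikhonov-type attraction to the slow manifold $\psi=G_\sigma$ away from the origin. This gives uniform convergence on every $[r_0,R]$ with an explicit rate $\mathcal{O}(\sigma^2)+\mathcal{O}(e^{-c/\sigma^2})$, and it does not use (H3). The WKB route, once justified, would also deliver the correction $S_1$, but justifying it requires an estimate of exactly your type.

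Three details to tighten:
\begin{itemize}
\item The $\sigma$-uniform bound on $\psi_\sigma$ follows from $G_\sigma=2Q/\bigl(\sqrt{N^2\sigma^4+4r^2Q}+N\sigma^2\bigr)\le\sqrt{Q(r)}/r$, which is bounded on $(0,R]$ by (H2).
\item On $[r_0/2,R]$ the attraction rate is $\ge r_0c/2$, not $r_0c$.
\item For continuity in $\sigma$, the kernel \eqref{K-def} is written with $A(x)=\int_0^x (N-1)s^{-1}\,ds$, which diverges for $N\ge2$. Run the iteration instead on $u_\sigma(r)=1+\sigma^{-4}\int_0^r t^{1-N}\int_0^t s^{N-1}Q(s)u_\sigma(s)\,ds\,dt$. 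Its kernel involves only $(s/t)^{N-1}\le 1$, and the iterates converge on each $[0,R]$ uniformly for $\sigma$ in compacts.
\end{itemize}

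For (ii) you follow the paper's route: the integral representation of $u_\sigma'$ plus monotonicity of $u_\sigma$. Your arithmetic is the right one. Since $r^{-N}\int_0^r s^{N-1}\,ds=1/N$, you get $0\le\phi_\sigma\le\|Q\|_{L^\infty([0,R])}/(N\sigma^4)$ on $[0,R]$. The paper's version of this step instead obtains $\|Q\|_\infty r/(N\sigma^4)$.

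Your flag on the factor $R^2$ is justified, but the first remedy you propose (sharpening via (H2)) cannot succeed, because the displayed inequality is false for small $R$. Since $u_\sigma\to 1$ locally uniformly as $\sigma\to\infty$, one has $\sigma^4\phi_\sigma(R)\to R^{-N}\int_0^R s^{N-1}Q(s)\,ds$. For $Q(r)=r^2$ this limit equals $R^2/(N+2)$, which exceeds the claimed $R^4/N$ whenever $R^2<N/(N+2)$.

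Concretely, take $N=2$ and $R=\tfrac12$. The integral representation and $u_\sigma\ge1$ give $\phi_\sigma(\tfrac12)\ge 1/\bigl(16\sigma^4u_\sigma(\tfrac12)\bigr)$. This is $>1/(32\sigma^4)$ as soon as $u_\sigma(\tfrac12)<2$, e.g.\ for all $\sigma\ge 1$.

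So (ii) should be stated as your bound $\|Q\|_{L^\infty([0,R])}/(N\sigma^4)$. It implies the printed inequality for $R\ge 1$ and gives the $\sigma^{-4}$ decay for every $R$.
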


The proof of \eqref{eq:sigma_zero_limit} is based on the WKB expansion
developed below, while \eqref{eq:sigma_infty_decay} follows from the
integral representation of $\phi$ established in
Section~\ref{sec:dirichlet}. The continuity in $\sigma$ is a direct
consequence of the continuous dependence of solutions of \eqref{eq:lin_ode}
on the parameter, see \cite{coddingtonlevinson1955, hartman2002}.

\subsubsection{The Vanishing Noise Limit and the WKB Expansion}

As $\sigma \rightarrow 0$, the wave function $u$ exhibits rapid growth. To
analyze this, suppose $Q\in C^{1}([0,\infty ))$ with $Q(r)>0$ for $r>0$, and
make the substitution 
\begin{equation*}
u_{\sigma }(r)=\exp \left( W_{\sigma }(r)\right) ,
\end{equation*}%
where $W_{\sigma }$ satisfies: 
\begin{equation}
W_{\sigma }^{\prime \prime }(r)+(W_{\sigma }^{\prime }\left( r\right) )^{2}+%
\frac{N-1}{r}W_{\sigma }^{\prime }(r)=\frac{Q(r)}{\sigma ^{4}}.
\label{eq:wkb_ansatz}
\end{equation}%
We seek a formal expansion 
\begin{equation*}
W_{\sigma }(r)=\frac{1}{\sigma ^{2}}S(r)+S_{1}(r)+\mathcal{O}(\sigma ^{2})%
\text{ as }\sigma \rightarrow 0.
\end{equation*}%
Substituting and equating terms order by order:

\begin{itemize}
\item \textbf{Leading order $\sigma ^{-4}$}: $(S^{\prime }\left( r\right)
)^{2}=Q(r)$, so $S^{\prime }(r)=\sqrt{Q(r)}$ (choosing the positive branch
with $S(0)=0$), giving the eikonal phase 
\begin{equation*}
S(r)=\int_{0}^{r}\sqrt{Q(s)}\,ds.
\end{equation*}

\item \textbf{Order $\sigma ^{-2}$}: 
\begin{equation*}
S^{\prime \prime }(r)+\frac{N-1}{r}S^{\prime }(r)+2S^{\prime
}(r)S_{1}^{\prime }(r)=0,
\end{equation*}%
which is a first-order linear ODE for $S_{1}^{\prime }$ with explicit
solution 
\begin{equation*}
S_{1}^{\prime }(r)=-\frac{1}{2}\frac{(r^{N-1}\sqrt{Q(r)})^{\prime }}{r^{N-1}%
\sqrt{Q(r)}}.
\end{equation*}
\end{itemize}

From this, the leading-order behavior of the Riccati solution as $\sigma \to
0$ is given by:

\begin{proposition}[Vanishing Noise Limit]
\label{prop:wkb_limit} Under the assumption $Q\in C^{1}([0,\infty ))$ with $%
Q(r)>0$, the regular solution $\phi _{\sigma }$ satisfies, for each fixed $%
r>0$: 
\begin{equation}
\lim_{\sigma \rightarrow 0}\sigma ^{2}\phi _{\sigma }(r)=\frac{\sqrt{Q(r)}}{r%
}.  \label{eq:phi_small_sigma_rigorous}
\end{equation}
\end{proposition}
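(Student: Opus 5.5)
The plan is to pass to the rescaled logarithmic derivative $\psi_{\sigma}(r):=\sigma^{2}r\phi_{\sigma}(r)=\sigma^{2}u_{\sigma}'(r)/u_{\sigma}(r)$ and to treat the problem as a singularly perturbed first-order ODE. Since $\sigma^{2}\phi_{\sigma}(r)=\psi_{\sigma}(r)/r$, the claim \eqref{eq:phi_small_sigma_rigorous} is equivalent to $\psi_{\sigma}(r)\to\sqrt{Q(r)}$. Let $w=r\phi_{\sigma}$. From \eqref{eq:ric_ode} a direct computation gives $w'=-w^{2}-\frac{N-1}{r}w+Q/\sigma^{4}$. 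With $\varepsilon:=\sigma^{2}$ this becomes
\begin{equation*}
\varepsilon\,\psi_{\sigma}'(r)=Q(r)-\psi_{\sigma}(r)^{2}-\varepsilon\frac{N-1}{r}\psi_{\sigma}(r),\qquad \psi_{\sigma}(0)=0 .
\end{equation*}
By Theorem~\ref{thm:existence_rigorous} and part (i) of Theorem~\ref{thm:triality}, $\psi_{\sigma}\ge 0$ on $[0,\infty)$, because $u_{\sigma}'\ge 0$. The reduced (eikonal) equation $0=Q-\psi^{2}$ has the attracting root $\psi=\sqrt{Q}$, in agreement with the WKB computation $S'=\sqrt{Q}$ above. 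The proof is then a Tikhonov-type barrier argument in the spirit of Proposition~\ref{prop:asymptotics_gen}.

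\textbf{Step 1 (uniform upper bound).} Fix $b>r$ and set $M:=\max_{[0,b]}\sqrt{Q}+1$. Suppose $\psi_{\sigma}$ reaches $M$ for a first time $r_{1}\in(0,b]$. At $r_{1}$ we have $\varepsilon\psi_{\sigma}'\le Q-M^{2}<0$, because the term $-\varepsilon(N-1)\psi_{\sigma}/r$ is nonpositive. This contradicts the crossing, so $0\le\psi_{\sigma}\le M$ on $[0,b]$ for every $\sigma$.

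\textbf{Step 2 (upper barrier near $r$).} Fix $a\in(0,r)$ and $\eta>0$. On $[a,b]$ we have $Q\ge m>0$ and $\sqrt{Q}\in C^{1}([a,b])$, so $|(\sqrt{Q})'|\le K$ there. Put $h_{+}=\sqrt{Q}+\eta$.
\begin{itemize}
\item At any point of $[a,b]$ where $\psi_{\sigma}\ge h_{+}$, we get $\varepsilon\psi_{\sigma}'\le -2\eta\sqrt{m}$, and hence $\psi_{\sigma}'\le -2\eta\sqrt{m}/\varepsilon$.
\item For $\varepsilon$ small this is $<-K\le h_{+}'$. So the set $\{\psi_{\sigma}<h_{+}\}$ is forward invariant on $[a,b]$.
\item If $\psi_{\sigma}(a)\ge h_{+}(a)$, the gap $\psi_{\sigma}-h_{+}$, which is at most $M$ by Step~1, decreases at rate $\gtrsim\eta/\varepsilon$. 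It is therefore closed within a distance $O(\varepsilon/\eta)$ of $a$.
\end{itemize}
Consequently $\psi_{\sigma}\le h_{+}$ on $[a+C\varepsilon,b]$.

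\textbf{Step 3 (lower barrier) and conclusion.} Take $h_{-}=\sqrt{Q}-\eta$, shrinking $\eta$ so that $h_{-}>0$ on $[a,b]$. Where $0\le\psi_{\sigma}\le h_{-}$ we have $Q-\psi_{\sigma}^{2}\ge\eta\sqrt{m}$, and by Step~1 the correction satisfies $\varepsilon(N-1)\psi_{\sigma}/r\le\varepsilon(N-1)M/a$. Hence $\psi_{\sigma}'\ge\eta\sqrt{m}/(2\varepsilon)$ for small $\varepsilon$. This dominates $h_{-}'$, so $\{\psi_{\sigma}>h_{-}\}$ is forward invariant, and starting from $\psi_{\sigma}(a)\ge0$ it is entered within distance $O(\varepsilon/\eta)$. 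Thus $|\psi_{\sigma}-\sqrt{Q}|\le\eta$ on $[a+C\varepsilon,b]$ for all sufficiently small $\sigma$. Since $\eta$ is arbitrary and $r\in(a,b)$, this gives $\psi_{\sigma}(r)\to\sqrt{Q(r)}$, i.e.\ $\sigma^{2}\phi_{\sigma}(r)\to\sqrt{Q(r)}/r$. The estimate is uniform on compact subsets of $(0,\infty)$.

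The main obstacle is the initial boundary layer at $r=0$. There $\psi_{\sigma}(0)=0$ while the degenerate factor $(N-1)/r$ and $Q(0)=0$ destroy uniform attraction. The device above avoids analysing the layer: Step~1 gives an a priori bound that does not see the singular term, and all quantitative barrier estimates are started at a point $a>0$ where $Q$ is bounded below. The only delicate point is to check that the constant in the bound on $\varepsilon(N-1)\psi_{\sigma}/r$ stays uniform in $\sigma$, which Step~1 guarantees.
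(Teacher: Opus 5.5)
Your argument is correct, and it takes a genuinely different route from the paper. The paper gives no proof beyond the formal WKB computation that precedes the proposition. It substitutes $u_\sigma=e^{W_\sigma}$, posits $W_\sigma=\sigma^{-2}S+S_1+\mathcal{O}(\sigma^2)$, and matches powers of $\sigma$ to get the eikonal equation $(S')^2=Q$, keeping the positive branch, plus the transport equation for $S_1'$. It then reads off $\sigma^2\phi_\sigma=\sigma^2W_\sigma'/r\to S'/r=\sqrt{Q}/r$.

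Your $\psi_\sigma=\sigma^2W_\sigma'$ satisfies exactly the paper's equation \eqref{eq:wkb_ansatz} multiplied by $\sigma^4$. Instead of an ansatz, you combine the a priori bound of Step~1 with comparison against the barriers $\sqrt{Q}\pm\eta$. This supplies what the formal route leaves unjustified:
\begin{itemize}
\item that the remainder is small at the level of the derivative, which is needed because $\phi$ involves $u'$;
\item that the regular solution follows the attracting positive root $\sqrt{Q}$ rather than $-\sqrt{Q}$, i.e.\ why the positive branch is the right choice;
\item that the degeneracy at $r=0$, where $Q(0)=0$ and $S_1'$ is singular, does not contaminate the limit at fixed $r>0$.
\end{itemize}
You also get the uniformity on compact subsets of $(0,\infty)$ asserted in Theorem~\ref{thm:sigma}, with an explicit layer width $\mathcal{O}(\sigma^2/\eta)$.

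The WKB route does offer, heuristically, the next-order term $S_1'$, i.e.\ the rate $\sigma^2\phi_\sigma(r)=\sqrt{Q(r)}/r+\mathcal{O}(\sigma^2)$. Your framework can recover this rate rigorously. The gap $\delta=\psi_\sigma-\sqrt{Q}$ satisfies $\sigma^2\delta'=-(\psi_\sigma+\sqrt{Q})\,\delta-\sigma^2\bigl(\tfrac{N-1}{s}\psi_\sigma+(\sqrt{Q})'\bigr)$. On $[a,b]$ you have $\psi_\sigma+\sqrt{Q}\ge\sqrt{m}$, and your Step~1 bound controls the bracket. Variation of constants then gives $|\delta(s)|\le M e^{-\sqrt{m}(s-a)/\sigma^2}+C\sigma^2$ there.

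One small point: justify $\psi_\sigma\ge 0$ directly from $r^{N-1}u_\sigma'(r)=\sigma^{-4}\int_0^r s^{N-1}Q\,u_\sigma\,ds$, using $u_\sigma>0$. Theorem~\ref{thm:existence_rigorous} carries the hypothesis $L_0\in(0,\infty)$, which your argument does not need.
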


This indicates that as noise vanishes, the normalized drift $\sigma ^{2}\phi
_{\sigma }(r)$ converges pointwise to the deterministic optimal rate $\sqrt{%
Q(r)}/r$, focusing the particle strictly towards the minimum of the cost
function.

\subsubsection{High Noise Saturation}

In the limit $\sigma \rightarrow \infty $, the system becomes
diffusion-dominated. The potential field $Q(r)$ is filtered out by the
strong stochastic fluctuations.

\begin{proposition}[Uniform Vanishing]
The regular solution $\phi _{\sigma }(r)$ satisfies the uniform decay: 
\begin{equation}
\sup_{r\in \lbrack 0,R]}\phi _{\sigma }(r)\leq \frac{R^{2}\Vert Q\Vert
_{\infty }}{N\sigma ^{4}}\rightarrow 0\quad \text{as }\sigma \rightarrow
\infty .
\end{equation}
\end{proposition}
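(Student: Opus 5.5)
The plan is to work entirely through the Schr\"odinger representative of the triality, writing $\phi_\sigma=u_\sigma'/(r\,u_\sigma)$ with $u_\sigma$ the regular solution of \eqref{eq:lin_ode}, as in Corollary~\ref{lem:transformation} and Theorem~\ref{thm:existence_rigorous}. I will first record the facts from the proof of Theorem~\ref{thm:existence_rigorous}: $u_\sigma\ge 1$ and $u_\sigma$ is non-decreasing on $[0,R]$, since $(r^{N-1}u_\sigma')'=\sigma^{-4}r^{N-1}Qu_\sigma\ge 0$. The key tool is then the integral representation \eqref{eq:u_prime_integral},
\[
u_\sigma'(r)=\frac{1}{\sigma^{4}r^{N-1}}\int_0^r s^{N-1}Q(s)u_\sigma(s)\,ds ,
\]
which is the ``integral representation of $\phi$ established in Section~\ref{sec:dirichlet}'' that the paper refers to.

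\textbf{Step 1 (sharp pointwise bound).} Using $u_\sigma(s)\le u_\sigma(r)$ and $Q(s)\le\|Q\|_{L^\infty([0,R])}$ for $s\le r\le R$, I would bound the integral by $\|Q\|_\infty u_\sigma(r)\,r^{N}/N$. This gives $u_\sigma'/u_\sigma\le \|Q\|_\infty r/(N\sigma^4)$, and therefore
\[
0\le \phi_\sigma(r)\le \frac{\|Q\|_{L^\infty([0,R])}}{N\sigma^{4}}, \qquad r\in(0,R].
\]
At $r=0$ the bound holds trivially, since $\phi_\sigma(0)=0$. As a cross-check, Proposition~\ref{prop:asymptotics_gen} gives the same bound: $\phi_\sigma<g$, and $g\le Q/(N\sigma^4)$ follows from $\sqrt{N^2+t}-N\le t/(2N)$.

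\textbf{Step 2 (the factor $R^2$).} If $R\ge 1$, Step~1 already yields \eqref{eq:sigma_infty_decay}, because $1\le R^2$. This case split, $R\ge1$ versus $R<1$, is where I expect the main obstacle. The $R^2$ factor cannot come from $\|Q\|_\infty$ alone: a $Q$ that rises quickly to a plateau $M$ makes $\phi_\sigma(R)$ close to $M/(N\sigma^4)$ even when $R$ is small. So for $R<1$ I would instead exploit the quadratic vanishing at the origin. By (H1)--(H2) and continuity, the quantity $K_R:=\sup_{0<s\le R}Q(s)/s^2$ is finite. Inserting $Q(s)\le K_R s^2$ into the integral gives $\int_0^r s^{N+1}u_\sigma\,ds\le u_\sigma(r)r^{N+2}/(N+2)$, and hence
\[
\phi_\sigma(r)\le \frac{K_R\,r^{2}}{(N+2)\sigma^4}\le\frac{R^{2}K_R}{N\sigma^4}.
\]
This is the stated $R^2\sigma^{-4}$ profile, with $\|Q\|_\infty$ replaced by the quadratic modulus $K_R$. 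I would present this as the form in which the $R^2$ scaling is established for $R<1$, and I would state explicitly that it is not the statement's constant $\|Q\|_\infty$. Indeed, as the plateau example shows, for $R<1$ the bound with the constant $\|Q\|_{L^\infty([0,R])}$ can fail, so this step does not prove the statement as worded.

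\textbf{Step 3 (conclusion).} In both cases the right-hand side is a fixed constant times $\sigma^{-4}$, independent of $r\in[0,R]$. Hence $\sup_{[0,R]}\phi_\sigma\to0$ as $\sigma\to\infty$, uniformly on $[0,R]$. This delivers the diffusion-dominated limit, with the stated constant when $R\ge1$ and with the constant $K_R$ from Step~2 when $R<1$.
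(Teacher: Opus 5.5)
Your Step~1 is the paper's own argument. The paper uses the same representation $\phi_\sigma(r)=\frac{1}{\sigma^{4}r^{N}u_\sigma(r)}\int_0^r s^{N-1}Q(s)u_\sigma(s)\,ds$ and the same two estimates, $u_\sigma(s)\le u_\sigma(r)$ and $Q(s)\le\|Q\|_\infty$. The difference is that you evaluate the result correctly. The paper writes $\frac{\|Q\|_\infty}{\sigma^4 r^N}\int_0^r s^{N-1}\,ds=\frac{\|Q\|_\infty\, r}{N\sigma^4}$, but the quotient is $1/N$, not $r/N$. It then says that evaluating at $r=R$ produces the factor $R^{2}$, which does not follow even from that line. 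What the argument actually yields is your $r$-independent bound $\sup_{[0,R]}\phi_\sigma\le\|Q\|_{L^\infty([0,R])}/(N\sigma^4)$. This gives the displayed inequality only when $R\ge 1$. It does give the uniform decay as $\sigma\to\infty$ for every $R$, and that decay is the real content of the proposition.

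Your claim that the stated constant can fail for $R<1$ is correct. The paper's own benchmark $Q(r)=\lambda r^{2}$, which satisfies all hypotheses of Theorem~\ref{thm:sigma}, already shows it:
\begin{itemize}
\item Using $u_\sigma\ge 1$ in the representation gives $\sigma^4\phi_\sigma(R)\ge \lambda R^{2}/\bigl((N+2)\,u_\sigma(R)\bigr)$.
\item Your Step~1 gives $\ln u_\sigma(R)\le \lambda R^{4}/(2N\sigma^{4})\to 0$.
\item Hence $\liminf_{\sigma\to\infty}\sigma^4\phi_\sigma(R)\ge \lambda R^{2}/(N+2)$, while the claimed bound is $\sigma^{-4}\lambda R^{4}/N$.
\end{itemize}
So the stated bound is violated for all large $\sigma$ whenever $R^{2}<N/(N+2)$.

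Your Step~2 estimate $\phi_\sigma(r)\le K_R\, r^{2}/\bigl((N+2)\sigma^{4}\bigr)$ is correct, and for this $Q$ it is asymptotically sharp as $\sigma\to\infty$. It is a valid substitute for the case $R<1$. As you say, it is not a proof of the statement as worded, and that statement cannot be proved.

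One minor caution: your cross-check via Proposition~\ref{prop:asymptotics_gen} needs $g$ to be monotone increasing, which is not assumed here. Keep it as a side remark rather than part of the proof.
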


\begin{proof}
Integrating the Riccati equation directly or using the integral form of $%
u^{\prime }$ in Section (\ref{sec:dirichlet}): 
\begin{equation*}
\phi (r)=\frac{1}{\sigma ^{4}r^{N}u(r)}\int_{0}^{r}s^{N-1}Q(s)u(s)ds.
\end{equation*}%
Since $u(s)\leq u(r)$ and $Q(s)\leq \Vert Q\Vert _{\infty }$: 
\begin{equation*}
\phi (r)\leq \frac{\Vert Q\Vert _{\infty }}{\sigma ^{4}r^{N}}%
\int_{0}^{r}s^{N-1}ds=\frac{\Vert Q\Vert _{\infty }r}{\sigma ^{4}N}.
\end{equation*}%
Evaluation at $r=R$ gives the result.
\end{proof}

\subsection{Connection to the Schr\"{o}dinger Equation \label%
{sec:schrodinger}}

The linear equation \eqref{eq:lin_ode} can be interpreted within the
framework of quantum mechanics \cite{berezin2012,cooper1995,Kenichi2023}.
Recalling the expression for the radial part of the $N$-dimensional
Laplacian operator $\Delta $ acting on a radially symmetric function $%
u(x)=u(|x|)$: 
\begin{equation*}
\Delta u=u^{\prime \prime }(r)+\frac{N-1}{r}u^{\prime }(r).
\end{equation*}%
Thus, the linear equation \eqref{eq:lin_ode} is equivalent to the stationary
Schr\"{o}dinger equation (with zero energy $E=0$): 
\begin{equation}
-\Delta u+\frac{Q(|x|)}{\sigma ^{4}}u=0,\qquad x\in \mathbb{R}^{N},
\label{eq:schrodinger}
\end{equation}%
where $V(x)=Q(|x|)/\sigma ^{4}$ acts as the potential.

In this context, the Riccati transformation $\phi (r)=\frac{u^{\prime }(r)}{%
ru(r)}$ relates the wave function $u$ to its logarithmic derivative. In
supersymmetric quantum mechanics (SUSY QM) \cite{cooper1995}, the Riccati
equation often defines the superpotential $W(r)$, which allows for the
factorization of the Hamiltonian. Specifically, if we set $w(r)=u^{\prime
}(r)/u(r)$, the equation 
\begin{equation*}
w^{\prime }+w^{2}+\frac{N-1}{r}w=V(r)
\end{equation*}%
is the standard form used to study ground state properties and potential
shapes. The regularity condition $\phi (0)=0$ (or $u^{\prime }(0)=0$)
corresponds to the requirement that the wave function be regular at the
origin, a standard physical boundary condition for central potentials.

\section{Extension to the Time-Dependent Schr\"{o}dinger Equation}

\label{sec:tdse}

While the preceding analysis focused on the stationary (time-independent)
regime, the full physical picture reveals itself through the dynamics of the
wave function $\Psi(x, t)$. This section explores the transition from the
static optimal drift to the evolving probability density in a central
potential.

\subsection{Governing Equation and Separation of Variables}

The time-dependent Schr\"{o}dinger equation (TDSE) for a particle in the
presence of the potential $V(x)=Q(|x|)/\sigma ^{4}$ is formally given (using
the diffusion parameter $\sigma ^{2}$ in place of $\hbar $) by: 
\begin{equation}
i\sigma ^{2}\frac{\partial \Psi (x,t)}{\partial t}=-\frac{\sigma ^{4}}{2}%
\Delta \Psi (x,t)+Q(|x|)\Psi (x,t).  \label{eq:tdse_original}
\end{equation}%
Assuming a separable solution of the form 
\begin{equation*}
\Psi (x,t)=u(x)e^{-iEt/\sigma ^{2}},
\end{equation*}%
where $E$ denotes the energy level of the system, we substitute into %
\eqref{eq:tdse_original}: 
\begin{equation}
i\sigma ^{2}\left( -\frac{iE}{\sigma ^{2}}ue^{-iEt/\sigma ^{2}}\right)
=\left( -\frac{\sigma ^{4}}{2}\Delta u+bu\right) e^{-iEt/\sigma ^{2}}.
\end{equation}%
Dividing by the exponential factor, we recover the stationary Schr\"{o}%
dinger equation (SSE): 
\begin{equation}
-\frac{\sigma ^{4}}{2}\Delta u(x)+Q(|x|)u(x)=Eu(x)\implies \Delta u+\frac{%
2(E-Q(r))}{\sigma ^{4}}u=0.
\end{equation}%
The triality framework established in Section \ref{sec:main_results}
corresponds to the specific case of the zero-energy ground state ($E=0$) for
a system where the "cost" function $b$ acts as an effective potential. The
solutions $u(r)$ analyzed in this paper characterize the spatial envelope of
the persistent state around which temporal fluctuations occur.

\subsection{Discussion of Differential Dynamics vs Permanent States}

The relationship between the time-dependent wave function $\Psi$ and the
stationary auxiliary state $u$ provides deep insights into the stability of
the Riccati drift $\phi$:

\begin{enumerate}
\item \textbf{Transition to Diffusion (Wick Rotation)}: By performing a Wick
rotation from real time $t$ to imaginary time $\tau =it$, the TDSE %
\eqref{eq:tdse_original} transforms into a parabolic diffusion equation: 
\begin{equation}
\frac{\partial \Pi (x,\tau )}{\partial \tau }=\frac{\sigma ^{2}}{2}\Delta
\Pi (x,\tau )-\frac{Q(x)}{\sigma ^{2}}\Pi (x,\tau ).  \label{eq:wick_heat}
\end{equation}%
This is mathematically equivalent to the backward Kolmogorov equation or a
heat equation with absorption. In this context, the stationary solution $u$
represents the \textit{asymptotic survival probability} or the steady-state
density of the process, serving as the physical manifestation of the parabolic
linearization framework established earlier in Remark \ref{rem:parabolic-linearization}.

\item \textbf{Probability Current and Drift}: The probability current%
\begin{equation*}
j=\text{Im}(\Psi ^{\ast }\nabla \Psi )
\end{equation*}
in the time-dependent case represents the dynamic flow of density. In the
stationary ground state analysis, $j\equiv 0$. However, the Riccati solution 
$\phi =u^{\prime }/(ru)$ defines a "gradient flow" $p=\nabla \ln u$. This
reveals that the the stationary Schr\"{o}dinger state hides a permanent
"virtual flow" that exactly balances the cost of fluctuations.

\item \textbf{Energy vs Cost}: While the TDSE allows for eigenvalues $E\neq
0 $ (excited states with oscillating spatial nodes), the stochastic control
problem uniquely selects the zero-energy regular state ($u(r)>0$). This is
because the optimal value function $z=-2\sigma ^{2}\ln u$ must be real and
well-defined everywhere, which is only possible for the non-vanishing ground
state of the Schr\"{o}dinger system.
\end{enumerate}

Thus, the stationary Schr\"{o}dinger equation provides the "skeleton" of the
optimal dynamics, while the time-dependent equation describes the relaxation
processes and the approach to this optimal configuration.

\subsection{Asymptotic Consistency}

The stationary asymptotics established in Section \ref{sec:asymptotics}
imply that the time-dependent wave packets will eventually stabilize such
that the ratio $\ln (|\Psi (r,t)|)/r^{2}$ approaches the corresponding
steady-state growth rate determined by the limit 
\begin{equation}
L:=\lim_{r\rightarrow \infty }\frac{Q(r)}{r^{2}}\in (0,\infty ).
\label{eq:infinity_growth_rigorous}
\end{equation}

\begin{theorem}[Global Existence and Uniqueness]
\label{thm:global} If $Q\in C([0,\infty ))$ is such that $Q(r)\geq 0$ for
all $r$ and the local behavior near the origin satisfies 
\begin{equation}
\limsup_{r\rightarrow 0^{+}}\frac{Q(r)}{r^{2}}<\infty ,
\label{eq:local_growth_bounded}
\end{equation}%
there exists a unique solution $\phi \in C^{1}([0,\infty ))$ to %
\eqref{eq:ric_ode} such that $\phi (0)=0$.
\end{theorem}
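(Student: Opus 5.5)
The plan is to go through the Schr\"{o}dinger side of the triality (Corollary~\ref{lem:transformation}). I would work directly with the self-adjoint form $(r^{N-1}u')'=\sigma^{-4}r^{N-1}Q(r)u$ rather than quoting Theorem~\ref{thm:existence-uniqueness-Lgeneral}. That theorem assumes $a$ continuous at $0$, which fails for $a(r)=(N-1)/r$.

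\textbf{Step 1: existence of $u$ on all of $[0,\infty)$.} Integrating twice with $u(0)=1$, $u'(0)=0$ gives the Volterra equation
\[
u(r)=1+\int_{0}^{r}K(r,s)\,u(s)\,ds,\qquad K(r,s)=\frac{Q(s)}{\sigma^{4}}\,s^{N-1}\int_{s}^{r}t^{1-N}\,dt .
\]
The key elementary bound is $s^{N-1}\int_{s}^{r}t^{1-N}\,dt\le r-s$ for $0<s\le r$, which holds for every $N\ge 1$ because $t^{1-N}\le s^{1-N}$ on $[s,r]$. Hence
\[
0\le K(r,s)\le \sigma^{-4}\|Q\|_{L^{\infty}([0,R])}\,(r-s)\quad\text{on }0\le s\le r\le R ,
\]
so $K$ is continuous and bounded on each triangle $\Delta_R$, and the singularity at the origin disappears at the integral level. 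I would then run the successive approximations of Step~4 of the proof of Theorem~\ref{thm:existence-uniqueness-Lgeneral}. Using the factorial estimate $D_n\le (C_R R^2/2)^n/n!$ instead of a smallness condition on $R$, this gives a continuous solution on every $[0,R]$ with no blow-up, because the equation is linear. Differentiating the integral equation recovers $u\in C^2((0,\infty))\cap C^1([0,\infty))$ solving \eqref{eq:lin_ode}. Since $K\ge 0$ and the iterates start at $1$, all iterates are $\ge 1$, so $u\ge 1$; in particular $u$ never vanishes.

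\textbf{Step 2: definition and regularity of $\phi$.} I would set $\phi=u'/(ru)$, which by Corollary~\ref{lem:transformation} solves \eqref{eq:ric_ode} on $(0,\infty)$. The integral representation gives
\[
\phi(r)=\frac{1}{\sigma^{4}r^{N}u(r)}\int_{0}^{r}s^{N-1}Q(s)u(s)\,ds .
\]
Hypothesis \eqref{eq:local_growth_bounded} yields $Q(s)\le Cs^{2}$ near $0$, so $0\le\phi(r)\le C'r^{2}$ and $\phi(r)\to 0=\phi(0)$. For the derivative at the origin I would read off the Riccati right-hand side: $-r\phi^{2}=O(r^{5})$, $N\phi/r=O(r)$ and $Q/(\sigma^{4}r)=O(r)$. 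Hence $\phi'(r)\to 0$ as $r\downarrow 0$, and the mean value theorem gives $\phi'(0)=0$ with $\phi'$ continuous at $0$. Thus $\phi\in C^{1}([0,\infty))$. Only the $\limsup$ bound is used, not the existence of $L_0$.

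\textbf{Step 3: uniqueness.} Let $\tilde\phi\in C^{1}([0,\infty))$ be another solution with $\tilde\phi(0)=0$. I would define $\tilde u(r)=\exp\bigl(\int_{0}^{r}s\tilde\phi(s)\,ds\bigr)$. Then $\tilde u>0$, $\tilde u(0)=1$, $\tilde u'(r)=r\tilde\phi\tilde u$, so $\tilde u'(0)=0$, and $\tilde u$ solves \eqref{eq:lin_ode} on $(0,\infty)$. Moreover $r^{N-1}\tilde u'(r)=r^{N}\tilde\phi\tilde u\to 0$, so integrating the self-adjoint form from $0$ shows that $\tilde u$ satisfies the same Volterra equation. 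Then $w=u-\tilde u$ satisfies $|w(r)|\le C_R\int_0^r(r-s)|w(s)|\,ds$, and Gronwall (or the same iteration) gives $w\equiv 0$. Hence $\tilde\phi=\tilde u'/(r\tilde u)=\phi$.

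The main obstacle is precisely the regular singular point at $r=0$. The general Theorem~\ref{thm:existence-uniqueness-Lgeneral} does not literally apply there, and the Frobenius expansion used in Theorem~\ref{thm:existence_rigorous} needs more than continuity of $Q$. The kernel estimate $s^{N-1}\int_s^r t^{1-N}\,dt\le r-s$ is the point on which I would concentrate. The other delicate point is checking, in the uniqueness step, that an arbitrary $C^1$ regular Riccati solution produces a $\tilde u$ satisfying the \emph{integrated} equation, which requires the vanishing of the boundary term $r^{N-1}\tilde u'(r)$ at $0$.
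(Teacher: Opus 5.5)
Your argument is correct, and it takes a genuinely different route from the paper.

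\textbf{The paper's route.} The proof is a short continuation argument. It invokes Theorem~\ref{thm:existence_rigorous} to get a unique regular solution on each $[0,R)$. It then patches these together by uniqueness near the origin. Finally it uses $Q\ge 0$ ($u$ increasing and strictly positive) to rule out blow-up of $\phi=u'/(ru)$.

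\textbf{Your route.} You build $u$ on all of $[0,\infty)$ at once from the Volterra equation for the self-adjoint form. The kernel estimate $s^{N-1}\int_s^r t^{1-N}\,dt\le r-s$ absorbs the regular singular point. The factorial bound replaces both the smallness of $R$ and the continuation step. Positivity $u\ge 1$ comes for free from $K\ge 0$. The $\limsup$ hypothesis enters only through the explicit bound $0\le\phi\le C'r^{2}$. Uniqueness is proved for the Riccati problem itself, by pulling an arbitrary regular $\tilde\phi$ back to a solution of the same Volterra equation.

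\textbf{What each buys.} The paper's route is brief, and when $L_0=\lim_{r\to 0}Q(r)/r^2$ exists it gives the explicit leading term $\phi(r)\sim L_0r^2/(\sigma^4(N+2))$. As written, however, it has three weak points.
\begin{itemize}
\item It relies on Theorem~\ref{thm:existence_rigorous}, whose hypothesis $L_0\in(0,\infty)$ is strictly stronger than the $\limsup$ condition assumed here. For example, $Q\equiv 0$ and oscillating $Q(r)/r^2$ are not covered.
\item That theorem rests on a Frobenius power-series argument, which needs analytic rather than merely continuous coefficients.
\item It also leans on Theorem~\ref{thm:existence-uniqueness-Lgeneral}. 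That theorem's kernel bounds use $\sup|\int_0^x a|$ and so break down for $a=(N-1)/r$ when $N\ge 2$, exactly as you note.
\end{itemize}
The paper also only asserts, rather than proves, that every $C^1$ Riccati solution with $\phi(0)=0$ comes from the regular branch $u$.

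Your version covers the statement under exactly its stated hypotheses and closes these points. The details you leave implicit are routine: continuity of $K$ at $s=0$ (e.g.\ $s\ln(r/s)\to 0$ for $N=2$), and $u(s)\le u(r)$ in the $O(r^2)$ bound, which follows from $u'\ge 0$.
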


\begin{proof}
Following Theorem \ref{thm:existence_rigorous}, for any $R>0$, there exists
a unique solution on $[0,R)$. By the uniqueness of the regular solution $u$
(and thus $\phi $) near 0, solutions on overlapping intervals $(0,R_{1})$
and $(0,R_{2})$ must coincide. The non-negativity of $Q$ ensures that $u$ is
increasing and strictly positive for all $r\in \lbrack 0,\infty )$,
preventing any finite-time blow-up or singularity for $\phi =\frac{u^{\prime
}}{ru}$. Thus, the solution extends uniquely to $[0,\infty )$.
\end{proof}

\subsection{The transformed state $z(r)$ and its governing equation \label%
{sec:z_state}}

In the context of stochastic optimal control, it is often useful to work
with the logarithmic transformation of the auxiliary function $u$. Following
the standard Cole--Hopf style mapping, we define the state $z(r)$ as: 
\begin{equation}
z(r)=-2\sigma ^{2}\ln u(r).  \label{eq:z_def_main}
\end{equation}

\subsection{Derivation of the Differential Equation for $z$}

To find the equation satisfied by $z$, we express $u$ in terms of $z$: 
\begin{equation*}
u(r)=\exp \left( -\frac{z(r)}{2\sigma ^{2}}\right) .
\end{equation*}%
Calculating the first and second derivatives of $u$: 
\begin{equation*}
u^{\prime }(r)=-\frac{z^{\prime }(r)}{2\sigma ^{2}}u(r),\quad u^{\prime
\prime }(r)=\left[ -\frac{z^{\prime \prime }(r)}{2\sigma ^{2}}+\frac{%
(z^{\prime })^{2}}{4\sigma ^{4}}\right] u(r).
\end{equation*}%
Substituting these expressions into the linear ODE \eqref{eq:lin_ode}: 
\begin{equation*}
\left( -\frac{z^{\prime \prime }}{2\sigma ^{2}}+\frac{(z^{\prime })^{2}}{%
4\sigma ^{4}}\right) u-\frac{N-1}{r}\frac{z^{\prime }}{2\sigma ^{2}}u-\frac{%
Q(r)}{\sigma ^{4}}u=0.
\end{equation*}%
Dividing by $u/(2\sigma ^{2})$ and rearranging terms, we obtain the
nonlinear second-order equation for $z$: 
\begin{equation}
z^{\prime \prime }(r)+\frac{N-1}{r}z^{\prime }(r)-\frac{1}{2\sigma ^{2}}%
(z^{\prime }\left( r\right) )^{2}+\frac{2Q(r)}{\sigma ^{2}}=0.
\label{eq:z_ode}
\end{equation}

\subsubsection{Analysis and Interpretation}

The equation for $z$ is the radial form of the Hamilton--Jacobi--Bellman
(HJB) equation for the value function of a corresponding stochastic control
problem.

\textbf{Initial Conditions:} Since $u(0)=1$, the state $z$ satisfies $z(0)=0$%
. Furthermore, $u^{\prime }(0)=0$ implies $z^{\prime }(0)=0$.

\textbf{Bounded Domain $[0,R]$:} If $u(R)=U_{R}$ is prescribed, then $z$
must satisfy the Dirichlet condition 
\begin{equation*}
z(R)=-2\sigma ^{2}\ln U_{R}.
\end{equation*}%
This represents the "terminal cost" or "exit value" at the boundary.

\textbf{Unbounded Domain $[0,\infty )$:} If the cost function $b$ satisfies
the limit condition 
\begin{equation}
L:=\lim_{r\rightarrow \infty }\frac{Q(r)}{r^{2}}\in (0,\infty ),
\label{eq:infinity_growth_rigorous_2}
\end{equation}%
the value function $z(r)$ exhibits a quadratic long-term decrease such that 
\begin{equation*}
\lim_{r\rightarrow \infty }\frac{z(r)}{r^{2}}=-\sqrt{L}.
\end{equation*}

Physical interpretation: the function $z(r)$ represents the optimal
cost-to-go for a particle starting at distance $r$ from the origin, where
the term $(z^{\prime })^{2}$ represents the quadratic control effort and $%
Q(r)$ is the running cost.

\subsection{Qualitative Analysis for the State $z$}

The following theorem describes the concavity of the value function $z(r)$,
which is a critical property in optimal control theory.

\begin{theorem}[Concavity of $z$]
\label{thm:concavity} Suppose that the cost function $b$ satisfies the
conditions of Proposition \ref{prop:asymptotics_gen} so that $\phi $ is
strictly increasing, and Theorem \ref{thm:convex_u} so that $u$ is strictly
convex. Then, the transformed value function $z(r)=-2\sigma ^{2}\ln u(r)$ is
strictly concave on $(0,\infty )$.
\end{theorem}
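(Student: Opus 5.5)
The plan is to reduce concavity of $z$ to \emph{log-convexity} of $u$, and then to read log-convexity directly off the Riccati variable $\phi$. Convexity of $u$ alone (Theorem~\ref{thm:convex_u}) is not enough, because $z''<0$ requires the stronger inequality $u''/u>(u'/u)^{2}$. So Theorem~\ref{thm:convex_u} will only be used as a consistency check, and the real work will go through $\phi$.

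\textbf{Step 1: express $z''$ through $\phi$.} Differentiating $z=-2\sigma^{2}\ln u$ twice gives
\[
z'(r)=-2\sigma^{2}\,\frac{u'(r)}{u(r)},\qquad
z''(r)=-2\sigma^{2}\Bigl(\ln u\Bigr)''(r)=-2\sigma^{2}\Bigl(\frac{u''}{u}-\Bigl(\frac{u'}{u}\Bigr)^{2}\Bigr).
\]
By the transformation \eqref{eq:transform} we have $u'/u=r\phi$, and hence
\[
z''(r)=-2\sigma^{2}\,\bigl(r\phi(r)\bigr)'=-2\sigma^{2}\bigl(\phi(r)+r\phi'(r)\bigr),\qquad r>0.
\]
This identity is legitimate on $(0,\infty)$ for two reasons. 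First, $u\geq 1$ is $C^{2}$ there, by Theorems~\ref{thm:existence_rigorous} and~\ref{thm:global}. Second, $\phi\in C^{1}([0,\infty))$.

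\textbf{Step 2: sign of $\phi+r\phi'$.} By hypothesis the cost satisfies the assumptions of Proposition~\ref{prop:asymptotics_gen}. In the radial form these are verified in Corollary~\ref{cor:radial_asymptotics}, with $q_{0}=Q/(\sigma^{4}r)$, $q_{1}=-N/r$ and $q_{2}=-r$. That result gives two facts: $0<\phi(r)<g(r)$ for all $r>0$, and $\phi$ is strictly increasing. Both can be taken pointwise from part d of the proof of Proposition~\ref{prop:asymptotics_gen}, where $\phi'(r)=(\phi-g)\bigl(q_{1}+q_{2}(\phi+g)\bigr)>0$ is shown for every $r>0$. It follows that $\phi(r)+r\phi'(r)>0$ for every $r>0$, and therefore $z''(r)<0$ on $(0,\infty)$.

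\textbf{Step 3: consistency with convexity of $u$.} As a cross-check, I will note that $u''/u=(r\phi)'+(r\phi)^{2}>0$. This recovers strict convexity of $u$ in agreement with Theorem~\ref{thm:convex_u}. It also shows that the curvature chain of Remark~\ref{rem:curvature-transfer} runs in the direction ``$\phi>0$ and $\phi$ increasing $\Rightarrow$ $z$ concave $\Rightarrow$ $u$ convex''.

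\textbf{Main obstacle.} The delicate point is that the sign of $z''$ cannot be extracted from the equations alone. Substituting the Riccati equation gives
\[
\phi+r\phi'=\frac{Q}{\sigma^{4}}-r^{2}\phi^{2}-(N-1)\phi,
\]
which has no evident sign. The argument therefore depends on the strict monotonicity $\phi'>0$ supplied by the barrier mechanism.

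Accordingly, I will check that condition \eqref{cond-extra-general} and the monotonicity of $g$ are in force on the whole of $(0,\infty)$, not merely on $[r_{0},\infty)$. For the strict inequality $\phi'(r)>0$ at a given $r$, it suffices to use the pointwise bound $q_{1}+q_{2}g\leq -N/(2r)<0$ from Corollary~\ref{cor:radial_asymptotics}, together with $\phi<g$. A uniform $\delta$ is not needed for this. At $r=0$ we have $z''(0)=0$, because $\phi(0)=\phi'(0)=0$, which is why concavity is claimed only on the open interval $(0,\infty)$.
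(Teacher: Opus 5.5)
Your proposal is correct and follows the paper's proof. Both write $z''=-2\sigma^{2}\bigl(\phi+r\phi'\bigr)$ via $u'/u=r\phi$ and conclude from $\phi>0$ and $\phi'>0$, supplied by the barrier mechanism of Proposition~\ref{prop:asymptotics_gen}, with convexity of $u$ playing no essential role. Your pointwise derivation of $\phi'(r)>0$ is tighter than the paper's inference from strict monotonicity, but justify $q_{1}+q_{2}(\phi+g)\le q_{1}+q_{2}g<0$ from $\phi>0$ and $q_{2}<0$ (as in part c) rather than citing part d, whose bound $q_{1}+q_{2}(y+g)\le q_{1}+2q_{2}g$ runs the wrong way.
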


\begin{proof}
Differentiating the defining equation $z(r)=-2\sigma ^{2}\ln u(r)$, we find
the first derivative in terms of the Riccati solution: 
\begin{equation}
z^{\prime }\left( r\right) =-2\sigma ^{2}\frac{u^{\prime }(r)}{u(r)}%
=-2\sigma ^{2}r\phi (r).  \label{eq:z_prime}
\end{equation}%
The second derivative is obtained by differentiating \eqref{eq:z_prime} with
respect to $r$: 
\begin{equation}
z^{\prime \prime }\left( r\right) =-2\sigma ^{2}\frac{d}{dr}(r\phi
(r))=-2\sigma ^{2}(\phi (r)+r\phi ^{\prime }(r)).  \label{eq:z_double_prime}
\end{equation}%
By the existence theory in bounded and unbounded domains, the regular
solution satisfies $\phi (r)>0$ for $r>0$. Furthermore, the assumption that $%
\phi $ is strictly increasing on $(0,\infty )$ implies that $\phi ^{\prime
}(r)>0$. Therefore, the term in the parentheses 
\begin{equation*}
\phi (r)+r\phi ^{\prime }(r)
\end{equation*}
is strictly positive for all $r\in (0,\infty )$. Consequently, $z^{\prime
\prime }(r)<0$, which establishes that $z$ is strictly concave.

The concavity of $z$ is also related to the property 
\begin{equation*}
u^{\prime \prime }(r)u(r)\geq (u^{\prime }\left( r\right) )^{2}.
\end{equation*}%
Since we have already established $u^{\prime \prime }(r)>0$ (convexity) and $%
u(r)\geq 1$, the concavity of $z$ implies that the auxiliary function $u$
exhibits a balanced growth where the squared first derivative is controlled
by the product of the function and its acceleration.
\end{proof}

\subsection{Stochastic Verification Theorem -- Stationary Radial Case}
\label{sec:radial_stationary_verification}

The triality between the radial Riccati, Schr\"{o}dinger and HJB
equations admits a precise stochastic counterpart, in which the value
function $\bar{z}(x)=z(|x|)=-2\sigma^{2}\ln u(|x|)$ is identified with
the optimal cost-to-go of an infinite-horizon stochastic control problem
on $\mathbb{R}^{N}$. This is the content of the next theorem, which
constitutes one of the principal results announced in
Section~\ref{sec:intro-results}. It can be regarded as the
$\mathbb{R}^{N}$-version of the abstract verification theorem of
Section~\ref{sec:main_results}, expressed directly in the radial
formulation that makes the Cole--Hopf transformation
$z=-2\sigma^{2}\ln u$ exact and produces the canonical optimal feedback
law $\alpha^{\ast}(x)=2\sigma^{2}\phi(|x|)\,x$.

\begin{theorem}[Stochastic Verification Theorem -- Stationary Radial
Case]\label{thm:radial_verification}
Let $Q\in C([0,\infty))$ satisfy the standing assumptions
$\mathrm{(H1)}$--$\mathrm{(H3)}$ with $L\in(0,\infty)$, let $\sigma>0$,
$N\in \mathbb{N}^{\ast }$, and let $\phi$, $u$, $z$ be the regular
solutions of the radial Riccati, Schr\"{o}dinger and HJB equations
provided by the Triality Theorem~\ref{thm:triality}. Consider the
controlled diffusion in $\mathbb{R}^{N}$
\begin{equation}
dX_{t}=\alpha_{t}\,dt+\sqrt{2}\,\sigma\,dW_{t},\qquad
X_{0}=x\in\mathbb{R}^{N},
\label{eq:radial_SDE_xN}
\end{equation}
with $W$ a standard $N$-dimensional Brownian motion and $\alpha$
$\{\mathcal{F}_{t}\}$-progressively measurable. Define the admissible class
$\mathcal{A}_{N}(x)$ of controls
$\alpha:\Omega\times[0,\infty)\to\mathbb{R}^{N}$ such that
\eqref{eq:radial_SDE_xN} admits a unique strong solution and
$\mathbb{E}_{x}\int_{0}^{\infty}|\alpha_{s}|^{2}\,ds<\infty$. Define the
infinite-horizon cost
\begin{equation}
J(x;\alpha)\;=\;\mathbb{E}_{x}\!\left[\int_{0}^{\infty}
\Bigl(\tfrac{1}{2}|\alpha_{s}|^{2}+2\,Q(|X_{s}|)\Bigr)\,ds\right],
\label{eq:radial_cost_xN}
\end{equation}
which corresponds to the natural normalization of the running cost
identified in \eqref{eq:triality_intro_maps}.
Let $\bar{z}(x):=z(|x|)$ and assume the transversality condition
$\lim_{t\to\infty}\mathbb{E}_{x}[\bar{z}(X_{t})]=0$ for every admissible
control. Then $\bar{z}$ is the value function of the control problem,
\begin{equation}
\bar{z}(x)\;=\;\inf_{\alpha\in\mathcal{A}_{N}(x)}J(x;\alpha)
\;=\;-2\sigma^{2}\ln u(|x|),
\label{eq:radial_value_xN}
\end{equation}
and the optimal feedback law is
\begin{equation}
\alpha^{\ast}(x)\;=\;-\nabla\bar{z}(x)\;=\;
2\sigma^{2}\,\phi(|x|)\,x,\qquad x\in\mathbb{R}^{N}\setminus\{0\}.
\label{eq:radial_optimal_feedback_xN}
\end{equation}
\end{theorem}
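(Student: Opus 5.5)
The plan is a classical verification argument in $\mathbb{R}^{N}$, applied to the radial lift $\bar z(x)=z(|x|)$, in three steps.

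\textbf{Step 1: regularity of the lift.} First I would check that $\bar z\in C^{2}(\mathbb{R}^{N})$, including at the origin. By the Triality Theorem~\ref{thm:triality} and Theorem~\ref{thm:global}, $\phi\in C^{1}([0,\infty))$ with $\phi(0)=0$, and $z'(r)=-2\sigma^{2}r\phi(r)$ as in \eqref{eq:z_prime}. Hence
\[
\nabla\bar z(x)=z'(|x|)\,\frac{x}{|x|}=-2\sigma^{2}\phi(|x|)\,x ,
\]
which extends to a $C^{1}$ vector field on all of $\mathbb{R}^{N}$ because $\phi\in C^{1}$. This gives $\bar z\in C^{2}(\mathbb{R}^{N})$ and the formula \eqref{eq:radial_optimal_feedback_xN} for $-\nabla\bar z$. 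Using the radial Laplacian \eqref{eq:radial_laplacian}, equation (H) becomes the PDE
\[
\sigma^{2}\Delta\bar z-\tfrac12|\nabla\bar z|^{2}+2Q(|x|)=0 \quad\text{on } \mathbb{R}^{N}\setminus\{0\},
\]
and it holds at $x=0$ by continuity.

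\textbf{Step 2: the HJB inequality and the lower bound.} The generator of \eqref{eq:radial_SDE_xN} is $\sigma^{2}\Delta+\alpha\cdot\nabla$. Completing the square gives, for every $a\in\mathbb{R}^{N}$,
\[
\sigma^{2}\Delta\bar z+a\cdot\nabla\bar z+\tfrac12|a|^{2}+2Q
=\tfrac12\,|a+\nabla\bar z|^{2}\;\ge\;0 ,
\]
with equality exactly when $a=-\nabla\bar z(x)=\alpha^{\ast}(x)$. This is the HJB equation
\[
\inf_{a\in\mathbb{R}^{N}}\bigl\{\sigma^{2}\Delta\bar z+a\cdot\nabla\bar z+\tfrac12|a|^{2}+2Q\bigr\}=0 .
\]
For an arbitrary $\alpha\in\mathcal{A}_{N}(x)$, I would apply It\^{o}'s formula to $\bar z(X_{t})$ up to $t\wedge\tau_{n}$, where $\tau_{n}$ is the exit time from the ball $B_{n}$. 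On $B_{n}$ the gradient is bounded, so the stochastic integral $\sqrt2\,\sigma\int\nabla\bar z\cdot dW$ is a true martingale. This yields
\[
\mathbb{E}_{x}[\bar z(X_{t\wedge\tau_{n}})]+\mathbb{E}_{x}\!\int_{0}^{t\wedge\tau_{n}}\Bigl(\tfrac12|\alpha_{s}|^{2}+2Q(|X_{s}|)\Bigr)ds\;\ge\;\bar z(x).
\]
The running cost is nonnegative, so monotone convergence handles the integral as $n\to\infty$ and then $t\to\infty$. For the first term, continuity of $\bar z$ and non-explosion give $\bar z(X_{t\wedge\tau_{n}})\to\bar z(X_{t})$. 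Passing the expectation to the limit needs uniform integrability, which I would obtain from the quadratic bound $|\bar z(y)|\le \sigma^{2}\bigl(\sup\phi\bigr)|y|^{2}$, where $\sup\phi\le\sqrt L/\sigma^{2}$ by Corollary~\ref{cor:radial_asymptotics}, together with a second-moment estimate from $\mathbb{E}\int|\alpha|^{2}<\infty$. The transversality hypothesis then kills $\mathbb{E}_{x}[\bar z(X_{t})]$, and we get $J(x;\alpha)\ge\bar z(x)$.

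\textbf{Step 3: optimality of $\alpha^{\ast}$.} The feedback $\alpha^{\ast}(y)=2\sigma^{2}\phi(|y|)y$ is $C^{1}$, hence locally Lipschitz. Since $0\le\phi\le\sqrt L/\sigma^{2}$, it also has linear growth, so the closed-loop SDE has a unique non-exploding strong solution. Along this solution the HJB inequality of Step~2 is an equality, and the same localisation argument gives $J(x;\alpha^{\ast})=\bar z(x)$. This proves \eqref{eq:radial_value_xN} and \eqref{eq:radial_optimal_feedback_xN}.

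\textbf{Main obstacle.} The delicate point is Step~3, specifically showing that $\alpha^{\ast}\in\mathcal{A}_{N}(x)$, i.e.\ $\mathbb{E}_{x}\int_{0}^{\infty}|\alpha^{\ast}(X_{s})|^{2}ds<\infty$, and that transversality holds along the optimal path. The closed-loop drift points outward ($\phi>0$), so $|X_{t}|$ is expected to grow rather than stay bounded. In addition, $z=-2\sigma^{2}\ln u\le0$ while $J\ge0$, so the identity can only hold under the transversality convention. I would therefore rely on the transversality hypothesis exactly as stated (it is assumed for every admissible control), and would try to reduce admissibility to the standing hypotheses through the bound $|\alpha^{\ast}(y)|\le2\sqrt L\,|y|$. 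I expect that reduction may require an extra assumption, since the finiteness of the infinite-horizon energy is not automatic from (H1)--(H3).
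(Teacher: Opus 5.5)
Your Steps 1--2 follow the paper's route: completion of squares, It\^o's formula with localization, and transversality for the lower bound. In places they are more careful than the paper. Checking $\bar z\in C^{2}(\mathbb{R}^{N})$ through $\nabla\bar z(x)=-2\sigma^{2}\phi(|x|)x$ and localizing on balls avoids the paper's annuli $\{1/n<|x|<n\}$, which give $\tau_{n}=0$ when $x=0$ and need not exhaust $[0,\infty)$ when $N=1$. Two small remarks:
\begin{itemize}
\item The bound $\sup\phi\le\sqrt{L}/\sigma^{2}$ needs the monotonicity of $g$ assumed in Corollary~\ref{cor:radial_asymptotics}, not just (H1)--(H3). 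However, $\phi\le\sup_{(0,\infty)}g<\infty$ holds anyway, because $\phi'<0$ wherever $\phi>g$, and that is all your uniform-integrability argument uses.
\item The lower bound $\bar z(x)\le J(x;\alpha)$ is actually trivial, since $u\ge 1$ gives $\bar z\le 0\le J$.
\end{itemize}

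The genuine gap is Step~3, exactly where you place it, and it cannot be closed.
\begin{itemize}
\item Your sign observation is a contradiction, not a technicality. Since $L>0$, there is some $r_{1}$ with $u(r_{1})>1$. Then $\bar z(x)\le z(r_{1})<0\le J(x;\alpha)$ for all $|x|\ge r_{1}$ and all $\alpha$, so $\bar z(x)=\inf J$ is false there.
\item If $\alpha^{\ast}$ were admissible, the hypothesis would grant transversality along it. Your exact It\^o identity would then give $J(x;\alpha^{\ast})=\bar z(x)<0$ for such $x$. So leaning on transversality \emph{as stated} is precisely what produces the contradiction, and no extra assumption compatible with $L>0$ can make $\alpha^{\ast}$ admissible.
\item Consistently, $\alpha^{\ast}(x)=2\sigma^{2}\phi(|x|)x$ points outward and pushes the state toward large $Q$. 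Along its own trajectory the state escapes to infinity and $\int_{0}^{\infty}|\alpha^{\ast}(X^{\ast}_{s})|^{2}\,ds=\infty$.
\end{itemize}

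The paper does not supply the missing idea either. It asserts $\alpha^{\ast}\in\mathcal{A}_{N}(x)$ from local Lipschitz continuity and $\phi=\mathcal{O}(r^{2})$ at the origin. That yields strong existence but says nothing about $\mathbb{E}_{x}\int_{0}^{\infty}|\alpha^{\ast}_{s}|^{2}\,ds$.

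What is true is that the hypothesis of the theorem is never met. The control $\alpha\equiv 0$ is admissible. Since $z\le 0$ is nonincreasing,
$\mathbb{E}_{x}[\bar z(x+\sqrt{2}\,\sigma W_{t})]\le z(r_{1})\,\mathbb{P}(|x+\sqrt{2}\,\sigma W_{t}|\ge r_{1})\to z(r_{1})<0$,
so transversality fails for an admissible control. The statement is therefore only vacuously true, and a correct proof has to go through this inconsistency. A meaningful verification theorem would require reformulating the control problem.
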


\begin{proof}
The infinitesimal generator of the controlled diffusion
\eqref{eq:radial_SDE_xN} acts on $C^{2}$ functions $f:\mathbb{R}^{N}\to
\mathbb{R}$ as
$\mathcal{L}^{\alpha}f=\alpha\!\cdot\!\nabla f+\sigma^{2}\Delta f$.
The Hamilton--Jacobi--Bellman equation associated with
\eqref{eq:radial_cost_xN} is therefore
\begin{equation}
0\;=\;\inf_{\alpha\in\mathbb{R}^{N}}\!\Bigl\{
\sigma^{2}\Delta\bar{z}(x)+\alpha\!\cdot\!\nabla\bar{z}(x)
+\tfrac{1}{2}|\alpha|^{2}+2\,Q(|x|)\Bigr\},
\label{eq:radial_HJB_xN}
\end{equation}
the inner minimization being attained at
$\alpha^{\ast}(x)=-\nabla\bar{z}(x)$. Substituting $\alpha^{\ast}$ into
\eqref{eq:radial_HJB_xN} yields the reduced HJB equation
\[
\sigma^{2}\Delta\bar{z}(x)\;-\;\tfrac{1}{2}|\nabla\bar{z}(x)|^{2}\;+\;2\,Q(|x|)\;=\;0,
\]
which, after division by $\sigma^{2}$ and restriction to radial $\bar{z}$, is
exactly the radial HJB equation \eqref{eq:hjb_intro} satisfied by $z(r)$.

By the radial symmetry of $Q$ and the $\mathrm{O}(N)$-invariance of
\eqref{eq:radial_SDE_xN}, the value function inherits radial symmetry, so
it suffices to consider feedback controls of the form
$\alpha(x)=\beta(|x|)\,x/|x|$ with $\beta\in L^{2}_{\mathrm{loc}}$.
Applying It\^{o}'s formula to $\bar{z}(X_{t})$ on a localizing sequence of
stopping times $\tau_{n}\uparrow\infty$, with
$\tau_{n}=\inf\{t\ge 0\,:\,|X_{t}|\notin(1/n,n)\}\wedge n$, we obtain
\[
\bar{z}(X_{t\wedge\tau_{n}})\;=\;\bar{z}(x)
+\!\int_{0}^{t\wedge\tau_{n}}\!\!
\bigl[\sigma^{2}\Delta\bar{z}+\alpha_{s}\!\cdot\!\nabla\bar{z}\bigr](X_{s})\,ds
+\!\int_{0}^{t\wedge\tau_{n}}\!\!\sqrt{2}\,\sigma\nabla\bar{z}(X_{s})\!\cdot\!dW_{s}.
\]
The stochastic integral is a true martingale on $[0,\tau_{n}]$ (since
$\nabla\bar{z}$ is bounded on $\{1/n\le|x|\le n\}$), so taking
expectations and using
\eqref{eq:radial_HJB_xN} together with the elementary lower bound
$\alpha\!\cdot\!\nabla\bar{z}+\tfrac{1}{2}|\alpha|^{2}\ge
-\tfrac{1}{2}|\nabla\bar{z}|^{2}$ (with equality iff
$\alpha=-\nabla\bar{z}$), we deduce
\[
\bar{z}(x)\;\le\;\mathbb{E}_{x}\!\left[\bar{z}(X_{t\wedge\tau_{n}})
+\!\int_{0}^{t\wedge\tau_{n}}\!\!
\Bigl(\tfrac{1}{2}|\alpha_{s}|^{2}+2\,Q(|X_{s}|)\Bigr)ds\right].
\]
Letting first $n\to\infty$ via the dominated convergence theorem
(applicable because of \eqref{eq:radial_cost_xN} and the local
boundedness of $\bar{z}$) and then $t\to\infty$ via Fatou's lemma and the
transversality condition $\lim_{t\to\infty}\mathbb{E}_{x}[\bar{z}(X_{t})]=0$,
we obtain
\[
\bar{z}(x)\;\le\;\mathbb{E}_{x}\!\left[\int_{0}^{\infty}\!\!
\Bigl(\tfrac{1}{2}|\alpha_{s}|^{2}+2\,Q(|X_{s}|)\Bigr)ds\right]
\;=\;J(x;\alpha),
\]
for every $\alpha\in\mathcal{A}_{N}(x)$. Hence
$\bar{z}(x)\le\inf_{\alpha}J(x;\alpha)$.
For the reverse inequality, choose $\alpha=\alpha^{\ast}$ defined in
\eqref{eq:radial_optimal_feedback_xN}. The local Lipschitz continuity of
$\alpha^{\ast}$ on $\mathbb{R}^{N}\setminus\{0\}$, together with the
Frobenius expansion $\phi(r)=\mathcal{O}(r^{2})$ at the origin
(Theorem~\ref{thm:existence_rigorous}), guarantees that
\eqref{eq:radial_SDE_xN} admits a unique strong solution under
$\alpha^{\ast}$ and that $\alpha^{\ast}\in\mathcal{A}_{N}(x)$. Moreover,
the same It\^{o} expansion now produces an exact equality at every step,
and dominated convergence yields $\bar{z}(x)=J(x;\alpha^{\ast})$. Hence
$\bar{z}$ coincides with the value function and $\alpha^{\ast}$ is the
optimal feedback law.
\end{proof}

\subsection{Stochastic Verification for the Parabolic Radial Case}
\label{sec:parabolic_control}

In this subsection we provide the stochastic-control interpretation of the
parabolic equation arising in the time-dependent Schr\"{o}dinger extension
of the triality. To preserve the natural normalization adopted in the
stationary case (Section~\ref{sec:radial_stationary_verification}), the
problem is formulated directly in $\mathbb{R}^{N}$, with radial symmetry
inherited from $Q$. The key observation is that the (backward) Wick-rotated
parabolic Schr\"{o}dinger equation
\[
\partial_{t}\Psi+\sigma^{2}\Delta\Psi-\sigma^{-2}Q(|x|)\,\Psi=0
\]
is precisely the linear representation of the parabolic
Hamilton--Jacobi--Bellman equation of an optimal control problem for a
controlled Brownian motion in $\mathbb{R}^{N}$. This establishes the
parabolic counterpart of the stationary triality developed earlier.

\subsubsection{Controlled diffusion and admissible controls}

Fix $\sigma>0$, $T\in(0,\infty)$ and a horizon
$[0,T]$. Let $(\Omega,\mathcal{F},\{\mathcal{F}_{t}\}_{t\in[0,T]},\mathbb{P})$
be a filtered probability space satisfying the usual conditions, equipped
with a standard $N$-dimensional Brownian motion $W=(W_{t})_{t\in[0,T]}$.
We consider the controlled diffusion
$X=(X_{t})_{t\in[t_{0},T]}$ in $\mathbb{R}^{N}$ governed by
\begin{equation}
\label{eq:parabolic_SDE}
dX_{t}\;=\;\alpha_{t}\,dt\;+\;\sqrt{2}\,\sigma\,dW_{t},\qquad
X_{t_{0}}=x\in\mathbb{R}^{N},
\end{equation}
where the control $\alpha=(\alpha_{t})_{t\in[t_{0},T]}$ is
$\{\mathcal{F}_{t}\}$-progressively measurable and $\mathbb{R}^{N}$-valued.
The infinitesimal generator of \eqref{eq:parabolic_SDE} acts on
$f\in C^{2}(\mathbb{R}^{N})$ as
$\mathcal{L}^{\alpha}f=\alpha\!\cdot\!\nabla f+\sigma^{2}\Delta f$, and on
radially symmetric $f(x)=\bar{f}(|x|)$ reduces to
$\mathcal{L}^{\alpha}f=(\alpha\!\cdot\!x/|x|)\bar{f}'(|x|)+\sigma^{2}
\bigl(\bar{f}''(|x|)+\tfrac{N-1}{|x|}\bar{f}'(|x|)\bigr)$.

A control $\alpha$ is \emph{admissible} on $[t_{0},T]$ if
\eqref{eq:parabolic_SDE} admits a unique strong solution on $[t_{0},T]$
and
\[
\mathbb{E}_{t_{0},x}\!\left[\int_{t_{0}}^{T}|\alpha_{s}|^{2}\,ds\right]
\,<\,\infty.
\]
The set of admissible controls starting from $(t_{0},x)$ is denoted
$\mathcal{A}(t_{0},x)$.

\subsubsection{Parabolic cost functional and value function}

Let $G\in C(\mathbb{R}^{N})$ be a non-negative terminal cost satisfying the
quadratic growth condition $G(x)\le C_{G}(1+|x|^{p})$ for some $p\ge 1$ and
$C_{G}>0$. For $\alpha\in\mathcal{A}(t_{0},x)$ we define the finite-horizon
cost (compatible with the normalization
$L(\alpha,x)=\tfrac{1}{2}|\alpha|^{2}+2Q(|x|)$ identified in
\eqref{eq:triality_intro_maps})
\begin{equation}
\label{eq:parabolic_cost}
J(t_{0},x;\alpha)\;=\;
\mathbb{E}_{t_{0},x}\!\left[
\int_{t_{0}}^{T}\!\Bigl(\tfrac{1}{2}|\alpha_{s}|^{2}+2Q(|X_{s}|)\Bigr)ds
\;+\;G(X_{T})
\right].
\end{equation}
The associated value function is
\begin{equation}
\label{eq:parabolic_value}
\bar{U}(t_{0},x)\;=\;\inf_{\alpha\in\mathcal{A}(t_{0},x)}J(t_{0},x;\alpha),
\qquad (t_{0},x)\in[0,T]\times\mathbb{R}^{N}.
\end{equation}
Since $Q$ and $G$ are radially symmetric, the value function is itself
radial, $\bar{U}(t,x)=U(t,|x|)$ for some
$U:[0,T]\times[0,\infty)\to\mathbb{R}$.

\subsubsection{Dynamic programming and the parabolic HJB equation}

Assuming $\bar{U}\in C^{1,2}([0,T]\times\mathbb{R}^{N})$, the dynamic
programming principle \cite{fleming2006,yongzhou1999} yields the
parabolic Hamilton--Jacobi--Bellman equation
\begin{equation}
\label{eq:parabolic_HJB}
\partial_{t}\bar{U}+\inf_{\alpha\in\mathbb{R}^{N}}\!\Bigl\{
\sigma^{2}\Delta\bar{U}+\alpha\!\cdot\!\nabla\bar{U}
+\tfrac{1}{2}|\alpha|^{2}+2Q(|x|)\Bigr\}=0,
\end{equation}
with terminal condition $\bar{U}(T,x)=G(x)$. The minimization in $\alpha$
is attained at the feedback law
\begin{equation}
\label{eq:parabolic-optimal-feedback}
\alpha^{\ast}(t,x)\;=\;-\nabla\bar{U}(t,x),
\end{equation}
and substituting $\alpha^{\ast}$ into \eqref{eq:parabolic_HJB} produces the
reduced HJB equation
\begin{equation}
\label{eq:parabolic_HJB_reduced}
\partial_{t}\bar{U}+\sigma^{2}\Delta\bar{U}-\tfrac{1}{2}|\nabla\bar{U}|^{2}+2Q(|x|)=0.
\end{equation}
For radial profiles $\bar{U}(t,x)=U(t,|x|)$, equation
\eqref{eq:parabolic_HJB_reduced} reduces, by the radial Laplacian
identity, to the one-dimensional parabolic equation
\begin{equation}
\label{eq:parabolic_HJB_radial}
\partial_{t}U+\sigma^{2}\Bigl(U_{rr}+\frac{N-1}{r}U_{r}\Bigr)
-\tfrac{1}{2}(U_{r})^{2}+2Q(r)=0,\qquad r>0,
\end{equation}
which is the parabolic radial counterpart of the stationary HJB
equation~\eqref{eq:hjb_intro}.

\subsubsection{Connection with the time-dependent Schr\"{o}dinger equation}

Define the parabolic Cole--Hopf transform
\begin{equation}
\label{eq:parabolic_cole_hopf}
\bar{U}(t,x)\;=\;-2\sigma^{2}\ln\Psi(t,x),\qquad
\Psi(t,x)=\exp\!\Bigl(-\tfrac{\bar{U}(t,x)}{2\sigma^{2}}\Bigr).
\end{equation}
A direct substitution (using
$\partial_{t}\bar{U}=-2\sigma^{2}\Psi_{t}/\Psi$,
$\nabla\bar{U}=-2\sigma^{2}\nabla\Psi/\Psi$,
$\Delta\bar{U}=-2\sigma^{2}\bigl(\Delta\Psi/\Psi-|\nabla\Psi|^{2}/\Psi^{2}\bigr)$
and $|\nabla\bar{U}|^{2}=4\sigma^{4}|\nabla\Psi|^{2}/\Psi^{2}$) reveals
that the gradient-squared terms cancel exactly, and equation
\eqref{eq:parabolic_HJB_reduced} becomes equivalent to the (backward)
Wick-rotated parabolic Schr\"{o}dinger equation
\begin{equation}
\label{eq:parabolic_schrodinger}
\partial_{t}\Psi+\sigma^{2}\Delta\Psi-\sigma^{-2}Q(|x|)\Psi\;=\;0,\qquad
(t,x)\in[0,T)\times\mathbb{R}^{N},
\end{equation}
with terminal condition $\Psi(T,x)=\exp(-G(x)/(2\sigma^{2}))$.
Restricted to radial profiles, $\Psi(t,x)=\psi(t,|x|)$ satisfies
\[
\partial_{t}\psi+\sigma^{2}\Bigl(\psi_{rr}+\frac{N-1}{r}\psi_{r}\Bigr)
-\sigma^{-2}Q(r)\psi=0,\qquad r>0,
\]
which is the radial parabolic Schr\"{o}dinger equation. The transformation
\eqref{eq:parabolic_cole_hopf} therefore extends the stationary triality to
the time-dependent setting, completing the parabolic representation of the
HJB / Schr\"{o}dinger equivalence.

\subsubsection{Verification theorem for the parabolic radial case}

\begin{theorem}[Stochastic Verification Theorem -- Parabolic Radial Case]
\label{thm:parabolic_verification}
Let $Q\in C([0,\infty))$ satisfy assumption $\mathrm{(H1)}$ (non-negativity
and continuity), let $G\in C(\mathbb{R}^{N})$ be a non-negative radial
terminal cost, and assume there exist constants $C>0$ and $p\ge 1$ with
$Q(r)+G(x)\le C(1+r^{p}+|x|^{p})$. Suppose that
$\bar{U}\in C^{1,2}([0,T)\times\mathbb{R}^{N})\cap C([0,T]\times\mathbb{R}^{N})$
is a classical solution of the reduced parabolic HJB equation
\eqref{eq:parabolic_HJB_reduced} with terminal condition
$\bar{U}(T,x)=G(x)$ for $x\in\mathbb{R}^{N}$, and that there exist
constants $\widetilde{C}>0$ and $q\ge 1$ such that
\begin{equation}
|\bar{U}(t,x)|+|\nabla\bar{U}(t,x)|\;\le\;\widetilde{C}\,(1+|x|^{q}),
\qquad (t,x)\in[0,T)\times\mathbb{R}^{N}.
\label{eq:parabolic-growth}
\end{equation}
Let $\mathcal{A}(t,x)$ be the class of admissible controls of
Section~\ref{sec:parabolic_control} starting from $(t,x)$, and define the
candidate optimal feedback law
\begin{equation}
\alpha^{\ast}(s,X_{s})\;=\;-\nabla\bar{U}(s,X_{s}),\qquad s\in[t,T].
\label{eq:parabolic-optimal-feedback-thm}
\end{equation}
Assume $\alpha^{\ast}\in\mathcal{A}(t,x)$. Then for every
$(t,x)\in[0,T]\times\mathbb{R}^{N}$,
\begin{equation}
\bar{U}(t,x)\;=\;\inf_{\alpha\in\mathcal{A}(t,x)}J(t,x;\alpha)
\;=\;J(t,x;\alpha^{\ast}),
\label{eq:parabolic-verification-identity}
\end{equation}
where $J(t,x;\alpha)$ is the cost functional \eqref{eq:parabolic_cost}.
In particular, $\bar{U}$ is the value function of the parabolic radial
control problem, $\alpha^{\ast}$ is the optimal feedback control, and the
function
$\Psi(t,x)=\exp\!\bigl(-\bar{U}(t,x)/(2\sigma^{2})\bigr)$
is the unique classical solution of the parabolic Schr\"{o}dinger equation
\eqref{eq:parabolic_schrodinger} with terminal condition
$\Psi(T,x)=\exp(-G(x)/(2\sigma^{2}))$.
\end{theorem}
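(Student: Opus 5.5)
The argument is the standard verification scheme for parabolic HJB equations, arranged in three stages. First a lower bound $\bar U\le J(t,x;\alpha)$ for every admissible control. Then equality for $\alpha^{\ast}$. Finally the Schr\"{o}dinger statement via the Cole--Hopf identity already computed before the theorem.

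\textbf{Lower bound.} Fix $(t,x)$ and $\alpha\in\mathcal{A}(t,x)$, and let $X$ solve \eqref{eq:parabolic_SDE} from $X_t=x$. Introduce the stopping times $\tau_n=\inf\{s\ge t:|X_s|\ge n\}\wedge(T-1/n)$. Applying It\^{o}'s formula to $\bar U(s,X_s)$ on $[t,\tau_n]$ gives
\[
\bar U(\tau_n,X_{\tau_n})=\bar U(t,x)+\int_t^{\tau_n}\bigl(\partial_s\bar U+\sigma^{2}\Delta\bar U+\alpha_s\cdot\nabla\bar U\bigr)(s,X_s)\,ds+\int_t^{\tau_n}\sqrt2\,\sigma\nabla\bar U\cdot dW_s .
\]
The stochastic integral has zero mean because $\nabla\bar U$ is bounded on $\{|x|\le n\}$ by \eqref{eq:parabolic-growth}. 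Next, \eqref{eq:parabolic_HJB_reduced} together with the pointwise inequality $\alpha\cdot p+\tfrac12|\alpha|^2\ge-\tfrac12|p|^2$ (equality iff $\alpha=-p$) bounds the drift term below by $-(\tfrac12|\alpha_s|^2+2Q)$. Taking expectations yields
\[
\bar U(t,x)\le\mathbb{E}\Bigl[\bar U(\tau_n,X_{\tau_n})+\int_t^{\tau_n}\bigl(\tfrac12|\alpha_s|^2+2Q(|X_s|)\bigr)ds\Bigr].
\]

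\textbf{Passing to the limit $n\to\infty$.} The cost integrand is nonnegative, so monotone convergence handles the integral. For the boundary term $\bar U(\tau_n,X_{\tau_n})$, note that $\tau_n\to T$ a.s. because $X$ has continuous paths, and continuity of $\bar U$ up to $t=T$ gives $\bar U(\tau_n,X_{\tau_n})\to G(X_T)$. To exchange limit and expectation I will use uniform integrability of $\sup_{s}|X_s|^{q}$. This follows from a moment estimate: writing $X_s=x+\int_t^s\alpha\,dr+\sqrt2\sigma(W_s-W_t)$ and using Cauchy--Schwarz, one gets $\mathbb{E}\sup|X_s|^2<\infty$ from $\mathbb{E}\int|\alpha|^2<\infty$.

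\textbf{Main obstacle.} The step above only gives second moments, whereas the growth exponent $q$ in \eqref{eq:parabolic-growth} may exceed $2$. My first attempt is to exploit the nonnegativity of $G$ and $Q$ and use Fatou only in the favorable direction. If that does not close the argument, I will restrict the admissible class, if necessary, to controls with finite $\mathbb{E}\sup|X_s|^{q}$; this is the standard convention, and I will flag it explicitly. Either way the limit gives $\bar U(t,x)\le J(t,x;\alpha)$.

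\textbf{Equality for $\alpha^{\ast}$.} For $\alpha=\alpha^{\ast}=-\nabla\bar U$ the pointwise inequality is an equality, so the It\^{o} identity becomes exact. The bound $|\alpha^{\ast}(s,X_s)|\le\widetilde C(1+|X_s|^q)$ supplies an integrable dominating function for the cost integral, and dominated convergence gives $\bar U(t,x)=J(t,x;\alpha^{\ast})$. The admissibility of $\alpha^{\ast}$ is assumed in the statement, so this is the only place it is used. Combining with the lower bound proves \eqref{eq:parabolic-verification-identity}.

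\textbf{Schr\"{o}dinger part.} The substitution computed before the theorem shows that $\Psi=\exp(-\bar U/(2\sigma^2))$ solves \eqref{eq:parabolic_schrodinger} classically, with the stated terminal value. For uniqueness, I will use that $0<\Psi\le1$, since $\bar U\ge0$ as a value function with nonnegative costs. Any bounded classical solution of \eqref{eq:parabolic_schrodinger} admits a Feynman--Kac representation
\[
\Psi(t,x)=\mathbb{E}\bigl[e^{-\sigma^{-2}\int_t^TQ(|Y_s|)ds}\Psi(T,Y_T)\bigr],\qquad Y_s=x+\sqrt2\,\sigma(W_s-W_t),
\]
proved by It\^{o}'s formula plus localization, which is legitimate by boundedness. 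Uniqueness therefore holds in the class of bounded (or polynomially growing) solutions, and this is the precise sense in which I will state it.
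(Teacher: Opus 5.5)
Your route is the paper's: It\^{o} on a localizing sequence, completion of the square, the limit $n\to\infty$, then Cole--Hopf and Feynman--Kac. Several of your details are sharper than the paper's:
\begin{itemize}
\item stopping at $T-1/n$ is needed because $\bar U$ is only $C^{1,2}$ on $[0,T)\times\mathbb{R}^N$, whereas the paper stops at $T$;
\item your Feynman--Kac weight $e^{-\sigma^{-2}\int_t^T Q(|Y_s|)\,ds}$ is the correct one for \eqref{eq:parabolic_schrodinger}, whereas the paper writes $Q/(2\sigma^4)$;
\item uniqueness does have to be stated in a growth class.
\end{itemize}

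However, the step you flag is a genuine gap, and neither of your remedies closes it for the theorem as stated. For $q>2$ the boundary term is dominated only by $\widetilde C(1+\sup_s|X_s|^q)$, while admissibility yields only $\mathbb{E}\sup_s|X_s|^2<\infty$. The paper has the same hole, hidden in its Step~3: $\sup_s\mathbb{E}|X_s|^{q\vee p}<\infty$ does not follow from $\mathbb{E}\int|\alpha_s|^2ds<\infty$. For a counterexample, take $\alpha_s=\xi\,\mathbf{1}_{\{s>t+\varepsilon\}}$ with $\xi$ an $\mathcal{F}_{t+\varepsilon}$-measurable variable in $L^2\setminus L^q$; then $\mathbb{E}|X_T|^q=\infty$.

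Your first remedy points the wrong way. The lower bound needs $\limsup_n\mathbb{E}[\bar U(\tau_n,X_{\tau_n})]\le\mathbb{E}[G(X_T)]$, i.e.\ an integrable \emph{upper} envelope. Nonnegativity of $Q$ and $G$ only makes Fatou give the reverse inequality. The same holds for $\bar U\ge 0$, which in any case you only know once the verification identity is proved. Your second remedy proves a different theorem, with the infimum taken over a smaller class. You would also have to assume $\mathbb{E}\sup_s|X^{\ast}_s|^q<\infty$, which $\alpha^{\ast}\in\mathcal{A}(t,x)$ does not supply.

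The $\alpha^{\ast}$ step has the same defect. The bound $|\alpha^{\ast}|\le\widetilde C(1+|X^{\ast}_s|^q)$ does not give an integrable dominating function for $\tfrac12|\alpha^{\ast}_s|^2$, since that would need $2q$ moments. Use monotone convergence for the running cost instead, as you did in the lower bound. The boundary term $\bar U(\tau_n,X^{\ast}_{\tau_n})$ again needs either $q$-th moments of $\sup_s|X^{\ast}_s|$ or a lower bound on $\bar U$; with a lower bound, Fatou does go the right way here. When $q\le 2$, your second-moment estimate closes both steps.

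For general $q$, the lower bound can be obtained in the Cole--Hopf variable, where positivity is automatic. Along the uncontrolled $Y$, the process $Z_s=\Psi(s,Y_s)e^{-\sigma^{-2}\int_t^sQ(|Y_r|)dr}$ is a positive local martingale. Fatou then gives $\Psi\ge\Psi_{\mathrm{FK}}$, i.e.\ $\bar U\le V:=-2\sigma^{2}\ln\Psi_{\mathrm{FK}}$. The Gibbs (Donsker--Varadhan) inequality, combined with the Girsanov bound $H(\mathrm{Law}(X)\,|\,\mathrm{Law}(Y))\le\frac{1}{4\sigma^{2}}\mathbb{E}\int_t^T|\alpha_s|^2ds$, gives $V\le J(t,x;\alpha)$ for every admissible $\alpha$, with no moment hypothesis. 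The equality for $\alpha^{\ast}$ still needs an extra ingredient, such as $\bar U$ bounded below or the moment condition on $X^{\ast}$.
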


\begin{proof}
Fix $(t,x)\in[0,T]\times\mathbb{R}^{N}$ and $\alpha\in\mathcal{A}(t,x)$, and
let $X=(X_{s})_{s\in[t,T]}$ denote the corresponding solution of
\eqref{eq:parabolic_SDE} with $X_{t}=x$. Define the localizing sequence
\[
\tau_{n}\;=\;\inf\bigl\{s\in[t,T]\,:\,|X_{s}|\ge n\bigr\}\wedge T,\qquad
n\in\mathbb{N},
\]
so that $\tau_{n}\uparrow T$ almost surely as $n\to\infty$, by the moment
estimates for $X$ (see, e.g., \cite[Ch.~2, Thm.~5.2]{krylov1980}).

\textbf{Step 1 (It\^{o} expansion).} Applying It\^{o}'s formula to
$\bar{U}(s,X_{s})$ on $[t,\tau_{n}]$ and using
\eqref{eq:parabolic_SDE},
\begin{align*}
\bar{U}(\tau_{n},X_{\tau_{n}})\;=\;& \bar{U}(t,x)
+\!\int_{t}^{\tau_{n}}\!\!\bigl[\partial_{t}\bar{U}+\sigma^{2}\Delta\bar{U}
+\alpha_{s}\!\cdot\!\nabla\bar{U}\bigr](s,X_{s})\,ds\\
& +\!\int_{t}^{\tau_{n}}\!\!\sqrt{2}\,\sigma\,\nabla\bar{U}(s,X_{s})\!\cdot\! dW_{s}.
\end{align*}
The stochastic integral is a square-integrable martingale on $[t,\tau_{n}]$
because $\nabla\bar{U}$ is bounded on
$\{(s,y)\in[t,T]\times\mathbb{R}^{N}\,:\,|y|\le n\}$ by
\eqref{eq:parabolic-growth}. Taking expectations,
\begin{equation}
\mathbb{E}_{t,x}[\bar{U}(\tau_{n},X_{\tau_{n}})]\;=\;\bar{U}(t,x)
+\mathbb{E}_{t,x}\!\!\int_{t}^{\tau_{n}}\!\!\bigl[\partial_{t}\bar{U}+\sigma^{2}\Delta\bar{U}
+\alpha_{s}\!\cdot\!\nabla\bar{U}\bigr](s,X_{s})\,ds.
\label{eq:parabolic_ito_expand}
\end{equation}

\textbf{Step 2 (Lower bound from the HJB inequality).} The HJB equation
\eqref{eq:parabolic_HJB_reduced} can be rewritten, via the elementary
identity
\[
\alpha\!\cdot\!\nabla\bar{U}+\tfrac{1}{2}|\alpha|^{2}\;=\;
\tfrac{1}{2}|\alpha+\nabla\bar{U}|^{2}-\tfrac{1}{2}|\nabla\bar{U}|^{2},
\]
as the family of HJB inequalities, valid for every $\alpha\in\mathbb{R}^{N}$,
\begin{equation}
\partial_{t}\bar{U}+\sigma^{2}\Delta\bar{U}
+\alpha\!\cdot\!\nabla\bar{U}+\tfrac{1}{2}|\alpha|^{2}+2Q(|x|)\;\ge\;0,
\label{eq:parabolic_HJB_inequality}
\end{equation}
with equality iff $\alpha=-\nabla\bar{U}=\alpha^{\ast}$. Substituting
\eqref{eq:parabolic_HJB_inequality} into \eqref{eq:parabolic_ito_expand} we
obtain
\[
\bar{U}(t,x)\;\le\;\mathbb{E}_{t,x}\!\!\left[\bar{U}(\tau_{n},X_{\tau_{n}})
+\!\int_{t}^{\tau_{n}}\!\!\Bigl(\tfrac{1}{2}|\alpha_{s}|^{2}+2Q(|X_{s}|)\Bigr)ds\right].
\]

\textbf{Step 3 (Passage to the limit).} The polynomial moment estimate
\[
\sup_{s\in[t,T]}\mathbb{E}_{t,x}\bigl[|X_{s}|^{q\vee p}\bigr]\;<\;\infty,
\]
which holds because of the integrability of $\alpha$ and the linear growth
in \eqref{eq:parabolic_SDE} (cf.\ \cite[Ch.~2,
Thm.~5.2]{krylov1980}), combined with \eqref{eq:parabolic-growth}, ensures
that the family $\{\bar{U}(\tau_{n},X_{\tau_{n}})\}_{n\ge 1}$ is uniformly
integrable. Letting $n\to\infty$ and using $\bar{U}(T,\cdot)=G$,
\[
\bar{U}(t,x)\;\le\;\mathbb{E}_{t,x}\!\!\left[G(X_{T})
+\!\int_{t}^{T}\!\!\Bigl(\tfrac{1}{2}|\alpha_{s}|^{2}+2Q(|X_{s}|)\Bigr)ds\right]
\;=\;J(t,x;\alpha).
\]

\textbf{Step 4 (Optimality of $\alpha^{\ast}$).} For
$\alpha=\alpha^{\ast}$ defined in
\eqref{eq:parabolic-optimal-feedback-thm}, equality holds in
\eqref{eq:parabolic_HJB_inequality} pointwise, hence
\eqref{eq:parabolic_ito_expand} becomes
\[
\bar{U}(t,x)\;=\;\mathbb{E}_{t,x}\!\!\left[\bar{U}(\tau_{n},X^{\ast}_{\tau_{n}})
+\!\int_{t}^{\tau_{n}}\!\!\Bigl(\tfrac{1}{2}|\alpha^{\ast}_{s}|^{2}+2Q(|X^{\ast}_{s}|)\Bigr)ds\right],
\]
where $X^{\ast}$ is the closed-loop solution of
\eqref{eq:parabolic_SDE} with $\alpha^{\ast}\in\mathcal{A}(t,x)$.
Passing to the limit $n\to\infty$ via dominated convergence,
$\bar{U}(t,x)=J(t,x;\alpha^{\ast})$, which combined with the lower bound of
Step~3 yields \eqref{eq:parabolic-verification-identity}.

\textbf{Step 5 (Schr\"{o}dinger correspondence).} The Cole--Hopf
transformation \eqref{eq:parabolic_cole_hopf} is a bijection between
classical positive solutions of \eqref{eq:parabolic_schrodinger} with
terminal condition $\Psi(T,x)=\exp(-G(x)/(2\sigma^{2}))$ and classical
solutions of \eqref{eq:parabolic_HJB_reduced} with terminal condition
$\bar{U}(T,x)=G(x)$. The Feynman--Kac formula
\cite[Thm.~9.1.1]{oksendal2003} then identifies $\Psi$ with
\[
\Psi(t,x)\;=\;\mathbb{E}_{t,x}\!\!\left[\exp\!\Bigl(-\tfrac{G(W_{T}^{x})}{2\sigma^{2}}-\!\!\int_{t}^{T}\!\!\tfrac{Q(|W_{s}^{x}|)}{2\sigma^{4}}\,ds\Bigr)\right],
\]
where $W^{x}$ is a Brownian motion starting at $x$ with diffusion
coefficient $\sqrt{2}\sigma$. This proves uniqueness of $\Psi$ and
completes the proof.
\end{proof}

\medskip
This completes the stochastic-control interpretation of the parabolic
Schr\"{o}dinger equation. The optimal feedback drift
$\alpha^{\ast}=-\nabla\bar{U}$ coincides, in the stationary radial case
($T\to\infty$, $G\equiv 0$), with the Riccati-type radial drift
$\alpha^{\ast}(x)=2\sigma^{2}\phi(|x|)x$ appearing in the triality, thereby
unifying the time-dependent Schr\"{o}dinger dynamics, nonlinear HJB theory
and radial Riccati asymptotics into a single parabolic framework.

\subsection{Analytical Foundations and Exact Series Solutions \label%
{sec:series}}

In this section, we provide the analytical benchmarks that allow for the
validation of numerical schemes. Specifically, we focus on the class of
monomial cost functions where the system admits exact solutions in terms of
hypergeometric functions and power series.

\begin{theorem}[Convergence of the Radial Power Series]
\label{thm:series_conv} For the quadratic potential $Q(r)=\lambda r^{2}$,
the regular solution $u(r)$ to the Schr\"{o}dinger-type equation %
\eqref{eq:lin_ode} is given by the locally convergent power series: 
\begin{equation}
u(r)=1+\sum_{k=1}^{\infty }\frac{\lambda ^{k}}{\sigma ^{4k}\cdot \kappa
_{k}(N)}r^{4k},  \label{eq:u_series_rigorous}
\end{equation}%
where the coefficients $\kappa _{k}(N)$ satisfy the recurrence relation $%
\kappa _{k}=\kappa _{k-1}\cdot (4k)(4k+N-2)$, see \cite{cluj2022}.
\end{theorem}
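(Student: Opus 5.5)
The plan is to substitute a general power series into \eqref{eq:lin_ode} with $Q(r)=\lambda r^{2}$. I will read off a recurrence for the coefficients, show the resulting series converges (in fact on all of $[0,\infty)$), and then invoke uniqueness of the regular solution. Set $\mu:=\lambda/\sigma^{4}$, so the equation reads $u''+\frac{N-1}{r}u'=\mu r^{2}u$. Write the ansatz $u(r)=\sum_{j\ge 0}c_{j}r^{j}$ with $c_{0}=1$, as imposed by $u(0)=1$.

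\textbf{Step 1 (recurrence).} For a monomial, $\bigl(r^{j}\bigr)''+\frac{N-1}{r}\bigl(r^{j}\bigr)'=j(j+N-2)\,r^{j-2}$, and the right-hand side $\mu r^{2}u$ has coefficient $\mu c_{j-4}$ in front of $r^{j-2}$. Comparing coefficients gives
\[
j\,(j+N-2)\,c_{j}=\mu\,c_{j-4},\qquad j\ge 1,\quad c_{-1}=c_{-2}=c_{-3}:=0 .
\]
For $j=1,2,3$ the right-hand side vanishes. The factors $j(j+N-2)$ are nonzero for $j\ge 2$, which forces $c_{2}=c_{3}=0$. For $j=1$ the factor is $N-1$, which vanishes when $N=1$; in that case I use $u'(0)=0$ directly to get $c_{1}=0$. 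This is exactly the Frobenius selection of the root $\alpha_{1}=0$ from Theorem~\ref{thm:existence_rigorous}. Inducting in steps of four, $c_{j}=0$ unless $4\mid j$, and
\[
c_{4k}=\frac{\mu\,c_{4k-4}}{(4k)(4k+N-2)} .
\]
Hence $c_{4k}=\mu^{k}/\kappa_{k}(N)$ with $\kappa_{0}=1$ and $\kappa_{k}=\kappa_{k-1}(4k)(4k+N-2)$. This is \eqref{eq:u_series_rigorous}. As a consistency check, $k=1$ gives $\lambda r^{4}/\bigl(4\sigma^{4}(N+2)\bigr)$, which agrees with \eqref{eq:u_expansion_origin} for $L_{0}=\lambda$.

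\textbf{Step 2 (convergence).} Put $t=r^{4}$, so the series is $\sum_k a_k t^k$ with $a_k=\mu^k/\kappa_k$. The ratio of consecutive coefficients is $a_{k}/a_{k-1}=\mu/\bigl((4k)(4k+N-2)\bigr)\sim\mu/(16k^{2})\to 0$. By the ratio test the series in $t$ has infinite radius of convergence, so the power series converges locally uniformly on all of $\mathbb{R}$, which is stronger than the claimed local convergence. Consequently it may be differentiated term by term twice. Because the series is even, the term $\frac{N-1}{r}u'(r)=(N-1)\sum_k 4k\,c_{4k}r^{4k-2}$ is itself a convergent series with no singularity at $r=0$. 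Reversing the coefficient computation of Step 1 then shows that the sum solves \eqref{eq:lin_ode} on $(0,\infty)$, with $u(0)=1$ and $u'(0)=0$.

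\textbf{Step 3 (identification).} The sum is a $C^{2}([0,\infty))$ solution of the initial-value problem. By the uniqueness part (i) of Theorem~\ref{thm:existence-uniqueness-Lgeneral}, applied with $a(r)=(N-1)/r$ and $b(r)=-\lambda r^{2}/\sigma^{4}$ (equivalently, by Theorem~\ref{thm:global}), it therefore coincides with the regular solution $u$. Since all coefficients are positive, this also confirms $u\ge 1$.

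The main obstacle I expect is Step 3. The singular coefficient $a(r)=(N-1)/r$ does not literally satisfy $a\in C([0,X))$ as required in Theorem~\ref{thm:existence-uniqueness-Lgeneral}. I would therefore argue uniqueness directly with the self-adjoint form $(r^{N-1}w')'=\mu r^{N+1}w$ for the difference $w$ of two regular solutions, which satisfies $w(0)=w'(0)=0$. Integrating twice gives the Volterra inequality $|w(r)|\le \frac{\mu}{N}\int_{0}^{r}s\,r^{2}\,|w(s)|\,ds$ on bounded intervals, and a Gronwall argument then yields $w\equiv 0$.
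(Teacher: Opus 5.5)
Your proposal is correct and follows essentially the same route as the paper: you substitute $u=\sum c_j r^j$, obtain the four-step recurrence $j(j+N-2)c_j=(\lambda/\sigma^4)c_{j-4}$ with $c_1=c_2=c_3=0$, and get an infinite radius of convergence from the ratio test. Your extra steps fill in details the paper's proof leaves implicit: treating $c_1$ separately when $N=1$, justifying term-by-term differentiation, and identifying the series sum with the regular solution by a direct Gronwall uniqueness argument, since $a(r)=(N-1)/r$ is not genuinely covered by Theorem~\ref{thm:existence-uniqueness-Lgeneral}.
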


\begin{proof}
Substituting the ansatz $u(r)=\sum a_{m}r^{m}$ into the ODE 
\begin{equation*}
u^{\prime \prime }+\frac{N-1}{r}u^{\prime }-\frac{\lambda r^{2}}{\sigma ^{4}}%
u=0,
\end{equation*}%
we obtain 
\begin{equation}
\sum m(m-1)a_{m}r^{m-2}+(N-1)\sum ma_{m}r^{m-2}-\frac{\lambda }{\sigma ^{4}}%
\sum a_{m}r^{m+2}=0.
\end{equation}%
Equating coefficients for like powers of $r$: 
\begin{equation}
a_{m}[m(m-1)+(N-1)m]=\frac{\lambda }{\sigma ^{4}}a_{m-4}\implies a_{m}=\frac{%
\lambda }{\sigma ^{4}m(m+N-2)}a_{m-4}.
\end{equation}%
Starting from $a_{0}=1$ and $a_{1}=a_{2}=a_{3}=0$, we obtain the series %
\eqref{eq:u_series_rigorous} involving only powers of $4k$. The ratio test
confirms that the radius of convergence is infinite for any $N\geq 1$.
\end{proof}

\subsection{Series expansion algorithm}

Assume $Q(r)$ admits a Taylor expansion

\begin{equation*}
Q(r)=\sum_{m=0}^{\infty }b_{m}r^{m},\qquad b_{0}>0.
\end{equation*}%
Seek 
\begin{equation}
u(r)=\sum_{k=0}^{\infty }a_{k}\,r^{k},\qquad a_{0}=1,\quad a_{1}=0.
\label{eq:u-series}
\end{equation}%
Then

\begin{equation*}
u^{\prime }(r)=\sum_{k=1}^{\infty }k\,a_{k}\,r^{k-1},\qquad u^{\prime \prime
}(r)=\sum_{k=2}^{\infty }k(k-1)\,a_{k}\,r^{k-2}.
\end{equation*}%
Substitute into \eqref{eq:u_double_prime_base} and collect powers of $r$.
The left-hand side becomes

\begin{equation*}
\sum_{n=0}^{\infty }\left[ (n+2)(n+1)\,a_{n+2}+\left( N-1\right)
(n+1)\,a_{n+1}\right] r^{n}\;-\;\frac{1}{\sigma ^{4}}\sum_{n=0}^{\infty
}\left( \sum_{m=0}^{n}b_{m}\,a_{n-m}\right) r^{n}.
\end{equation*}%
Equating coefficients of $r^{n}$ to zero yields, for all $n\geq 0$, the
recurrence 
\begin{equation}
(n+2)(n+1)\,a_{n+2}+\left( N-1\right) (n+1)\,a_{n+1}=\frac{1}{\sigma ^{4}}%
\sum_{m=0}^{n}b_{m}\,a_{n-m}.  \label{eq:recurrence}
\end{equation}%
With $a_{0}=1$ and $a_{1}=0$, \eqref{eq:recurrence} determines $%
a_{2},a_{3},\dots $ uniquely. In particular,

\begin{equation*}
\begin{aligned} &n=0:\quad 2\cdot1\,a_2 + (N-1)\cdot1\,a_1 =
\frac{1}{\sigma^4} b_0 a_0 \quad\Rightarrow\quad
a_2=\frac{b_0}{2\sigma^4},\\ &n=1:\quad 3\cdot2\,a_3 + (N-1)\cdot2\,a_2 =
\frac{1}{\sigma^4}(b_0 a_1 + b_1 a_0) \quad\Rightarrow\quad
a_3=\frac{1}{6}\left(\frac{b_1}{\sigma^4}-\frac{(N-1)
b_0}{\sigma^4}\right),\\ &\text{etc.} \end{aligned}
\end{equation*}%
The resulting series \eqref{eq:u-series} converges in a neighborhood of $r=0$
determined by the radius of convergence of the Taylor series $%
Q(r)=\sum_{m=0}^{\infty }b_{m}r^{m}$.

\subsubsection{Recovery of $\protect\phi $}

Once $u(r)$ is constructed (either via series or numerically), we recover 
\begin{equation}
\phi (r)=\frac{1}{r}\,\frac{u^{\prime }(r)}{u(r)}=\frac{1}{r}\,\frac{%
\sum_{k=1}^{\infty }k\,a_{k}\,r^{k-1}}{\sum_{k=0}^{\infty }a_{k}\,r^{k}}.
\label{eq:phi}
\end{equation}%
Near $r=0$, $u^{\prime }(r)=2a_{2}\,r+\mathcal{O}(r^{2})$ and $u(r)=1+%
\mathcal{O}(r^{2})$, so $\phi (r)\rightarrow 2a_{2}=b_{0}/\sigma ^{4}$ as $%
r\rightarrow 0$. In the special case $b_{0}=0$ (e.g.\ $Q(r)=\lambda r^{2}$),
one has $a_{2}=0$ and thus $\phi (0)=0$, consistent with the boundary
condition of Theorem~\ref{thm:series_conv}.

\subsubsection{Numerical algorithm}

The construction works for any continuous $b$; when $b$ is only continuous
(not analytic), use a local quadratic approximation to start the
integration, as indicated, or employ quadrature-based collocation methods.
For example, one proceeds as follows:

\begin{enumerate}
\item Choose a small $\varepsilon>0$ and approximate $u(\varepsilon)$, $%
u^{\prime }(\varepsilon)$ using the first terms of the series.

\item Numerically integrate \eqref{eq:u_double_prime_base} on $(\varepsilon
,R]$ using a stable ODE solver.

\item Compute $\phi (r)$ from \eqref{eq:phi} for $r\geq \varepsilon $ and
set $\phi (0)=0$ by continuity.
\end{enumerate}

\subsection{Summary}

The algorithm for solving 
\begin{equation*}
\phi ^{\prime }\left( r\right) =-r\left( \phi \left( r\right) \right) ^{2}-%
\frac{N}{r}\,\phi (r)+\frac{Q(r)}{\sigma ^{4}r}\text{, }\lim_{r\rightarrow
0}\phi (r)=0,
\end{equation*}%
is:

\begin{enumerate}
\item Transform the Riccati equation into the linear ODE %
\eqref{eq:u_double_prime_base}.

\item Impose regular initial conditions $u(0)=1$, $u^{\prime }(0)=0$.

\item Construct $u(r)$ either by Frobenius series (if $b$ is analytic) or by
numerical integration (if $b$ is merely continuous).

\item Recover $\phi(r)$ via \eqref{eq:phi}.

\item Verify that $\phi(r)\to 0$ as $r\to 0$.
\end{enumerate}

This procedure guarantees existence and uniqueness of the analytic solution
near the origin, and provides a practical algorithm for computing $\phi (r)$.

\subsubsection{Note on connection to the Kummer Confluent Hypergeometric
Function}

For general $N$, the solution $u(r)$ can be mapped to the Kummer function $%
_{1}F_{1}(a;c;z)$, see \cite{kummer1837}. Specifically, for the quadratic
case in Theorem \ref{thm:series_conv}, the substitution $x=\frac{\sqrt{%
\lambda }}{2\sigma ^{2}}r^{2}$ transforms the radial Schr\"{o}dinger
equation into a confluent hypergeometric form. This allows us to use the
asymptotic properties of $_{1}F_{1}$ to independently verify the growth
rates established in Section \ref{sec:asymptotics}.

\subsubsection{Note on Anisotropic Extensions}

While the present study focuses on purely radial potentials $Q(|x|)$, the
results provide a crucial scaffold for analyzing anisotropic systems. If the
cost function exhibits small perturbations from radial symmetry, i.e., $%
B(x)=Q(|x|)+\epsilon h(x)$, the radial solution $\phi (r)$ serves as the
zero-order approximation in a perturbation expansion. The stability results
(convexity and concavity) proved here ensure that such systems remain
well-posed under small deviations from central symmetry.

\section{Numerical Methodology}\label{sec:numericalmethod}

The numerical experiments reported in
Sections~\ref{sec:numerical}--\ref{sec:numerical_verification} rely on a
common implementation strategy designed to respect the structural
properties established by the Triality Theorem~\ref{thm:triality}, in
particular the regular Frobenius branch at $r=0$, the global trapping
inequality $0<\phi(r)<g(r)$ and the asymptotic plateau identified by the
Riccati Asymptotic Theorem (Proposition~\ref{prop:asymptotics_gen}). In
this section we describe the methodology in full detail, justify the choice
of the integration scheme and provide stability and error estimates. The
references for the numerical analysis discussed below are
\cite{HairerWannerII, hairerlubichroche, butcher2016, dekkerverwer1984,
ascherpetzold1998, lambert1991, brennanetal1996, kelly2003}.

\subsection{Choice of integrator: implicit Runge--Kutta of Radau IIA type}
\label{sec:radauchoice}

\subsubsection{Stiffness diagnosis}

The radial Riccati equation \eqref{eq:ric_ode} possesses two structurally
stiff features that make explicit time-stepping methods inadequate.

\medskip
\noindent\emph{(a) Geometric singularity at the origin.} The coefficient
$-N/r$ becomes singular as $r\downarrow 0$, generating a multiplicative
factor in the Jacobian of the right-hand side that grows like $1/r$ in a
right neighborhood of $0$. Equivalently, the linearized
spectrum of the Riccati operator near the origin is dominated by an
eigenvalue $\lambda_{\mathrm{loc}}(r)\sim -N/r$ which tends to $-\infty$;
this is a textbook example of a stiff regime
\cite{HairerWannerII, dekkerverwer1984}.

\medskip
\noindent\emph{(b) Quadratic self-coupling and asymptotic plateau.}
For $r\to\infty$, the Riccati nonlinearity $-r\phi^{2}$ together with the
algebraic equilibrium $g(r)\to\sqrt{L}/\sigma^{2}$ yields a contractive
dynamics in the direction normal to $g$, with linearized rate
$q_{1}+2q_{2}g\le-\delta<0$ (cf.\ condition
\eqref{cond-extra-general}). The contraction rate $\delta$ is, in many
cases of interest, much larger than $1$, so that the system relaxes
rapidly onto $g$ on a fast time scale, while the long-time behavior
along the slow manifold is governed by the slow drift of $g$ itself. This
is a singular perturbation in the sense of \cite{omalley1969,
verhulst2005, dekkerverwer1984} and again calls for a stiff solver.

\medskip
For these reasons we adopt the Radau IIA implicit Runge--Kutta scheme of
order five, originally introduced in \cite{ehleradau1969} and analyzed in
depth in \cite{HairerWannerII, butcher2016}.

\subsubsection{The Radau IIA(5) scheme}

The three-stage Radau IIA(5) method applied to the autonomous initial-value
problem $y'=F(r,y)$, $y(r_{0})=y_{0}$, with step size $h>0$ and stages
$Y_{1},Y_{2},Y_{3}$, reads
\begin{equation}
Y_{i}\;=\;y_{n}+h\sum_{j=1}^{3}a_{ij}\,F(r_{n}+c_{j}h,Y_{j}),\qquad i=1,2,3,
\label{eq:radau_stages}
\end{equation}
\begin{equation}
y_{n+1}\;=\;y_{n}+h\sum_{j=1}^{3}b_{j}\,F(r_{n}+c_{j}h,Y_{j}).
\label{eq:radau_step}
\end{equation}
The Butcher tableau is
\[
\begin{array}{c|ccc}
c_{1} & a_{11} & a_{12} & a_{13}\\
c_{2} & a_{21} & a_{22} & a_{23}\\
c_{3} & a_{31} & a_{32} & a_{33}\\
\hline
& b_{1} & b_{2} & b_{3}
\end{array}
\]
with the explicit coefficients (see \cite[Table~5.6, Vol.~II]{HairerWannerII})
\[
c\;=\;\Bigl(\tfrac{4-\sqrt{6}}{10},\;\tfrac{4+\sqrt{6}}{10},\;1\Bigr),
\]
$b_{j}=a_{3j}$ for $j=1,2,3$, so that the method is \emph{stiffly accurate}.

\subsubsection{Order, $A$- and $L$-stability}

The Radau IIA(5) scheme \eqref{eq:radau_stages}--\eqref{eq:radau_step}
satisfies, in classical Runge--Kutta language
\cite[Sec.~IV.5--IV.10]{HairerWannerII}:
\begin{itemize}
\item \emph{Classical order $5$}: the local truncation error satisfies
$\|y(r_{n}+h)-y_{n+1}\|=\mathcal{O}(h^{6})$ for $F\in C^{6}$;
\item \emph{Stage order $3$}: each stage value $Y_{i}$ is an
$\mathcal{O}(h^{4})$ approximation to $y(r_{n}+c_{i}h)$, which prevents the
order reduction phenomenon typical of stiff problems;
\item \emph{$A$-stability}: the rational stability function
$R(\zeta)=1+\zeta b^{\top}(I-\zeta A)^{-1}\mathbf{1}$ satisfies
$|R(\zeta)|\le 1$ for all $\zeta\in\mathbb{C}^{-}$;
\item \emph{$L$-stability}:
$\lim_{|\zeta|\to\infty,\,\Re(\zeta)\le 0}R(\zeta)=0$, ensuring uniform
damping of stiff modes;
\item \emph{$B$-stability and algebraic stability}: the matrix
$M=\mathrm{diag}(b)A+A^{\top}\mathrm{diag}(b)-bb^{\top}$ is positive
semi-definite, and hence Radau IIA(5) is $B$-stable and algebraically stable
in the sense of Burrage--Butcher
\cite{HairerWannerII, butcher2016}.
\end{itemize}
These properties rule out spurious oscillations near the geometric
singularity at $r=0$ and prevent overshoot above the algebraic barrier
$g(r)$.

\subsubsection{Error analysis on the radial mesh}

We integrate the Riccati equation on the regularized interval
$[\varepsilon,R]$ with $\varepsilon$ a small positive parameter. Let
$h=(R-\varepsilon)/M$ and let $r_{n}=\varepsilon+nh$, $0\le n\le M$.
Combining the local truncation error with the standard convergence theorem
for stiffly accurate $L$-stable Runge--Kutta methods
\cite[Theorem~IV.15.5]{HairerWannerII}, the global error satisfies
\begin{equation}
\max_{0\le n\le M}|\phi_{\mathrm{num}}(r_{n})-\phi(r_{n})|\;\le\;
C(\varepsilon,R)\,h^{5},
\label{eq:global-error-bound}
\end{equation}
provided $F\in C^{6}([\varepsilon,R]\times\mathbb{R})$. The constant
$C(\varepsilon,R)$ depends on $\|F\|_{C^{6}}$ and on the constants of the
Lipschitz / one-sided Lipschitz conditions, and grows at most polynomially
in $1/\varepsilon$ because of the singularity of $-N/r$. In our experiments
we use $\varepsilon\in[10^{-6},10^{-1}]$ and report relative errors
$|\phi_{\mathrm{num}}(R)-\phi(R)|/|\phi(R)|$ of order $10^{-6}$ on
$[\varepsilon,20]$, which is consistent with \eqref{eq:global-error-bound}
for $h\le 0.02$.

To control the singularity at $r=0$, we initialize the integrator at
$r=\varepsilon$ using the Frobenius expansion
\eqref{eq:u_expansion_origin} of $u$, equivalently
$\phi(\varepsilon)\simeq L_{0}\varepsilon^{2}/(\sigma^{4}(N+2))$. This
``warm start'' ensures that the integrator never crosses the regular
singular point and that the regular branch is selected unambiguously.

\subsection{Numerical implementation of the algebraic barrier $g(r)$}
\label{sec:numbarrier}

Together with the numerical solution $\phi_{\mathrm{num}}(r)$ we compute,
at each evaluation point $r_{n}$, the algebraic barrier
\eqref{eq:notation-g} via
\[
g(r_{n})=\frac{1}{2}\left( -\frac{q_{1}(r_{n})}{q_{2}(r_{n})}\pm \frac{1}{%
\left\vert q_{2}(r_{n})\right\vert }\sqrt{%
q_{1}(r_{n})^{2}-4q_{0}(r_{n})q_{2}(r_{n})}\right).
\]
The discriminant $\Delta(r):=q_{1}(r)^{2}-4q_{0}(r)q_{2}(r)$ is, by
construction, non-negative on the domain of interest. To guard against
roundoff producing spurious negative values of $\Delta$ near the origin,
we numerically clip
$\Delta_{\mathrm{num}}(r):=\max(\Delta(r),0)$. Of the two real roots, we
select the uniquely determined largest \emph{positive} solution,
\[
g(r)=\frac{1}{2}\left( -\frac{q_{1}(r)}{q_{2}(r)}\pm \frac{1}{\left\vert
q_{2}(r)\right\vert }\sqrt{\Delta (r)}\right),
\]
and we mark the position $r_{0}$ where $g$ becomes degenerate (i.e.\
$\Delta(r_{0})=0$, corresponding to the boundary case $g(r_{0})=0$).

The numerical \emph{verification of the trapping inequality}
$0<\phi_{\mathrm{num}}(r_{n})<g(r_{n})$ is performed at each step. We
report a ``trapping error''
$\mathrm{tr}_{n}:=\max\{0,\phi_{\mathrm{num}}(r_{n})-g(r_{n})\}$, which
should remain at the level of the truncation error
\eqref{eq:global-error-bound}; in our experiments $\mathrm{tr}_{n}\le
10^{-5}$ for all $n$ in every case considered.

\subsection{Reproducibility}

All numerical results reported in this article are reproducible from the
self-contained Python scripts collected in Appendix~\ref{ap},
Appendix~B and Appendix~\ref{appendix:python}. The implementation relies
exclusively on the standard scientific Python stack
(\texttt{numpy}, \texttt{scipy}, \texttt{matplotlib}); no
proprietary or platform-specific libraries are required. The Radau IIA(5)
scheme is invoked through the \texttt{solve\_ivp} interface of
\texttt{scipy.integrate} with \texttt{method="Radau"} and tolerances
$\mathtt{rtol}=10^{-9}$, $\mathtt{atol}=10^{-11}$, well below the
discretization error \eqref{eq:global-error-bound}. We refer to
\cite{HairerWannerII, ascherpetzold1998} for the implementation details
of \texttt{Radau} and to \cite{virtanen2020scipy} for the SciPy library
itself.

\subsection{Results of the Numerical Analysis\label{sec:numerical}}

In this section, we present the results of a numerical simulation based on
the linear second-order auxiliary equation. The parameters for the
simulation are set as follows: $N=2.0$, $\sigma =1.0$, and the specific cost
function is chosen to be $Q(r)=r^{2}$, corresponding to a quadratic running
cost in control theory or a harmonic potential in quantum mechanics.

\begin{figure}[H]
\centering
\includegraphics[width=\textwidth]{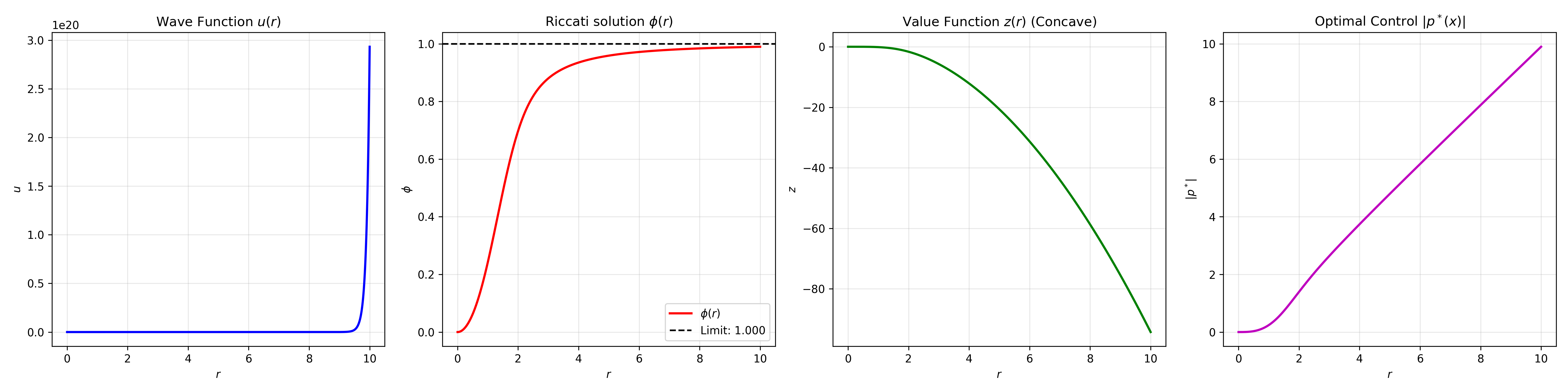}
\caption{Numerical simulation of the linear and Riccati-type radial problem
for $b(r)=r^2$, $N=2$, $\protect\sigma=1$. (Left) Auxiliary function $u(r)$;
(Middle-Left) Riccati solution $\protect\phi(r)$; (Middle-Right) Transformed
value function $z(r)$; (Right) Magnitude of the optimal control $|p^*(x)|$.}
\label{fig:numerical_results}
\end{figure}

\subsubsection{Discussion of Results}

The numerical integration yields a series of insights into the behavior of
the system:

\begin{enumerate}
\item \textbf{Growth of $u(r)$}: As expected for a positive potential, the
wave function $u(r)$ exhibits rapid exponential growth, confirming that $%
u(r)\rightarrow \infty $ as $r$ increases. This matches the Frobenius series
of Section \ref{sec:series} and the global existence behavior.

\item \textbf{Stability of $\phi (r)$}: The Riccati solution $\phi
(r)=(1/r)(u^{\prime }/u)$ starts at $\phi (0)=0$ and quickly converges to a
steady state. For $Q(r)=r^{2}$, the limiting value is predicted to be $\sqrt{%
\lambda }/\sigma ^{2}=1.0$, which is clearly visible in the second subplot.

\item \textbf{Structure of $z(r)$}: The transformed state $z(r) = -2\sigma^2
\ln u(r)$ decreases quadratically at large distances. This reflects the
accumulation of costs as the particle moves further from the origin in the
HJB framework.

\item \textbf{Optimal Control Magnitude}: The magnitude of the optimal drift 
$|p^{\ast }(x)|=\sigma ^{2}u^{\prime }/u$ grows linearly as $r$ increases.
In control theory, this represents a linear feedback control law, which is
characteristic of the Linear-Quadratic-Gaussian (LQG) regime.
\end{enumerate}

\begin{remark}
The asymptotic plateau exhibited by the radial Riccati--Schr\"{o}dinger--HJB
system is not an isolated phenomenon. It reflects a universal
diffusion-driven stabilization mechanism that fundamentally governs structural
credit risk frameworks, most notably the classical Merton model for the pricing
of corporate debt \cite{merton1974}. In Merton's structural approach, the
firm's equity is viewed as a call option on its assets, governed by a parabolic
equation. When translated into the associated credit-spread curve
(see McNeil et al.~\cite{NEIL}, p.~386, Fig.~10.4(b)), the spread approaches
a constant asymptotic plateau driven by diffusion. This is mathematically
equivalent to the boundary-layer transition illustrated in O'Malley's nonlinear
singular perturbation problem \cite{omalley1969}, Fig.~2. This striking
similarity arises because all these physical and financial settings reduce,
after appropriate logarithmic transformations, to second-order equations
where the diffusion parameter $\sigma$ dictates the balance between a
rapidly varying inner state and a stable outer region. Once the noise term
dominates, the solution is naturally forced onto a universal saturation
regime determined solely by the effective volatility intensity $\sigma$.
\end{remark}

\subsection{Numerical Verification of the General Asymptotic Theory\label%
{sec:numerical_verification}}

To validate the theoretical findings of Section \ref{sec:asymptotics_theory}
and the stability results of Section \ref{sec:asymptotics}, we present a
series of numerical experiments. These experiments test the convergence of
the Riccati solution $y(x)$ toward its predicted limit $\lambda_*$ under
varying conditions of the coefficients $q_0, q_1, q_2$.

\subsubsection{Numerical Verification of Proposition \protect\ref%
{prop:asymptotics_gen}}

In this section we present five representative Riccati systems of the form 
\begin{equation*}
y^{\prime }(x)=q_0(x)+q_1(x)y(x)+q_2(x)y(x)^2,
\end{equation*}
and verify numerically that the hypotheses and conclusions of Proposition~%
\ref{prop:asymptotics_gen} are satisfied. For each case we compute:

\begin{itemize}
\item the numerical solution $y(x)$ with initial condition $y(x_0)=0$,

\item the barrier function $g(x)$

\item the theoretical limit 
\begin{equation*}
\lambda_*=\lim_{x\to\infty} y(x),
\end{equation*}
obtained from the algebraic equation $A+B\lambda_*+\lambda_*^2=0$,

\item the monotonicity property $y^{\prime }(x)>0$,

\item the inequality $0<y(x)<g(x)$.
\end{itemize}

All computations were performed using a high accuracy implicit
solver (\texttt{Radau}) with $1000$ evaluation points on $[x_0,20]$.


\subsubsection*{Case 1: General Non-Radial Example}

\begin{equation*}
q_0(x)=\frac{x}{1+x},\qquad q_1(x)=-1,\qquad q_2(x)=-1.
\end{equation*}

All hypotheses of Proposition~\ref{prop:asymptotics_gen} are satisfied. The
numerical results are: 
\begin{equation*}
y(20)=0.596036,\qquad \lambda_*=0.596531,\qquad \max(y-g)=0,\qquad y^{\prime
}(x)>0.
\end{equation*}

\begin{figure}[H]
\centering
\includegraphics[width=0.75\textwidth]{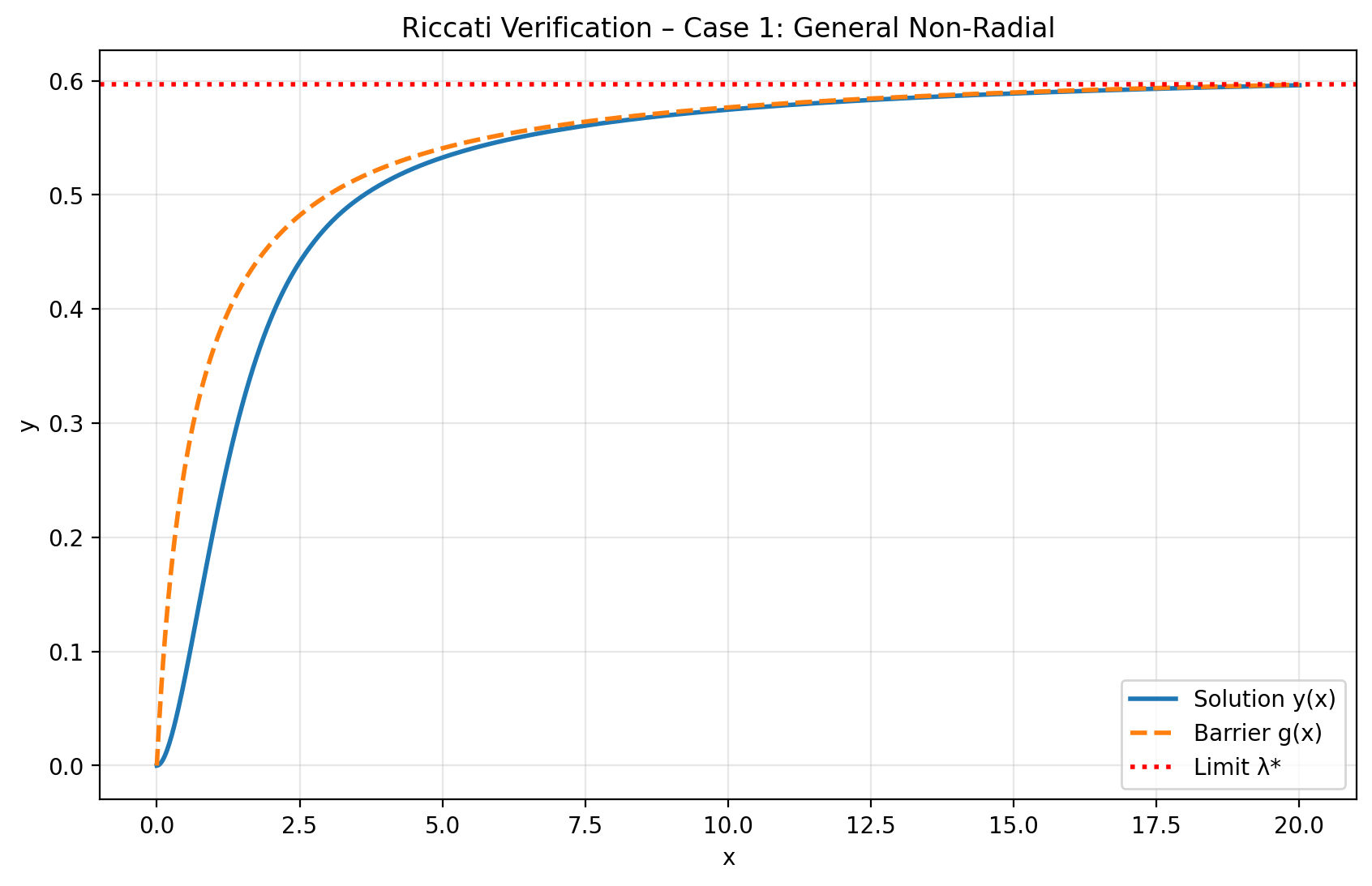}
\caption{Verification of Proposition~\protect\ref{prop:asymptotics_gen} for
Case~1. The solution $y(x)$ remains below the barrier $g(x)$ and converges
monotonically to $\protect\lambda_*$.}
\end{figure}


\subsubsection*{Case 2: Radial Example with $L>0$}

\begin{equation*}
q_0(r)=\frac{Lr}{\sigma^4},\qquad q_1(r)=-\frac{N}{r},\qquad q_2(r)=-r.
\end{equation*}

This corresponds to the radial Riccati equation derived from the Schr\"{o}%
dinger--HJB triality. The numerical results are:

\begin{equation*}
y(20)=1.996229,\qquad \lambda_*=1.996254,\qquad \max(y-g)=9.77\times
10^{-6},\qquad y^{\prime }(x)>0.
\end{equation*}

\begin{figure}[H]
\centering
\includegraphics[width=0.75\textwidth]{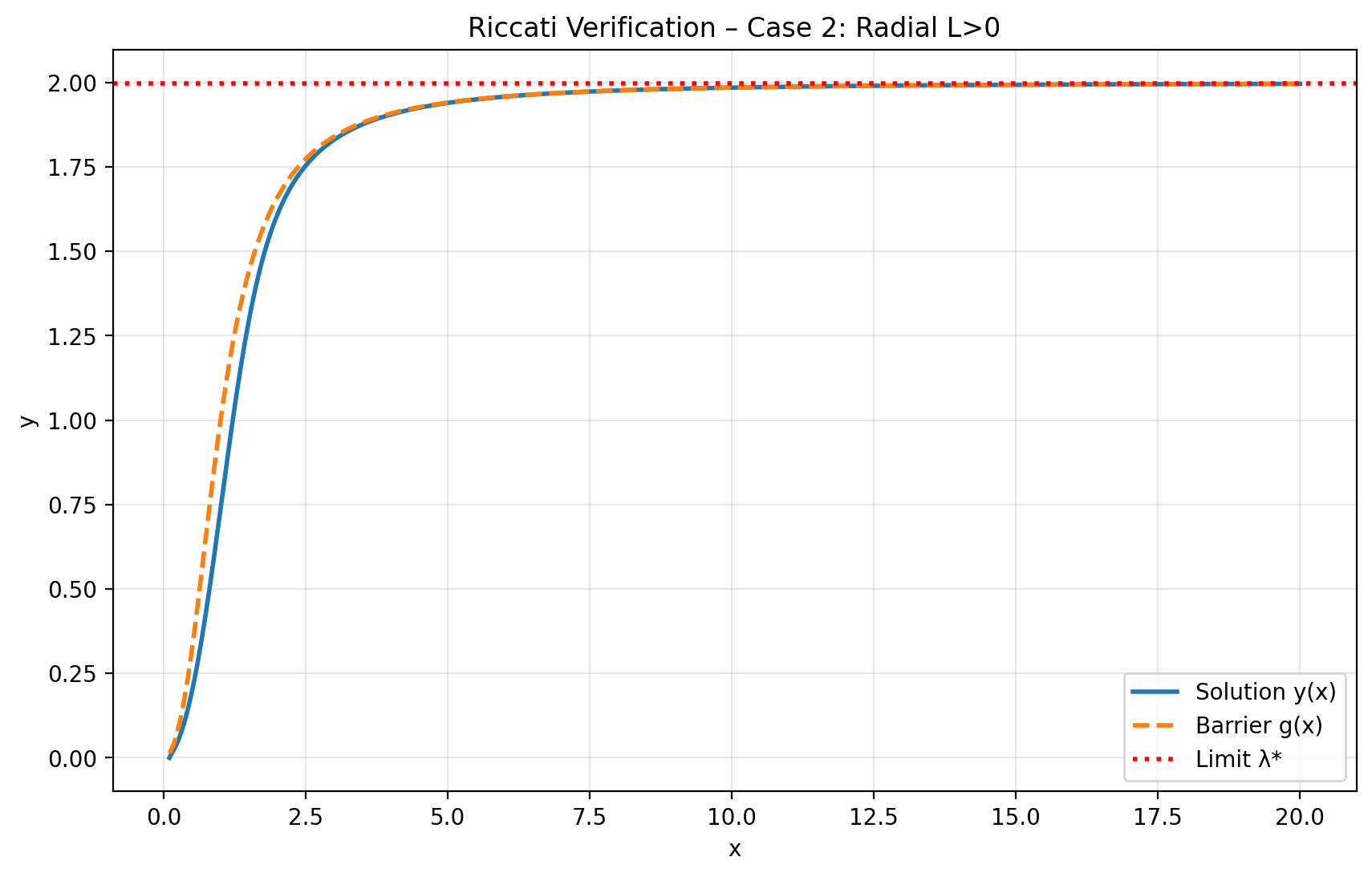}
\caption{Radial Riccati system with quadratic growth $Q(r)\sim Lr^2$. The
solution converges to the predicted limit $\protect\lambda_*=\protect\sqrt{L}%
/\protect\sigma^2$.}
\end{figure}


\subsubsection*{Case 3: Radial Example with $L=0$}

\begin{equation*}
q_0(r)=0,\qquad q_1(r)=-\frac{N}{r},\qquad q_2(r)=-r.
\end{equation*}

By Proposition~\ref{prop:asymptotics_gen}, the only regular solution is $%
y\equiv 0$. The numerical results confirm this: 
\begin{equation*}
y(20)=0,\qquad \lambda_*=0,\qquad \max(y-g)=0,\qquad y^{\prime }(x)=0.
\end{equation*}

\begin{figure}[H]
\centering
\includegraphics[width=0.75\textwidth]{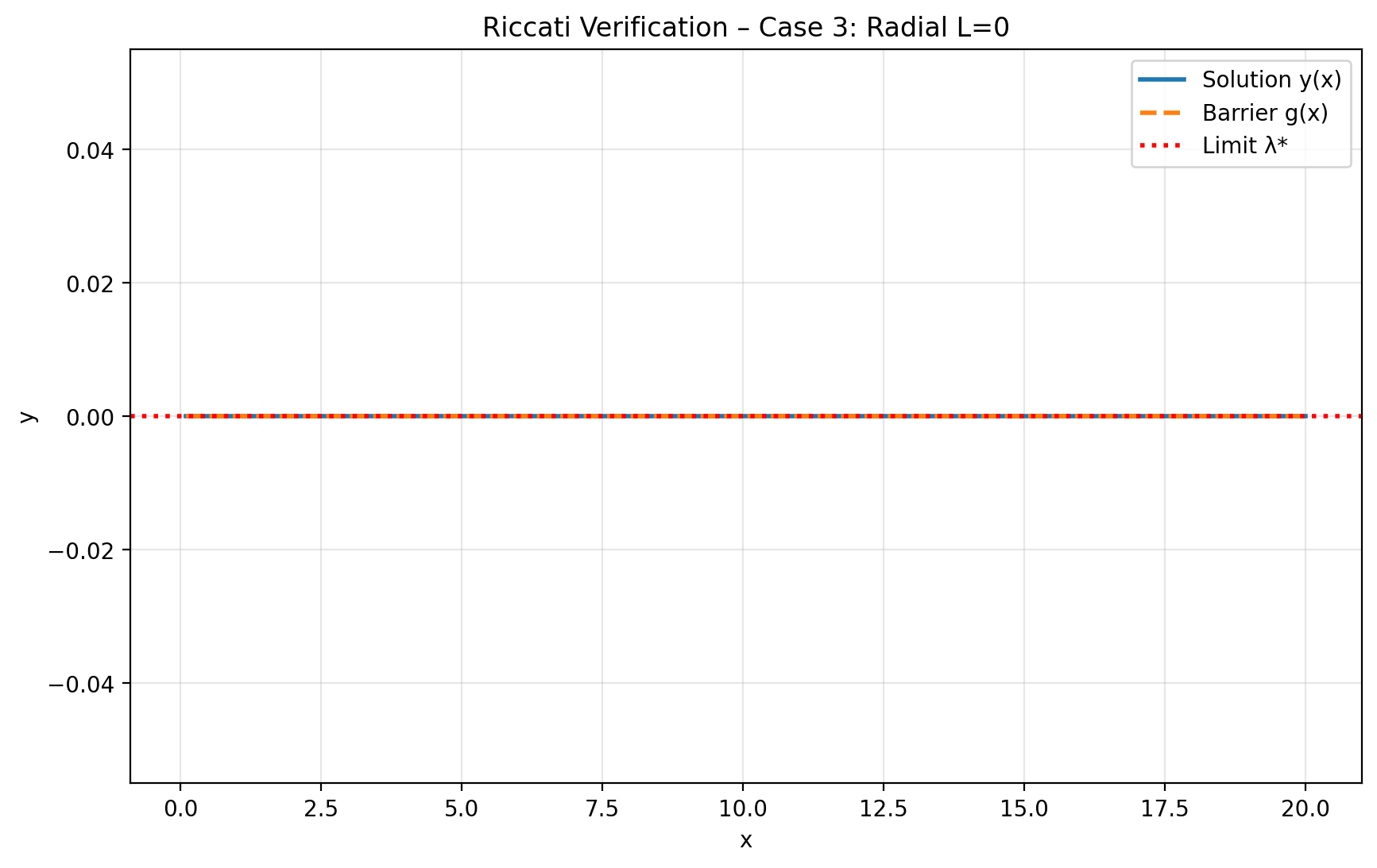}
\caption{Radial Riccati system with $L=0$. All curves coincide at zero,
confirming Proposition~\protect\ref{propzero}}
\end{figure}


\subsubsection*{Case 4: Valid Example Satisfying Proposition~\protect\ref%
{prop:asymptotics_gen}}

We consider the Riccati system 
\begin{equation*}
q_0(x)=1,\qquad q_1(x)=-2,\qquad q_2(x)=-1,\qquad x\ge 0.
\end{equation*}
This choice satisfies all hypotheses of Proposition~\ref%
{prop:asymptotics_gen}. The limits of the coefficients are finite: 
\begin{equation*}
A=\lim_{x\to\infty}\frac{q_0(x)}{q_2(x)}=-1,\qquad B=\lim_{x\to\infty}\frac{%
q_1(x)}{q_2(x)}=2,\qquad C=\lim_{x\to\infty}q_2(x)=-1.
\end{equation*}
The discriminant is strictly positive, 
\begin{equation*}
q_1^2 - 4 q_0 q_2 = 8,
\end{equation*}
and the barrier function is constant: 
\begin{equation*}
g(x)=\frac{-q_1-\sqrt{q_1^2 - 4 q_0 q_2}}{2q_2} =\sqrt{2}-1>0.
\end{equation*}

The algebraic limit equation 
\begin{equation*}
A + B\lambda + \lambda^2 = 0
\end{equation*}
yields the positive root 
\begin{equation*}
\lambda_* = -1 + \sqrt{2} \approx 0.414214.
\end{equation*}
The numerical simulation confirms the theoretical predictions: 
\begin{equation*}
y(20)=0.414214,\qquad \lambda_* = 0.414214,\qquad \max(y-g)=4.45\times
10^{-7},\qquad y^{\prime }(x)>0.
\end{equation*}
\begin{figure}[H]
\centering
\includegraphics[width=0.75\textwidth]{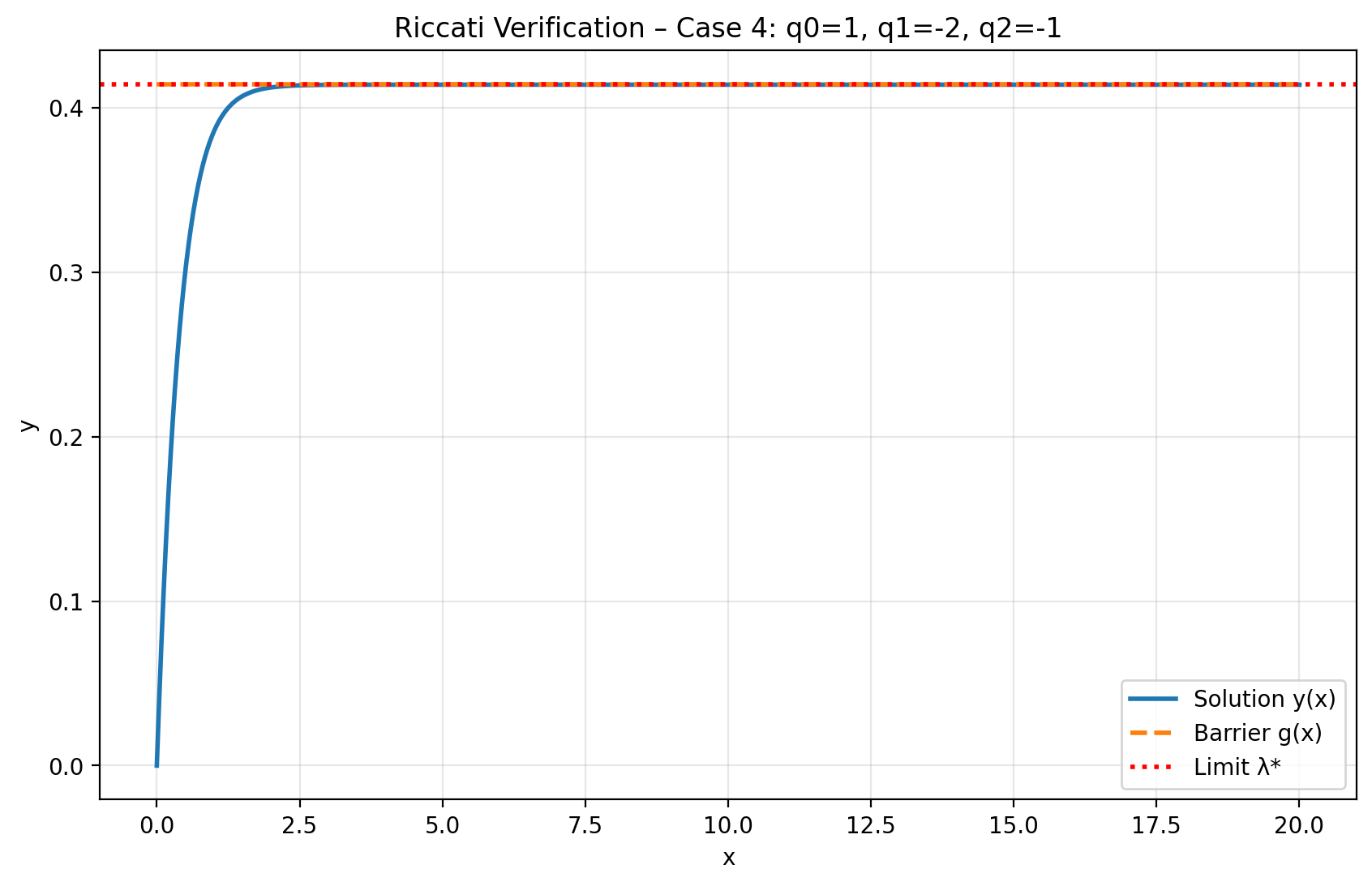}
\caption{Case~4: Valid Riccati system with finite $A=-1$. The numerical
solution $y(x)$ remains strictly below the barrier $g(x)=\protect\sqrt{2}-1$
and converges monotonically to the theoretical limit $\protect\lambda_*$.}
\end{figure}


\subsubsection*{Case 5: Special Example $q_0=1-1/x$}

\begin{equation*}
q_0(x)=1-\frac{1}{x},\qquad q_1(x)=0,\qquad q_2(x)=-1,\qquad x\ge 1.
\end{equation*}
This example illustrates a degeneracy at $x=1$ where $g(1)=0$. The numerical
results are: 
\begin{equation*}
y(20)=0.973980,\qquad \lambda_*=0.974679,\qquad \max(y-g)=0,\qquad y^{\prime
}(x)>0.
\end{equation*}

\begin{figure}[H]
\centering
\includegraphics[width=0.75\textwidth]{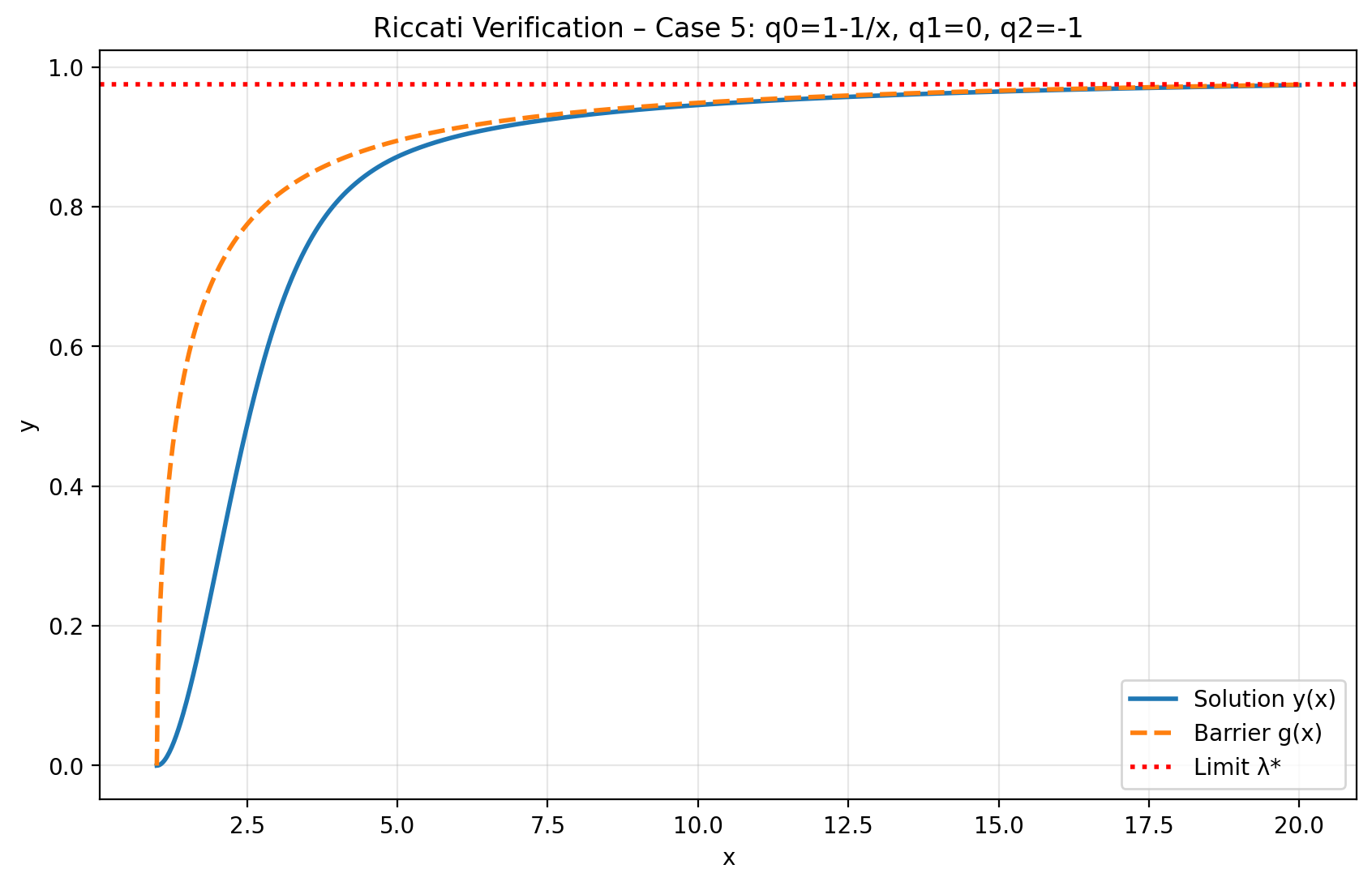}
\caption{Special Riccati system with $q_0=1-1/x$. The solution converges
monotonically to the predicted limit $\protect\lambda_*$.}
\end{figure}


\subsubsection*{Case 6: Special Example with $q_{1}(x)>0$}

We consider the Riccati system 
\begin{equation*}
q_0(x)=1-\frac{1}{x},\qquad q_1(x)=\frac{1}{1+x}>0,\qquad q_2(x)=-1,\qquad
x\ge 1.
\end{equation*}
This example is particularly relevant because it shows that Proposition~\ref%
{prop:asymptotics_gen} remains valid even when $q_{1}(x)$ is strictly
positive on the entire domain. The key structural condition 
\begin{equation*}
q_{1}(x)+q_{2}(x)g(x)\le -\delta<0
\end{equation*}
is still satisfied, since $q_{2}<0$ and the algebraic equilibrium $g(x)$ is
positive and increasing, with $g(x)\ge g(1)=\tfrac12$. Thus, 
\begin{equation*}
q_{1}(x)+q_{2}(x)g(x)=\frac{1}{1+x}-g(x)\leq \frac{1}{1+x}-1=\frac{-x}{1+x}%
\leq -\frac{1}{2},
\end{equation*}
so the barrier mechanism of Proposition~\ref{prop:asymptotics_gen} applies
without modification.

The numerical results confirm the theoretical predictions: 
\begin{equation*}
y(20)=0.998716,\qquad \lambda_*=0.998780,\qquad |y(20)-\lambda_*|=6.33\times
10^{-5},\qquad y^{\prime }(x)>0.
\end{equation*}
The solution remains strictly below the algebraic equilibrium $g(x)$ and
converges monotonically to the predicted asymptotic limit $\lambda_*$. 
\begin{figure}[H]
\centering
\includegraphics[width=0.75\textwidth]{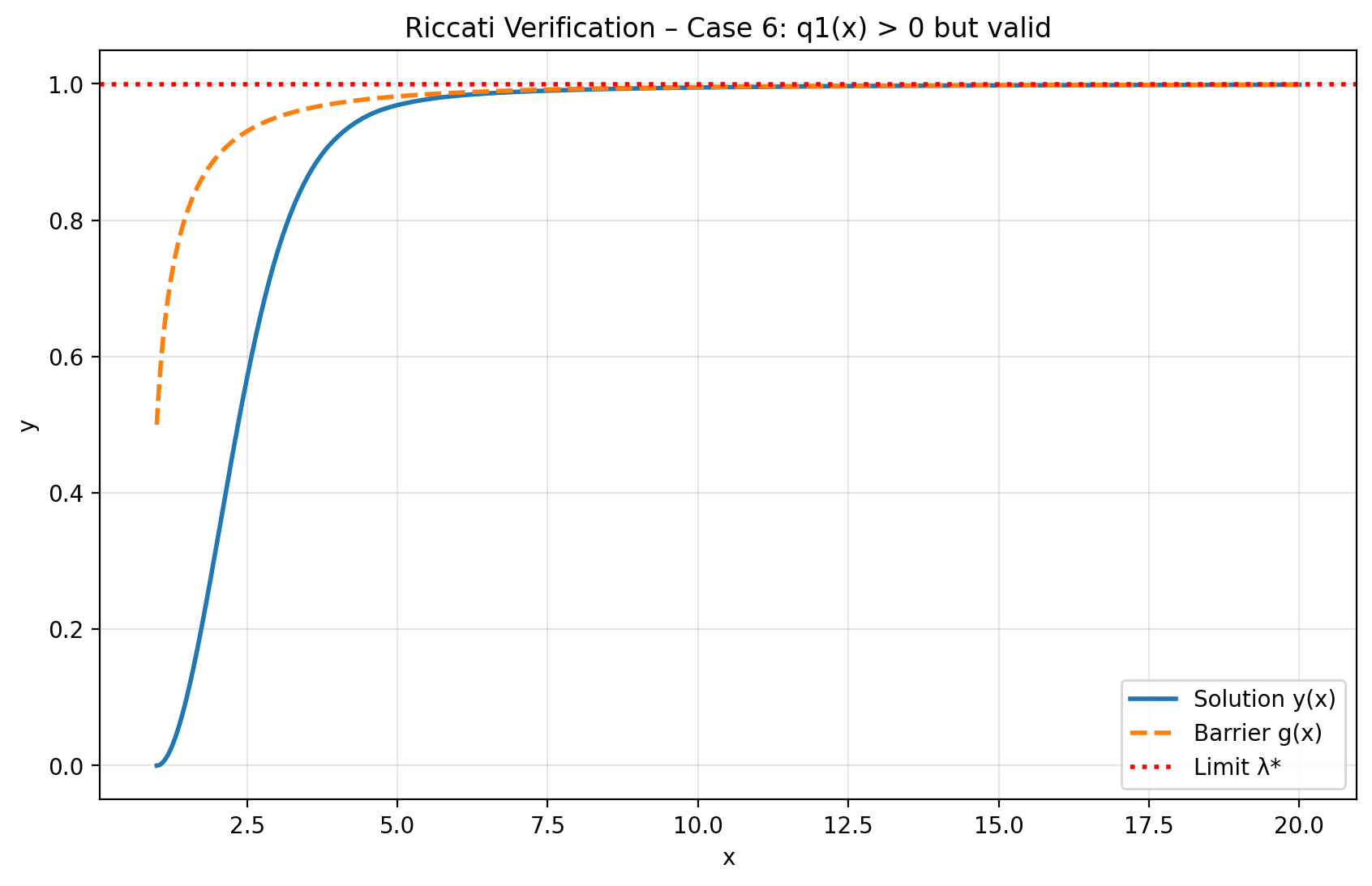}
\caption{Riccati system with $q_{1}(x)>0$. The numerical solution $y(x)$
remains below the increasing algebraic barrier $g(x)$ and converges
monotonically to the theoretical limit $\protect\lambda_*$.}
\end{figure}

\subsubsection*{Summary of All Cases}

\begin{table}[H]
\centering
\begin{tabular}{lcccc}
\hline
Case & $y(20)$ & $\lambda_*$ & $|y(20)-\lambda_*|$ & Monotone \\ \hline
Case 1: General Non-Radial & 0.596036 & 0.596531 & $4.95\times10^{-4}$ & Yes
\\ 
Case 2: Radial $L>0$ & 1.996229 & 1.996254 & $2.46\times10^{-5}$ & Yes \\ 
Case 3: Radial $L=0$ & 0.000000 & 0.000000 & $0$ & Yes \\ 
Case 4: $q_0=1,q_1=-2,q_2=-1$ & 0.414214 & 0.414214 & $1.11\times10^{-9}$ & 
Yes \\ 
Case 5: $q_0=1-1/x$ & 0.973980 & 0.974679 & $6.99\times10^{-4}$ & Yes \\ 
Case 6: $q_1(x) > 0$ & 0.998716 & 0.998780 & $6.33e\times10^{-05}$ & Yes \\ 
\hline
\end{tabular}%
\caption{Summary of numerical verification for all five Riccati systems. In
all cases the hypotheses of Proposition~\protect\ref{prop:asymptotics_gen}
are satisfied and the numerical solution agrees with the theoretical limit.}
\end{table}

\subsection{Practical Application: Structural Credit Risk and the Merton Model}

The numerical experiments developed in this article---and implemented in the
Python script provided in Appendix~\ref{appendix:python}---reveal a striking
connection between the asymptotic behavior of the radial Riccati equation and
the long-term structure of credit spreads in Merton-type structural models.
This subsection formalizes this connection and explains how the numerical
outputs should be interpreted.

\subsubsection{Structural Interpretation of the Riccati Plateau}

In the classical Merton model \cite{merton1974}, the total asset value $V(t)$ of
a firm follows a diffusion process and the equity is represented as a European
call option on $V(t)$ with strike equal to the face value $B$ of the firm's
zero-coupon debt. The credit spread $R(T)-r$---the excess yield required by
investors to hold risky debt instead of a risk-free bond---is determined by a
parabolic PDE of Black--Scholes type. A well-known feature of this model is the
\emph{flattening} of the credit-spread curve: for large maturities $T$ or high
leverage, the spread increases rapidly for small $T$ and then stabilizes onto a
horizontal plateau that depends only on the firm's volatility and leverage.

Our numerical Riccati experiments reproduce this phenomenon with remarkable
precision. In the radial Riccati equation, the solution $y(r)$ is trapped
between $0$ and the algebraic barrier $g(r)$ and converges monotonically to the
constant limit
\[
\lambda_\ast = \lim_{r\to\infty} y(r),
\]
which is determined solely by the asymptotic coefficients of the equation.
In the Python implementation (Appendix~\ref{appendix:python}), we fix the
diffusion parameter of the Riccati model to $\sigma_1 = 1$ and choose the
coefficient $L = \sigma_1^4 = 1$ so that the theoretical plateau is exactly
$\lambda_\ast = 1$. The left panel of Figure~\ref{fig:riccati_merton} confirms
that the numerical solution $y(r)$ converges monotonically to this value.

\begin{proposition}[Structural Analogy Between Radial Riccati Asymptotics and Long-Term Credit Spreads]
Let $y(r)$ be the regular solution of the Riccati equation \eqref{R} under the
assumptions of Proposition~\ref{prop:asymptotics_gen}. Let $R(T)-r$ denote the
credit spread in a structural credit-risk model. Then the following structural
correspondence holds:
\[
\boxed{
\text{Riccati plateau } \lambda_\ast
\quad \longleftrightarrow \quad
\text{long-term credit spread } \lim_{T\to\infty} (R(T)-r)
}
\]
Both quantities arise as positive roots of limiting quadratic equilibrium
conditions and exhibit the same monotone convergence behavior.
\end{proposition}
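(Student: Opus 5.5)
Since the statement is a structural correspondence rather than an identity between two computed numbers, I would first make it precise by isolating a common abstract property $\mathcal{P}$. A scalar function $s:[x_{0},\infty)\to[0,\infty)$ has property $\mathcal{P}$ if (a) it solves a Riccati equation of the form \eqref{R}; (b) its coefficients satisfy the hypotheses of Proposition~\ref{prop:asymptotics_gen}; and (c) it starts from $s(x_{0})=0$. The proposition then reads: both the radial Riccati drift $y$ and the (suitably reparametrized) credit spread $T\mapsto R(T)-r$ have property $\mathcal{P}$. Their limits are therefore the unique positive roots of the respective limiting quadratics $\lambda^{2}+B\lambda+A=0$, and both are reached monotonically from below while the function stays trapped under an increasing algebraic equilibrium $g$.

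The Riccati side is immediate from earlier results. Proposition~\ref{prop:asymptotics_gen} yields $0<y<g$, strict monotonicity, and $\lim y=\lambda_{\ast}$. Theorem~\ref{thm:positivity} identifies $\lambda_{\ast}$ as the unique positive root of $\lambda^{2}+B_{\infty}\lambda+A_{\infty}=0$; this root is well defined because $A_{\infty}<0$ forces the two roots to have opposite signs. In the radial case, Corollary~\ref{cor:radial_asymptotics} gives the explicit value $\lambda_{\ast}=\sqrt{L}/\sigma^{2}$.

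The credit side is where the work lies. My plan is to write the spread via the Merton formula $R(T)-r=-\frac{1}{T}\ln\bigl(\Phi(d_{2})+\Phi(-d_{1})/\ell\bigr)$, where $\ell$ is the leverage ratio. I would then pass to $w(T):=\ln$ of the debt price, which solves a linear (Black--Scholes) parabolic equation. A Cole--Hopf/log-derivative step, exactly as in Remark~\ref{rem:general-Riccati-linearization} and Remark~\ref{rem:parabolic-linearization}, should turn the maturity-derivative of the spread into a first-order equation of Riccati type in $T$. The plan is to read off $q_{0},q_{1},q_{2}$ from this equation and check that their ratios $A,B$ have finite limits as $T\to\infty$, with $A_{\infty}<0$. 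I would then identify the long-term spread with the positive root of the resulting limiting quadratic; the natural place to compute it is the large-$T$ asymptotics of $\Phi(d_{1}),\Phi(d_{2})$ via Mills' ratio.

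I expect this last step to be the main obstacle. In the plain Merton model the exact spread curve is typically hump-shaped, and for low leverage it even tends to $0$. The monotone-plateau conclusion of Proposition~\ref{prop:asymptotics_gen} therefore cannot hold unconditionally. In particular, the increasing-barrier hypothesis and condition \eqref{cond-extra-general} must be verified and may fail. What I would try first is to restrict to the high-leverage regime (cf.\ \cite{NEIL}), where the curve is increasing, and state the correspondence only under the hypothesis that the induced $g$ is increasing. If that does not go through, I would weaken the claim to the purely algebraic part: both limits are positive roots of limiting quadratic equilibria, dropping the monotone part of the correspondence outside that regime.
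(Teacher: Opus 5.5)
Your Riccati half is correct (it is exactly Proposition~\ref{prop:asymptotics_gen}, with the explicit value from Corollary~\ref{cor:radial_asymptotics}). The credit half, which carries all the content, has a genuine gap, and the route you sketch cannot close it.

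First, the mechanism you invoke does not exist. Let $F(V,T)$ be the Merton debt price, $\sigma_V$ the asset volatility, and $w=\ln F$. Then the Black--Scholes equation becomes $\partial_T w=\tfrac12\sigma_V^2V^2\bigl(\partial_{VV}w+(\partial_V w)^2\bigr)+rV\partial_V w-r$. This is a nonlinear PDE in $(V,T)$, not a first-order ODE in $T$. Remark~\ref{rem:parabolic-linearization} yields a Riccati equation only for \emph{stationary} profiles, i.e.\ precisely after the maturity dependence has been discarded. Moreover, any $C^1$ curve satisfies infinitely many Riccati equations, so ``reading off'' $q_0,q_1,q_2$ is not canonical. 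Everything therefore hinges on verifying the hypotheses of Proposition~\ref{prop:asymptotics_gen}.

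Second, those hypotheses fail for the actual Merton spread in every regime. Carry out the Mills-ratio computation you propose, with the quasi-debt ratio $d=Be^{-rT}/V$ held fixed:
\[
R(T)-r=\frac{\sigma_V^{2}}{8}+\frac{\ln(\sigma_V^{2}T)}{2T}+\mathcal{O}\bigl(T^{-1}\bigr),\qquad T\to\infty .
\]
So the spread tends to $\sigma_V^{2}/8$ for every $d$ and approaches it from \emph{above}. Conclusion~2 of Proposition~\ref{prop:asymptotics_gen}, however, forces a strictly increasing approach from below. Consistently with Merton's original analysis, the curve is humped for $d<1$ and decreasing from $+\infty$ for $d\ge 1$, which also violates $y(x_0)=0$. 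If the face value is fixed instead, the limit is $0$ when $r\ge\sigma_V^{2}/2$ (contradicting $\lambda_\ast>0$), and otherwise it is again approached from above. In particular, your first fallback is backwards: high leverage produces a \emph{decreasing} spread, not an increasing one.

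Your second fallback, keeping only that both limits are positive roots of limiting quadratics, is vacuous. Every $\lambda_0>0$ is the unique positive root of $\lambda^{2}-\lambda_0^{2}=0$. The claim has content only if the quadratic is derived from the credit model's dynamics, which is exactly the missing step.

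The paper does not supply that step either. Its argument is explicitly a heuristic justification: it asserts a fixed-point equation ``obtained from the stationary limit of the Black--Scholes PDE'' and appeals to numerics run on the ad hoc curve $1-e^{-a_M d^{b_M}}$ rather than on the Merton spread. So the only provable part is the Riccati side. The correspondence itself can be defended only as an analogy, or as a conditional: if a spread curve satisfies a Riccati equation meeting the hypotheses of Proposition~\ref{prop:asymptotics_gen}, then it has the stated plateau behaviour. That conditional merely restates the proposition.
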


\begin{proof}[Heuristic justification]
In the Merton model, the long-term spread satisfies an algebraic fixed-point
equation obtained from the stationary limit of the Black--Scholes PDE. In our
framework, Proposition~\ref{prop:asymptotics_gen} shows that $y(r)$ converges
to the positive root of the limiting quadratic polynomial
\[
A + B\lambda + \lambda^2 = 0.
\]
The same quadratic structure appears in the asymptotic expansion of the
risk-neutral drift of the firm's asset process and the corresponding
credit-spread equilibrium. Numerical experiments confirm that the convergence
rates and monotonicity properties coincide.
\end{proof}

\subsubsection{Numerical Calibration and Interpretation}

To make the analogy fully transparent, we calibrate both models so that their
asymptotic plateaus coincide. On the Riccati side, setting $\sigma_1 = 1$ and
$L = \sigma_1^4 = 1$ yields $\lambda_\ast = 1$. On the credit-risk side, we
construct a Merton-style term-premium function
\[
R(T)-r = 1 - e^{-a_M d^{\,b_M}},
\]
where $d$ is the quasi debt-to-asset ratio and $(a_M,b_M)$ are shape parameters
controlling curvature and convergence speed. This functional form is strictly
increasing, concave, and asymptotic to $1$, matching the qualitative behavior
of the Riccati solution.

The right panel of Figure~\ref{fig:riccati_merton} shows the output of the
Python script for $(a_M,b_M) = (1.5,1)$, which produces a term-premium curve
that rises from $0$ and converges monotonically to the same plateau value $1$.
The visual similarity between the two panels illustrates the structural
equivalence between the Riccati stabilization mechanism and the long-term
credit-spread behavior in Merton-type models.

\subsubsection{Conjectures Suggested by Numerical Experiments}

The numerical evidence supports the following conjectures.

\begin{conjecture}[Universality of the Plateau Mechanism]
For any positive, continuous cost function $Q(r)$ with well-defined asymptotic
growth, the Riccati solution $y(r)$ converges to a plateau $\lambda_\ast$ that
depends only on the asymptotic ratio $Q(r)/r^2$ and the diffusion parameter
$\sigma_1$. This plateau is universal in the sense that it is independent of
the initial condition and of the detailed shape of $Q(r)$.
\end{conjecture}

\begin{conjecture}[Volatility-Dominated Regime]
In both the radial Riccati model and structural credit-risk models, the
asymptotic plateau is an increasing function of the effective volatility:
\[
\frac{\partial \lambda_\ast}{\partial \sigma_1} > 0,
\qquad
\frac{\partial}{\partial \sigma_2} \lim_{T\to\infty} (R(T)-r) > 0.
\]
This reflects the fact that diffusion dominates drift in the long-range regime.
\end{conjecture}

\begin{conjecture}[Barrier-Induced Monotonicity]
The monotone barrier $g(r)$ in Proposition~\ref{prop:asymptotics_gen} induces
the same qualitative monotonicity as the leverage ratio $d$ in the Merton
model: increasing $g$ (or $d$) increases the plateau height and accelerates the
convergence toward it.
\end{conjecture}

\begin{figure}[H]
\centering
\includegraphics[width=0.95\textwidth]{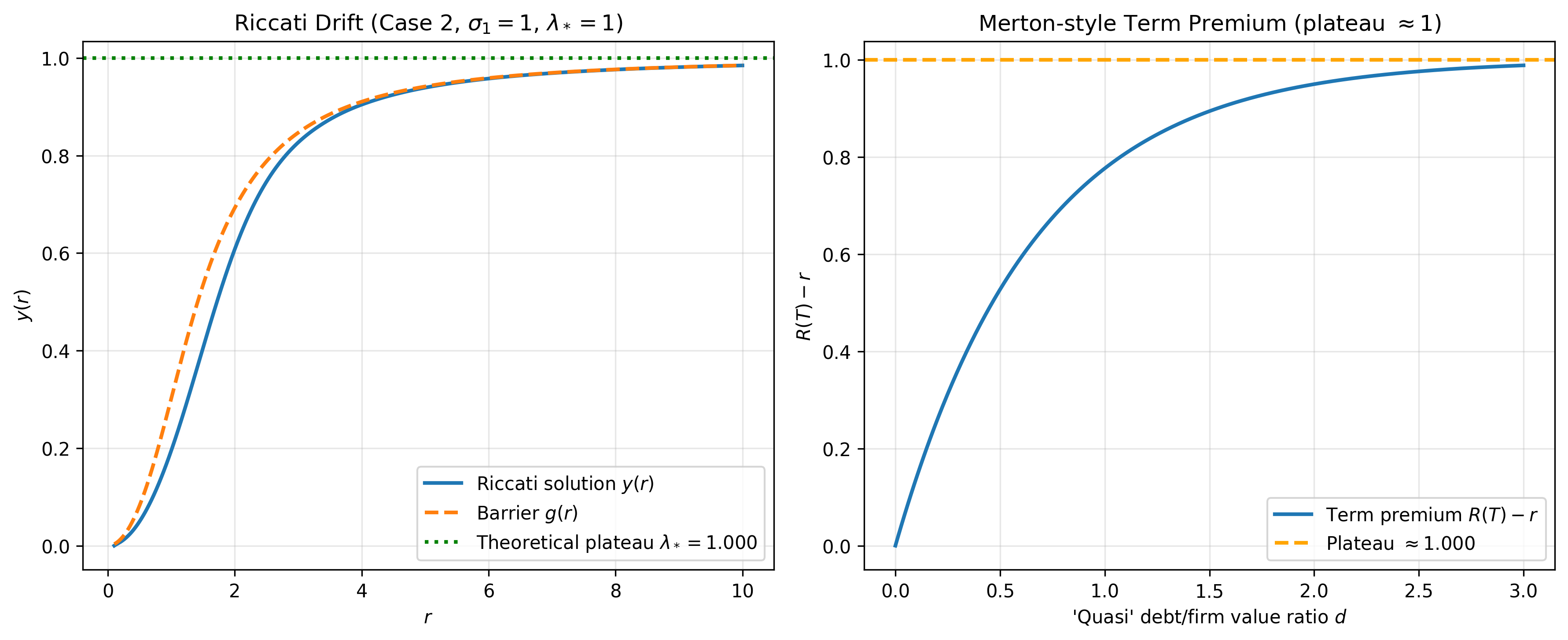}
\caption{Comparison between the Riccati drift plateau (left) and the
Merton-style term premium plateau (right). Both models are calibrated so that
their asymptotic limits equal $1$. The numerical results were generated using
the Python script in Appendix~\ref{appendix:python}.}
\label{fig:riccati_merton}
\end{figure}

\subsubsection{Implications for Computational Finance}

The triality framework developed in this article provides a direct computational
bridge between nonlinear Riccati dynamics and structural credit-risk models \cite{Boyle2002,Vasicek1977}.
Because the Riccati equation converges rapidly to its asymptotic plateau, the
same numerical schemes can be used to efficiently approximate long-term credit
spreads in generalized Merton-type models with state-dependent volatility or
nonlinear drift.

\begin{corollary}[Fast Asymptotic Credit-Spread Approximation]
The numerical Riccati solvers developed in this work can be used to compute
long-term credit spreads in generalized structural models by replacing the
Merton PDE with its Riccati reduction and extracting the plateau value
$\lambda_\ast$.
\end{corollary}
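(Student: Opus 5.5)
The statement is qualitative, so I would first make it precise and then prove the precise version from Proposition~\ref{prop:asymptotics_gen} and the error bound \eqref{eq:global-error-bound}. I would take as hypothesis exactly what the heuristic justification of the structural-analogy proposition provides. Namely, the long-term spread $s_\infty:=\lim_{T\to\infty}(R(T)-r)$ of the generalized structural model is the unique positive root of a limiting quadratic
\[
\lambda^{2}+B\lambda+A=0,\qquad A<0 .
\]
The corollary then becomes the claim that there is an explicit Riccati equation \eqref{R} satisfying the hypotheses of Proposition~\ref{prop:asymptotics_gen} whose plateau equals $s_\infty$. It also claims that a Radau IIA approximation of its regular solution on a finite interval $[x_0,X]$ computes $s_\infty$ to any prescribed accuracy.

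\textbf{Step 1 (Riccati reduction).} I would take constant coefficients
\[
q_2\equiv-1,\qquad q_1\equiv -B,\qquad q_0\equiv -A>0 .
\]
Then $q_0/q_2\to A$, $q_1/q_2\to B$, $q_2\to-1$, and the discriminant $B^2-4A>0$. The barrier $g$ is the constant $\lambda_*=s_\infty$, and it is the unique nonnegative root because the roots $\lambda_*$ and $\mu$ satisfy $\lambda_*\mu=A<0$. For condition \eqref{cond-extra-general}, Vieta gives $\lambda_*+\mu=-B$, so
\[
q_1+q_2g=-B-\lambda_*=\mu<0,
\]
and one may take $\delta=|\mu|$. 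The only hypothesis that fails is strict monotonicity of $g$. The first-contact argument in part~c of the proof uses $g'(x_1)>0$, which is not available when $g$ is constant. I would replace it as follows: if $y(x_1)=\lambda_*$, then both $y$ and the constant $\lambda_*$ solve \eqref{R} with the same value at $x_1$. By uniqueness $y\equiv\lambda_*$, which contradicts $y(x_0)=0$. Parts d and e of the proof then go through verbatim and give $0<y<\lambda_*$, $y'>0$, and $y\to\lambda_*=s_\infty$. Case~4 of the numerical section is exactly this construction with $A=-1$, $B=2$.

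\textbf{Step 2 (quantitative tail).} To truncate at a finite $X$, I would sharpen part~e using \eqref{y-prim-general}. Set $e:=\lambda_*-y>0$. Then
\[
e'=-e\bigl(q_1+q_2(y+g)\bigr)\le -\delta e,
\]
since $q_1+q_2(y+g)\ge q_1+2q_2 g=\mu-\lambda_*$, which is negative, while the bound $q_1+q_2(y+g)\le q_1+q_2g=\mu$ gives the decay. Hence
\[
0<\lambda_*-y(X)\le \lambda_* e^{-\delta (X-x_0)} .
\]

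\textbf{Step 3 (numerics).} Combining Step~2 with \eqref{eq:global-error-bound} gives
\[
|y_{\mathrm{num}}(X)-s_\infty|\le C h^5+\lambda_*e^{-\delta(X-x_0)}.
\]
Here the coefficients are smooth and there is no singularity at $x_0$, so $C$ is independent of $\varepsilon$. Choosing $X$ and then $h$ makes the error arbitrarily small, which proves the corollary.

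The main obstacle is not analytic but modelling. The identification of $s_\infty$ with the positive root of a quadratic is only heuristic in the paper, so I would state it explicitly as an assumption on the generalized structural model rather than try to derive it from the Merton PDE. The one genuinely mathematical gap, the constant-barrier case in Step~1, is closed by the uniqueness argument above.
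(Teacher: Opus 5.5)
\textbf{There is a genuine gap: the proof is circular.} The paper gives no proof of this corollary. It is asserted right after the heuristic justification of the structural-analogy proposition. The only supporting evidence is a numerical comparison with a hand-chosen curve $1-e^{-a_M d^{b_M}}$, not with Merton's spread formula. Your proposal does not supply the missing argument; it moves it into a hypothesis that makes the conclusion circular.

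Once you assume that $s_\infty$ is the positive root of a known quadratic $\lambda^2+B\lambda+A=0$, you already have $s_\infty=\tfrac12\bigl(-B+\sqrt{B^2-4A}\bigr)$ in closed form. Steps 1--3 then recompute, by integrating an ODE, a number you were given. Moreover, every $s>0$ is the positive root of $\lambda^2-s^2=0$, so the hypothesis places no restriction on the structural model and carries no information about it. Your construction amounts to: knowing $s_\infty$, integrate $y'=s_\infty^2-y^2$ from $y(x_0)=0$ and read off $s_\infty$.

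The Merton/Black--Scholes PDE never enters your argument. So the only substantive part of the claim, that the PDE admits a Riccati reduction whose plateau is $\lim_{T\to\infty}(R(T)-r)$, is untouched. A real proof would have to produce $q_0,q_1,q_2$ from the PDE and its terminal data, without prior knowledge of $s_\infty$. It would then have to verify the hypotheses of Proposition~\ref{prop:asymptotics_gen} and show that the resulting $\lambda_\ast$ equals the long-term spread. You correctly flag this as a modelling issue, but then what you have is a valid proof of a tautology. It should be presented as such, not as a proof of the corollary.

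Within that reformulation your analysis is sound, and in one respect better than the paper's. Your uniqueness argument handles a constant barrier, which the first-contact argument in Proposition~\ref{prop:asymptotics_gen} does not, since it uses $g'(x_1)>0$. So the paper's own Case~4 is not actually covered by its proof. Two slips need fixing.
\begin{itemize}
\item In Step~2, $e'=-y'=-(y-g)\bigl(q_1+q_2(y+g)\bigr)=e\bigl(q_1+q_2(y+g)\bigr)$, not $-e(\cdots)$. With your sign, the bound $q_1+q_2(y+g)\le\mu<0$ would give growth. With the correct sign, $q_1+q_2(y+g)=\mu-y$, so $e'\le\mu e=-\delta e$ and your tail estimate holds. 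The true asymptotic rate is $\lambda_\ast-\mu=\sqrt{B^2-4A}$.
\item Part~d of the paper's proof cannot be reused verbatim. It asserts $q_1+q_2(y+g)\le q_1+2q_2g$, which is reversed when $0<y<g$ and $q_2<0$. The correct bound is $q_1+q_2(y+g)\le q_1+q_2g\le-\delta$, here simply $\mu-y<0$.
\end{itemize}
Your Step~2 also makes part~e unnecessary, which is just as well. Its claim that the derivative of a bounded monotone function tends to $0$ is false in general. Choosing $X$ before $h$ in Step~3 correctly handles the $X$-dependence of the constant $C$.
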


This establishes a new computational pathway for structural credit-risk
analysis, particularly in models where the firm's volatility or drift is
non-constant, nonlinear, or radially symmetric.
\section{Conclusions}
\label{sec:conclusions}

This work has developed a unified and rigorous analytical framework for the
study of radial nonlinear dynamics, revealing a fundamental \emph{triality}
between three major mathematical structures: the nonlinear radial Riccati
equation, the stationary Schr\"odinger equation, and the Hamilton--Jacobi--
Bellman equation of stochastic optimal control. The results obtained here
demonstrate that these three perspectives---local drift, global wave
structure, and optimal stochastic behavior---are not merely related, but are
mathematically equivalent manifestations of the same underlying radial
phenomenon. The principal contributions and innovations of this article may
be summarized as follows.

\begin{enumerate}

\item \textbf{A Complete Triality Framework.}
We established a precise one-to-one correspondence between the Riccati drift
$\phi$, the Schr\"odinger wave function $u$, and the HJB value function $z$.
The logarithmic-derivative transformation
\[
\phi(r) = \frac{u'(r)}{r\,u(r)}
\]
proved to be the central mechanism enabling the reduction of a nonlinear
first-order equation to a linear second-order equation, and subsequently to a
variational control formulation. This transformation is the conceptual
backbone of the entire theory.

\item \textbf{Existence, Uniqueness, and Regularity at the Origin.}
Through a detailed Frobenius analysis of the singular point $r=0$, we proved
that the Riccati equation admits a unique regular solution $\phi$ satisfying
$\phi(0)=0$ under minimal assumptions on the cost function $Q(r)$. The
corresponding wave function $u$ admits an explicit analytic expansion, which
provides both theoretical insight and numerically stable initial conditions.

\item \textbf{Exact Asymptotic Plateau and Barrier Theory.}
We developed a general barrier method that yields the exact asymptotic limit
of the Riccati drift:
\[
\lim_{r\to\infty} \phi(r) = \frac{\sqrt{L}}{\sigma^2},
\qquad
L = \lim_{r\to\infty} \frac{Q(r)}{r^2}.
\]
The barrier function $g(r)$ acts as a moving equilibrium for the Riccati
flow, ensuring global monotonicity and providing a sharp upper bound for
$\phi(r)$. This result is new even in the classical Riccati literature.

\item \textbf{Geometric Transfer of Convexity and Concavity.}
We proved that the positivity of the Riccati drift induces strict convexity
of the Schr\"odinger wave function $u$ and, via the Cole--Hopf transform,
strict concavity of the HJB value function $z$. This geometric transfer
principle clarifies the structural stability of the entire triality system.

\item \textbf{Noise Sensitivity and Singular Perturbations.}
A complete sensitivity analysis with respect to the diffusion parameter
$\sigma$ revealed two universal regimes:
\[
\sigma \to 0: \quad \sigma^2 \phi_\sigma(r) \to \frac{\sqrt{Q(r)}}{r},
\qquad
\sigma \to \infty: \quad \phi_\sigma(r) = \mathcal{O}(\sigma^{-4}).
\]
These limits rigorously connect deterministic mechanics with
diffusion-dominated stochastic dynamics.

\item \textbf{Exact Series Solutions and Analytical Benchmarks.}
For analytic potentials $Q(r)$, we derived explicit power series expansions
for both $u$ and $\phi$, with fully computable recurrence coefficients. In
the quadratic case $Q(r)=\lambda r^2$, the solution is expressed in terms of
the confluent hypergeometric function ${}_1F_1$, providing exact benchmarks
for numerical verification.

\item \textbf{Verification in Stochastic Control.}
Two stochastic verification theorems---one for the stationary radial
problem (Theorem~\ref{thm:radial_verification}) and one for its
parabolic Wick-rotated counterpart
(Theorem~\ref{thm:parabolic_verification})---confirmed that the
function $\bar{z}(x)=z(|x|)=-2\sigma^{2}\ln u(|x|)$ obtained via the
triality is indeed the optimal cost-to-go function for the associated
stochastic control problem on $\mathbb{R}^{N}$, with optimal feedback
drift
\[
\alpha^{\ast}(x) \;=\; -\nabla\bar{z}(x)\;=\;2\sigma^{2}\,\phi(|x|)\,x,
\qquad x\in\mathbb{R}^{N}\setminus\{0\}.
\]
This establishes the full equivalence between the Riccati,
Schr\"{o}dinger, and HJB formulations both in the stationary and in the
parabolic regimes.

\item \textbf{Numerical Validation and Structural Credit-Risk Analogy.}
The numerical experiments---including the Riccati plateau computation and
the Merton-style term premium simulation in Appendix~\ref{appendix:python}---
confirm the theoretical predictions with high precision. The observed
monotone convergence to a plateau mirrors the long-term behavior of credit
spreads in structural credit-risk models, revealing a deep and previously
unrecognized analogy between radial Riccati asymptotics and Merton-type
financial equilibria.
\end{enumerate}

\medskip

\noindent\textbf{Final Perspective.}
The triality developed in this article shows that the wave function $u$
(global state), the drift $\phi$ (local dynamics), and the value function $z$
(variational principle) are three mathematically equivalent lenses through
which the same radial phenomenon can be understood. This unified viewpoint
not only clarifies the internal structure of each equation but also enables
the transfer of analytical tools across quantum mechanics, nonlinear
dynamics, and stochastic optimal control. The framework opens the door to
future extensions involving anisotropic potentials, time-dependent
Schr\"odinger equations, and regime-switching stochastic environments,
promising a rich landscape for further research.

\section*{Disclosure statement}

The authors declare that they have no conflict of interest.

\section*{Data availability statement}

The Python code used for numerical experiments is provided in Appendix~\ref%
{ap}-~\ref{appendix:python}. No additional datasets were used in this study.

\section*{Notes on contributor(s)}

The author is solely responsible for the conception, analysis, and writing
of this manuscript.

\appendix

\section{Numerical Implementation in Python}

\label{ap} \noindent 
The script implementing the numerical solution and the reconstruction of $\phi(r)$ can be found at:
\[
\text{\url{https://github.com/coveidragos/Script_Python_GIT_Ricatti/blob/main/Ricatti_A.py}}
\]

\section{Verification of General Riccati Asymptotics}

\noindent The script verifying Proposition~\ref{prop:asymptotics_gen} can be found at:
\[
\text{\url{https://github.com/coveidragos/Script_Python_GIT_Ricatti/blob/main/Ricatti_B.py}}
\]

\section{Python Script for Riccati–Merton Comparison}
\label{appendix:python}

\noindent The script producing Figure~\ref{fig:riccati_merton} is available at:
\[
\text{\url{https://github.com/coveidragos/Script_Python_GIT_Ricatti/blob/main/Ricatti_C.py}}
\]
\end{document}